\documentclass[reqno]{amsart}

\usepackage[english]{babel}
\usepackage{amsmath,amssymb,amsthm}
\usepackage{hyperref}
\usepackage[centering]{geometry}
\usepackage{xcolor}

\renewcommand{\epsilon}{\varepsilon}            

\renewcommand{\H}{\ensuremath{\mathbb{H}}}

\newcommand{\br}[1]{\left(#1\right)}
\newcommand{\lang}[1]{\langle #1 \rangle}
\usepackage{nicematrix,tikz}
\newcommand{\trh}{\frac{1}{\sqrt{2}}}
\usepackage{float}

\usepackage{slashed}

\newtheorem{theorem}{Theorem}[section]   
\newtheorem*{theorem*}{Theorem}          
\newtheorem{lemma}[theorem]{Lemma}
\newtheorem{proposition}[theorem]{Proposition}

\theoremstyle{definition}

\newtheorem{corollary}[theorem]{Corollary}
\newtheorem{example}[theorem]{Example}

\newtheorem{remark}{Remark}[section]

\numberwithin{equation}{section}

\title[Classification of hypersurfaces with constant principal curvatures in  $\mathbb{H}^m\times \mathbb{H}^n$]
{Classification of hypersurfaces with constant principal curvatures in  $\mathbb{H}^m\times \mathbb{H}^n$} 
\thanks{}

\author{Haizhong Li}
\author{Renhao Tan}
\author{Zeke Yao}

\subjclass[2020]{Primary 53C42; Secondary 53B25, 53C40}
\keywords{Constant principal curvatures, constant product angle 
function, isoparametric hypersurface}

\date{}

\begin{document}
	
\begin{abstract}
In this paper, we study the hypersurfaces in $\mathbb{H}^m\times\mathbb{H}^n$ ($m\geq3, n\geq2$) with constant principal curvatures. Let $g$ be the number of distinct constant principal curvatures. First, we classify all such hypersurfaces with $g\leq2$. 
Then, we prove that a hypersurface with constant principal curvatures and constant product angle function has $g\le3$, and we obtain a complete classification of these hypersurfaces. As a corollary, we classify the isoparametric hypersurfaces in $\mathbb{H}^m\times \mathbb{H}^n$ ($m\geq3, n\geq2$) with constant principal curvatures. 
\end{abstract}

\maketitle

\section{Introduction}\label{sect:1}

Let $M$ be an orientable hypersurface of a Riemannian manifold $(\bar{M}, \bar g)$.
$M$ is called an isoparametric hypersurface of $\bar{M}$ if there exists an isoparametric function $F:\bar{M} \rightarrow \mathbb{R}$ such that $M=F^{-1}(r)$,
for some regular value $r$ of $F$. Here $F$ is called an isoparametric function
if the gradient and the Laplacian of $F$ satisfy
$$
\|\nabla F\|^{2}=f_1(F), \quad \Delta F=f_2(F),
$$
where $f_1, f_2: \mathbb{R} \rightarrow \mathbb{R}$ are smooth functions.
In addition to the above definition,
there is another equivalent characterization for isoparametric hypersurfaces. A hypersurface of a Riemannian manifold
is isoparametric if and only if its locally defined parallel hypersurfaces
have constant mean curvature.
	
For hypersurfaces in real space forms, the condition of having constant principal curvatures is equivalent to being isoparametric. The full classification of isoparametric hypersurfaces in Euclidean spaces was obtained by Segre \cite{Seg}, and in real hyperbolic spaces by Cartan \cite{Car}. The situation is more involved in spheres. Cartan classified hypersurfaces with $g \in \{1,2,3\}$ constant principal curvatures. Subsequently, Hsiang and Lawson \cite{HL2} gave a complete classification of homogeneous hypersurfaces in spheres, while Takagi-Takahashi \cite{TT} computed their principal curvatures. 
Later, M\"{u}nzner \cite{Muz} proved that for any isoparametric hypersurface in a sphere, the number of distinct principal curvatures necessarily satisfies $g \in \{1,2,3,4,6\}$. 
Isoparametric hypersurfaces in spheres have been studied extensively. We refer to the excellent surveys in \cite{CR15,Tho} and the references therein. 
	
When the ambient space has nonconstant sectional curvature, the isoparametricity of
a hypersurface is generally not equivalent to the constancy of the principal curvatures. 
The first counterexamples were constructed by Wang \cite{Wang1} in complex projective spaces. 
In complex projective space, Ge, Tang and Yan \cite{GTY} proved that
isoparametric hypersurfaces in $\mathbb{C}P^{2n}$ are homogeneous,  but this is no longer valid for isoparametric hypersurfaces in $\mathbb{C}P^{2n+1}$. Dom\'{\i}nguez-V\'{a}zquez \cite{D} and, subsequently, Dom\'{\i}nguez-V\'{a}zquez and Kollross \cite{DK} classified isoparametric hypersurfaces in complex projective spaces.  D\'{\i}az-Ramos, Dom\'{\i}nguez-V\'{a}zquez and 
Sanmart\'{\i}n-L\'{o}pez \cite{DDS} classified isoparametric hypersurfaces
in complex hyperbolic spaces. For real hypersurfaces in non-flat complex space form with constant principal curvatures, Kimura \cite{Kimura86} classified the Hopf hypersurfaces with constant principal curvatures, in particular, the number
of distinct principal curvatures is $g\in\{2,3,5\}$. In the 
complex hyperbolic space $\mathbb{C}H^m$ ($m\geq2$), Berndt \cite{B} classified the
Hopf hypersurfaces with constant principal curvatures, and in this case the number of
distinct principal curvatures is $g\in\{2,3\}$. 
Later, D\'{\i}az-Ramos and Dom\'{\i}nguez-V\'{a}zquez \cite{DD1} treated the case of real hypersurfaces in $\mathbb{C}P^n$ and $\mathbb{C}H^n$ with constant principal curvatures for which the Reeb vector field $\xi$ has precisely two nontrivial projections onto the principal curvature spaces. 
Furthermore, in other canonical irreducible Riemannian symmetric spaces and $3$-dimensional homogeneous Riemannian manifolds, there have been some interesting results on construction and classification of isoparametric hypersurfaces (cf. \cite{D-M,DS24} etc.), 
on hypersurfaces with constant principal curvatures (cf. \cite{Berndt91,DD1,D-M,LTY25,LTY26,Ta2,Wang2} etc.). 
	
Now we focus on the study of hypersurfaces in the product spaces. 
Urbano \cite{Ur} classified the isoparametric hypersurfaces and the hypersurfaces with at most two distinct constant principal curvatures in $\mathbb{S}^2\times\mathbb{S}^2$.  Later, Gao, Ma and Yao \cite{GMY24} classified the isoparametric hypersurfaces and the hypersurfaces with at most two distinct constant principal curvatures in $\mathbb{H}^2\times\mathbb{H}^2$. After that, Gao, Ma and Yao \cite{GMY24-2} classified the isoparametric hypersurfaces in other product spaces of two  $2$-dimensional space forms. 
Recently, de Lima and Pipoli \cite{dP24} classified the isoparametric hypersurfaces 
in $\mathbb{H}^n\times \mathbb{R}$ and $\mathbb{S}^n\times \mathbb{R}$. 
Tan, Xie and Yan \cite{TXY25} classified the isoparametric hypersurfaces in $\mathbb{H}^n\times \mathbb{R}^m$ and $\mathbb{S}^n\times \mathbb{R}^m$. 
For further results on isoparametric hypersurfaces, the hypersurfaces with constant 
principal curvatures and other canonical hypersurfaces in the product spaces, we refer to \cite{Toj26,CS19,DP,LVWY1,LVWY2,MT11,M25,Toj13} and the references therein. 
	
In this paper, we study the hypersurfaces of $\mathbb{H}^m\times \mathbb{H}^n$ with constant principal curvatures. Since isoparametric hypersurfaces and hypersurfaces with constant principal curvatures in $\mathbb{H}^2\times \mathbb{H}^2$ have been studied in \cite{GMY24}, and $\mathbb{H}^m\times \mathbb{H}^1$ is locally isometric to $\mathbb{H}^m\times \mathbb{R}$, here we focus on the ambient space $\mathbb{H}^m\times \mathbb{H}^n$ ($m\geq3, n\geq2$). 
Before stating our main results, we first recall that there is a natural  product structure $P$ on $\mathbb{H}^m\times\mathbb{H}^n$ defined by $P(v_1,v_2):=(v_1,-v_2)$ for any tangent vector fields $v_1\in T\mathbb{H}^m$ and $v_2\in T\mathbb{H}^n$. Then, for an orientable hypersurface $M$ of $\mathbb{H}^m\times\mathbb{H}^n$ with a unit normal vector field $N$,  we can introduce an important function $C$ defined by $C:=\langle PN, N\rangle$. Here, $\langle\cdot,\cdot\rangle$ denotes the standard product metric on $\mathbb{H}^m\times\mathbb{H}^n$. Let $V:=PN-CN$ be the tangential part of $PN$. The geometry of hypersurfaces in $\mathbb{H}^m\times\mathbb{H}^n$ is closely related to this function $C$. Hereafter, for the sake of brevity, we shall call $C$ the {\it product angle function} of $M$. 
	
Now, as our first main result, by restricting the number of distinct principal curvatures to be at most two, we obtain the following result. 
	
\begin{theorem}\label{thm:1.2}
Let $M$ be a connected oriented hypersurface of $\mathbb{H}^m\times \mathbb{H}^n$ ($m\geq3, n\geq2$) with at most two distinct constant principal curvatures. Then, up to isometries of $\mathbb{H}^m\times \mathbb{H}^n$, one of the following three cases occurs: 
\begin{itemize}
\item[(1)] $M$ is an open part of $\Sigma\times \mathbb{H}^n$, where $\Sigma$ is a totally umbilical hypersurface of $\mathbb{H}^m$ (see Example \ref{exam:1}); or
\item[(2)] $M$ is an open part of $\mathbb{H}^m\times \tilde{\Sigma}$, where $\tilde{\Sigma}$ is a totally umbilical hypersurface of $\mathbb{H}^n$ (see Example \ref{exam:2}); or
\item[(3)] $M$ is an open part of $M_{1,-1}^{0}$ (see Example \ref{exam:3}). 
\end{itemize}
\end{theorem}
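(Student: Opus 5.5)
The plan is to work entirely with the structure equations of a hypersurface $M$ in $\mathbb{H}^m\times\mathbb{H}^n$ expressed through the product structure $P$. Write $PX=\phi X+\langle X,V\rangle N$ for tangent $X$, where $\phi$ is the tangential part of $P$. Then $\phi^2 X=X-\langle X,V\rangle V$, $\phi V=-CV$ and $|V|^2=1-C^2$.

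Since $\bar\nabla P=0$, we get the basic derivative identities
\[
\nabla_X V=-\phi AX+CAX,\qquad X(C)=-2\langle AX,V\rangle .
\]
The curvature tensor of $\mathbb{H}^m\times\mathbb{H}^n$ is
\[
\bar R(X,Y)Z=-\tfrac12\big(\langle Y,Z\rangle X-\langle X,Z\rangle Y+\langle PY,Z\rangle PX-\langle PX,Z\rangle PY\big).
\]
Hence the Codazzi equation reads
\[
(\nabla_XA)Y-(\nabla_YA)X=-\tfrac12\big(\langle Y,V\rangle\phi X-\langle X,V\rangle\phi Y\big),
\]
and the Gauss equation carries the analogous $P$-terms.

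Assume the principal curvatures are constant, and first treat the case where $C$ is locally constant. Then $AV=0$. If $V\equiv0$, then $C=\pm1$, so $N$ is tangent to one factor, and the Codazzi equation together with $\nabla P=0$ forces $M$ to split as $\Sigma\times\mathbb{H}^n$ or $\mathbb{H}^m\times\tilde\Sigma$. The Codazzi equation then makes $\Sigma$ (resp. $\tilde\Sigma$) have constant principal curvatures with at most two values, one of which is $0$ from the flat factor. The Cartan classification in $\mathbb{H}^m$, together with the fact that the $\mathbb{H}^n$-directions already account for the eigenvalue $0$, reduces us to $\Sigma$ totally umbilical. This gives cases (1) and (2).

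If $V\neq0$ and $C$ is constant, then $0$ is a principal curvature with $V$ in its eigenspace. I would use $\nabla_XV=(C-\phi)AX$ and the Codazzi equation applied to $V$ to show that $A$ commutes with $\phi$ on $V^\perp$. Tracing the Gauss equation against the eigenspace decomposition then pins down the two eigenvalues. I expect this to force $C=0$ and eigenvalues $0$ and $\lambda$ with $\lambda^2$ determined, which should be exactly the model $M^0_{1,-1}$ of Example~\ref{exam:3}. The identification with the example would follow from a rigidity argument comparing the shape operator and $\phi$ with those of the example.

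The main obstacle is showing that $C$ must be constant when $g\le2$. Suppose $\nabla C\neq0$ on an open set. Take the eigen-decomposition $TM=T_\lambda\oplus T_\mu$ and write $V=V_\lambda+V_\mu$. I would differentiate the identity $A^2-(\lambda+\mu)A+\lambda\mu=0$ and combine it with the Codazzi equation, evaluated on $X\in T_\lambda$ and $Y\in T_\mu$. Because $m\ge3$ and $n\ge2$, the eigenspaces (or $\phi$-eigenspaces $\ker(\phi\mp1)$) have dimension at least $2$. This lets me choose $X\perp V$ inside an eigenspace, and the Codazzi terms $\langle X,V\rangle\phi Y$ then give algebraic relations. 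These relations should force $\phi$ to preserve each $T_\lambda$, $T_\mu$ and should force $V$ to lie in a single eigenspace. The formula $X(C)=-2\langle AX,V\rangle$ then yields $\nabla C=-2\lambda V$. Finally I would take the divergence of $V$, computed as $\operatorname{tr}((C-\phi)A)$, which is expressible via the multiplicities and $C$. Comparing the two expressions for $\mathrm{Hess}\,C$ should give a polynomial identity in $C$ with constant coefficients. Since $C$ is nonconstant, all coefficients must vanish, which yields a contradiction. This is the step where dimension counting ($m\ge3$) is essential, and I would try the case $V$ split across both eigenspaces first.
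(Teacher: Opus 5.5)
Your architecture matches the paper's: exclude $V$ having components in both eigenspaces, show $V$ is principal with eigenspace $\mathbb{R}V$, deduce $C=0$ and $A\phi=\phi A$, then classify. The gap is that both decisive steps are only asserted ("should force"). The tools you list cannot deliver them: Codazzi relations whose left side vanishes for three vectors in one eigenspace, plus $\nabla V$ and $\nabla C$.

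\textbf{The split case.} Write $\bar V=\cos\theta X_1+\sin\theta Y_1$. Your relations give $\langle\phi Y,Z\rangle=0$ for $Y\in T_\lambda\ominus\mathbb{R}X_1$, $Z\in T_\lambda$, and symmetrically for $\mu$. Carried to the end, they yield the following:
\begin{itemize}
\item $\phi$ preserves $\mathrm{span}\{X_1,Y_1\}$;
\item $\phi\bar X=\pm\bar X$ for $\bar X=\sin\theta X_1-\cos\theta Y_1$;
\item $P$ \emph{interchanges} $T_\lambda\ominus\mathbb{R}X_1$ and $T_\mu\ominus\mathbb{R}Y_1$;
\item taking traces, $m-n=\pm1$.
\end{itemize}
So for $|m-n|=1$ (e.g.\ $m=3$, $n=2$) the pointwise algebra is consistent with $V$ split and with $\phi$ not preserving $T_\lambda,T_\mu$. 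Your claim that these relations force $\phi$ to preserve $T_\lambda,T_\mu$ and put $V$ in one eigenspace is false, and dimension counting does not rescue it.

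The missing ingredient is the derivative of $\phi$ itself,
\[
(\nabla_X\phi)Y=\langle Y,V\rangle AX+\langle AX,Y\rangle V,
\]
which is the full content of $\bar\nabla P=0$. The paper uses it in the frame $\bar V,\bar X,E_{2k+1}\in T_\lambda,E_{2k+2}=PE_{2k+1}$ to compute connection coefficients. Several Codazzi components then give $(C-1)(\lambda-\mu)(1+C-2\lambda\mu)=0$. Hence $C=2\lambda\mu-1$ is constant and $AV=-\tfrac12\nabla C=0$, contradicting the split. Your $\mathrm{Hess}\,C$ step comes after this point, so it does not address it; you also never say what the second expression for $\mathrm{Hess}\,C$ is.

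\textbf{The principal case.} The same problem arises when $AV=\lambda V$; this includes your constant-$C$ case with $\lambda=0$. Codazzi applied to $V$ gives $\phi(T_\lambda\ominus\mathbb{R}V)\subset T_\mu$, the opposite of commuting. For $\lambda=0$, the remaining pointwise relations only force $C=0$. They still allow, e.g., $m=n$ with $\phi$ interchanging $T_0\ominus\mathbb{R}V$ and $T_\mu$.

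What rules out $T_\lambda\neq\mathbb{R}V$ is again $\nabla\phi$. For $\tilde X\in T_\lambda\ominus\mathbb{R}V$ it gives $\langle\nabla_{\bar V}(\phi\tilde X),\tilde X\rangle=\tfrac12\bar V\langle\phi\tilde X,\tilde X\rangle=0$. This contradicts the Codazzi identity $(\lambda-\mu)\langle\nabla_{\bar V}(\phi\tilde X),\tilde X\rangle=\tfrac12\sqrt{1-C^2}$.

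Once $T_\lambda=\mathbb{R}V$, you need neither a Gauss trace nor a Hessian. Apply Codazzi to $(X,V,X)$ for unit $X$ in each of the $\pm1$-eigenspaces of $\phi|_{V^\perp}$; these have dimensions $m-1$ and $n-1$, so both are nonempty. This gives $(\lambda-\mu)\mu=-\tfrac12(1+C)=-\tfrac12(1-C)$, so $C=0$ pointwise without assuming $C$ constant, and then $A\phi=\phi A$.

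\textbf{The final identification.} Cite the classification of hypersurfaces with $A\phi=\phi A$ (Tojeiro), giving $M^C_{1,\pm1}$. Then select $M^0_{1,-1}$, since $M^0_{1,1}$ has three distinct principal curvatures. A "rigidity comparison" of $A$ and $\phi$ would need an isometry intertwining $A,\phi,V$, not just pointwise agreement of matrices.
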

	
\begin{remark}\label{rem:1.1}
In Theorem \ref{thm:1.2}, the assumption of constant principal curvatures cannot be omitted. In fact, we can construct some hypersurfaces with two nonconstant principal curvatures (see Example \ref{ex:two-pc-horospheres}).
\end{remark}

Next, as our second main result, under the additional assumption that the product angle function is constant, we obtain the following classification result.
	
\begin{theorem}\label{thm:1.1}
Let $M$ be a connected oriented hypersurface of $\mathbb{H}^m\times \mathbb{H}^n$ ($m\geq3, n\geq2$) with constant principal curvatures and constant product angle function $C$. Then, $g\in\{1,2,3\}$, and up to isometries of $\mathbb{H}^m\times \mathbb{H}^n$, one of the following five cases occurs: 
\begin{itemize}
\item[(1)] $M$ is an open part of $\Sigma\times \mathbb{H}^n$, where $\Sigma$ is a hypersurface of $\mathbb{H}^m$ with constant principal curvatures (see Example \ref{exam:1}); or
\item[(2)] $M$ is an open part of $\mathbb{H}^m\times \tilde{\Sigma}$, where $\tilde{\Sigma}$ is a hypersurface of $\mathbb{H}^n$ with constant principal curvatures (see Example \ref{exam:2}); or
\item[(3)] $M$ is an open part of $M_{1,-1}^{C}$ for some $C\in(-1,1)$ (see Example \ref{exam:3}); or
\item[(4)] $M$ is an open part of $M_{1,1}^{C}$ for some $C\in (-1,1)$ (see Example \ref{exam:4}); or
\item[(5)] $m=n$, and $M$ is an open part of $M_\tau$ for some $\tau<-1$ (see Example \ref{exam:5}). 
\end{itemize}
\end{theorem}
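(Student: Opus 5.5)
We will work with the standard structure equations of a hypersurface $M$ in $\mathbb{H}^m\times\mathbb{H}^n$ written in terms of the product structure. Since $P$ is parallel, differentiating $PN=V+CN$ gives
\[
\nabla_X V = -CAX + PAX - \langle PAX, N\rangle N \text{ (tangential part)},\qquad X(C) = -2\langle AX, V\rangle ,
\]
where $A$ is the shape operator. The Gauss and Codazzi equations involve $P$ through the curvature tensor
\[
\bar R = -\tfrac14\big(\langle\cdot,\cdot\rangle\wedge\langle\cdot,\cdot\rangle + \langle P\cdot,\cdot\rangle\wedge\langle P\cdot,\cdot\rangle\big)
\]
(up to normalization). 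Writing $PX = FX + \langle X,V\rangle N$ with $F$ the induced symmetric endomorphism, we have $F^2 = I - V\otimes V$, $FV = -CV$ and $|V|^2 = 1-C^2$.

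\textbf{Case $C=\pm1$.} Here $V=0$ and $N$ is tangent to one factor. The relation $\nabla V=0$ gives $FA=CA$, so $A$ commutes with $F$. The eigenspaces of $F$, which are the tangent distributions of the two factors, are parallel, so $M$ splits locally as $\Sigma\times\mathbb{H}^n$ or $\mathbb{H}^m\times\tilde\Sigma$. The factor $\Sigma$ (or $\tilde\Sigma$) then has constant principal curvatures in a real space form. By Cartan's result it has $g\le 2$, and together with the zero eigenvalue of the flat factor this gives $g\le3$. These are cases (1) and (2).

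\textbf{Case $|C|<1$.} Now $V\neq0$. Constancy of $C$ forces $AV=0$, so $0$ is a principal curvature with eigenvector $V$. Differentiating $AV=0$ gives $(\nabla_XA)V = -A\nabla_XV = -A(PAX)^\top + CA^2X$. Combined with Codazzi, this yields algebraic relations between $A$ and $F$ on $V^\perp$. The key structural step is to show that $F$ preserves the principal distributions on $V^\perp$, i.e. $AF=FA$ there. Since the principal curvatures are constant, I would use the Codazzi equation paired with $V$ and its symmetry to get a quadratic relation of the form
\[
A F A = \lambda A^2 + \mu A + \nu F + \dots
\]
on $V^\perp$, with constant coefficients depending on $C$. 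The eigenvalues of $F|_{V^\perp}$ are $\pm1$, with multiplicities $m-1$ and $n-1$, together with $C$ on $V$. Decomposing $V^\perp$ into the $\pm1$ eigenspaces of $F$ and comparing with the eigenspaces of $A$ should show that each nonzero principal curvature $\lambda$ has its eigenspace contained in one $F$-eigenspace. Then $\lambda$ satisfies a quadratic equation determined by the Gauss/Codazzi equations, for example $\lambda^2 \pm(\cdots)\lambda + \tfrac{1\mp C}{4}=0$. Counting the possible roots gives $g\le3$.

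\textbf{Identification.} Next we identify the distributions: the eigenspaces of $A$ intersected with $F=\pm1$ are integrable and totally geodesic or umbilical in the factors. Computing the leaves and the parallel structure should recover the examples:
\begin{itemize}
\item $M^{C}_{1,-1}$ and $M^{C}_{1,1}$, which are products of horosphere-type data, with principal curvatures $0$ and $\tfrac{\sqrt{1\pm C}}{2}$ or similar;
\item $M_\tau$ when the two nonzero principal curvatures occur with equal multiplicities, forcing $m=n$.
\end{itemize}
Uniqueness up to isometry follows by integrating the frame equations, since all the data are constant.

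\textbf{Main obstacle.} I expect the hardest step to be proving $AF=FA$ on $V^\perp$, i.e. ruling out principal eigenspaces that are mixed with respect to the product decomposition. I would first try tracing the Codazzi identities against $V$ and using $F^2=I$ on $V^\perp$ to obtain an identity like $\operatorname{tr}\big((AF-FA)^2\big)=0$. If that fails, I would instead analyse the eigenvalue equation for $A$ restricted to each $A$-eigenspace, projected onto $F=\pm1$, using the constancy of all the coefficients.
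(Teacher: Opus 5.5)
There is a genuine gap, at exactly the step you flagged as the main obstacle. The claim that $F$ (the paper's $T$) preserves the principal distributions on $V^\perp$, i.e.\ $AF=FA$, is false in general. Take the tube $M_\tau\subset\mathbb{H}^n\times\mathbb{H}^n$ of Example~\ref{exam:5}. It has $C=0$ and constant principal curvatures $0$, $\sqrt{\frac{\tau-1}{2(\tau+1)}}$, $\sqrt{\frac{\tau+1}{2(\tau-1)}}$, and $P$ maps the second eigenspace onto the third. So $\langle FX,X\rangle=0$ for every principal $X\in V^\perp$, and no $A$-eigenspace lies in an $F$-eigenspace. Hence no identity like $\operatorname{tr}\big((AF-FA)^2\big)=0$ can be derived from the structure equations. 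If your plan succeeded, it would exclude case (5), which contradicts your own identification step. Indeed, $AF=FA$ with $|C|<1$ is exactly the hypothesis of Theorem~\ref{thm:Toj26}, which with constant principal curvatures yields only $M^C_{1,\pm1}$ (Proposition~\ref{prop:ATTA}).

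Even granting $AF=FA$, ``counting roots'' does not give $g\le3$. Lemma~\ref{lemma:4.1}(1) only yields the four candidates $\pm\sqrt{(1\pm C)/2}$ on $V^\perp$. Excluding mixed signs inside one $F$-eigenspace, and pinning down the hypersurface, needs Tojeiro's structure theorem plus the horosphere rigidity of Remark~\ref{rem:3.1}; ``integrating the frame equations'' does not by itself do this.

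What is missing is second-order information and a genuine case split at $C=0$. Codazzi paired with $V$ only fixes the diagonal blocks of $F$ relative to the $A$-eigenspaces: $(\lambda^2-\frac12(1-C^2))\langle FX,Y\rangle=C\lambda^2\langle X,Y\rangle$ for $X,Y\in V_\lambda$. This is vacuous when $C=0$ and $\lambda=\pm1/\sqrt2$. The off-diagonal blocks are tied to connection coefficients via $(\lambda\mu-\frac12(1-C^2))\langle FX,Y\rangle=(\lambda-\mu)\langle\nabla_VX,Y\rangle$.

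The paper brings in the Gauss equation. Computing $K(X,V)$ intrinsically gives $\langle\nabla_VX,\nabla_XV\rangle=0$, hence $\sum_{\mu_i\neq\lambda}\frac{\lambda\mu_i-\frac12(1-C^2)}{\lambda-\mu_i}\langle FX,e_i\rangle^2=0$ (Lemma~\ref{lemma:car-seperate}). Now pick an eigenvalue $\tilde\lambda$ outside $\{0,\pm\sqrt{(1\pm C)/2}\}$ with no other such eigenvalue strictly between $\tilde\lambda$ and $\frac{1-C^2}{2\tilde\lambda}$. Then every coefficient is negative except at the partner value $\frac{1-C^2}{2\tilde\lambda}$. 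So $F$ can only mix $V_{\tilde\lambda}$ with the partner eigenspace, and the diagonal formula then forces $C=0$.

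Thus for $C\neq0$ one does get $AF=FA$, after also excluding $0\in\sigma(A|_{V^\perp})$ by an invariance argument. For $C=0$, however, the exchange $F(V_\lambda)=V_{1/(2\lambda)}$ really occurs. Reaching it requires first bounding $\tilde g\le2$, with $\pm1/\sqrt2$ handled by the Cartan identities from $K(X,e_i)$. In that branch the trace of $F|_{V^\perp}$ forces $m=n$. The hypersurface $M_\tau$ is then identified by showing that the focal set at distance $\sqrt2\operatorname{arccoth}(\sqrt2\lambda)$ is an $m$-dimensional submanifold whose tangent space $P$ maps onto its normal space. Such a submanifold is totally geodesic, hence the diagonal $\{(p,p)\}$, and $M$ is a tube around it.

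Your treatment of $C=\pm1$ is fine and matches the paper.
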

	
Recall that de Lima and Pipoli recently proved that every connected isoparametric hypersurface of $\mathbb{H}^m\times \mathbb{H}^n$ ($m\geq3, n\geq2$) has 
constant product angle function (see Theorem 1 of \cite{DP}). Then, as a direct application of Theorem \ref{thm:1.1}, we have the following result. 
	
\begin{corollary}
Let $M$ be a connected oriented isoparametric hypersurface of $\mathbb{H}^m\times \mathbb{H}^n$ ($m\geq3, n\geq2$) with constant principal curvatures. Then, up to isometries of $\mathbb{H}^m\times \mathbb{H}^n$, one of the following five cases occurs: 
\begin{itemize}
\item[(1)] $M$ is an open part of $\Sigma\times \mathbb{H}^n$, where $\Sigma$ is a hypersurface of $\mathbb{H}^m$ with constant principal curvatures (see Example \ref{exam:1}); or
\item[(2)] $M$ is an open part of $\mathbb{H}^m\times \tilde{\Sigma}$, where $\tilde{\Sigma}$ is a hypersurface of $\mathbb{H}^n$ with constant principal curvatures (see Example \ref{exam:2}); or
\item[(3)] $M$ is an open part of $M_{1,-1}^{C}$ for some $C\in (-1,1)$ (see Example \ref{exam:3}); or
\item[(4)] $M$ is an open part of $M_{1,1}^{C}$ for some $C\in (-1,1)$ (see Example \ref{exam:4}); or
\item[(5)] $m=n$, and $M$ is an open part of $M_\tau$ for some $\tau<-1$ (see Example \ref{exam:5}). 
\end{itemize}  
\end{corollary}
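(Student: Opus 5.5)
The corollary should follow by combining Theorem \ref{thm:1.1} with the result of de Lima and Pipoli \cite{DP}; no new geometric computation is needed. Let $M$ be a connected oriented isoparametric hypersurface of $\mathbb{H}^m\times\mathbb{H}^n$ ($m\geq3$, $n\geq2$) with constant principal curvatures.

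The first step is to apply Theorem 1 of \cite{DP}. It says that every connected isoparametric hypersurface of $\mathbb{H}^m\times\mathbb{H}^n$ in this dimension range has constant product angle function $C=\langle PN,N\rangle$. The only point to check is that our normalization matches theirs. Their angle function is built from the same product structure $P(v_1,v_2)=(v_1,-v_2)$ and the unit normal $N$. A change of orientation $N\mapsto -N$ leaves $C$ unchanged, since $C$ is quadratic in $N$. So the constancy statement does not depend on the chosen orientation, and it applies to our $M$.

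Next, $M$ is connected and oriented, has constant principal curvatures by hypothesis, and has constant $C$ by the previous step. These are exactly the hypotheses of Theorem \ref{thm:1.1}. Invoking that theorem gives $g\in\{1,2,3\}$, and up to isometries of $\mathbb{H}^m\times\mathbb{H}^n$, $M$ is an open part of one of the five families listed: $\Sigma\times\mathbb{H}^n$, $\mathbb{H}^m\times\tilde\Sigma$, $M^C_{1,-1}$, $M^C_{1,1}$, or (when $m=n$) $M_\tau$. That list is precisely the conclusion of the corollary.

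The only potential obstacle is the compatibility check in the first step. One must confirm that the hypotheses of \cite{DP} cover our setting (connected isoparametric hypersurfaces, with no further assumptions, in exactly the range $m\geq3$, $n\geq2$) and that their angle function agrees with $C$ up to a constant affine change, so that constancy transfers. Once this is verified, the rest is a direct citation.
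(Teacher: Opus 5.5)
Your proposal is correct and is the paper's own argument. The paper cites Theorem 1 of \cite{DP} to get that $M$ has constant product angle function $C$, and then applies Theorem \ref{thm:1.1} directly to obtain the five cases. Your extra check that $C$ is unchanged under $N\mapsto -N$ is harmless but not needed.
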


\begin{remark}
Since $\mathbb{H}^m\times \mathbb{H}^1$ is locally isometric to $\mathbb{H}^m\times \mathbb{R}$, the classifications in $\mathbb{H}^m\times \mathbb{H}^1$ of hypersurfaces with at most two distinct constant principal curvatures and of those with constant principal curvatures and constant product angle function follow from \cite{CS19}, while the classification of isoparametric hypersurfaces follows from \cite{dP24}. The corresponding results in $\mathbb{H}^2\times\mathbb{H}^2$ were obtained 
by \cite{GMY24}.
\end{remark}

The paper is organized as follows. In Section \ref{sect:2}, we collect some basic properties of $\mathbb{H}^m\times \mathbb{H}^n$ ($m\geq3, n\geq2$) and some preliminaries of the geometry of hypersurfaces of $\mathbb{H}^m\times \mathbb{H}^n$. 
In Section \ref{sect:3}, we introduce the canonical hypersurfaces, and we also construct  hypersurfaces of $\mathbb{H}^m\times \mathbb{H}^n$ with two nonconstant principal curvatures. In Section \ref{sect:4}, we establish Cartan's formulas for hypersurfaces with constant principal curvatures and constant product angle function. Section \ref{sect:5} is dedicated to the proof of Theorem \ref{thm:1.2}. Section \ref{sect:6} is dedicated to the proof of Theorem \ref{thm:1.1}. In Section \ref{sect:7}, we classify connected oriented isoparametric hypersurfaces of $\mathbb{H}^m\times \mathbb{H}^n$ ($m\geq3, n\geq2$) with constant product angle function $C=0$. In Section \ref{sect:8}, we classify connected oriented isoparametric hypersurfaces of $\mathbb{S}^m\times \mathbb{H}^n$ ($m,n\geq2$ and $m+n\geq5$). 
	
\textbf{Acknowledgments:}
H. Li and R. Tan were supported by NSFC Grant No. 12471047. Z. Yao was supported by NSFC Grant No. 12401061.

\section{Preliminaries}\label{sect:2}
\subsection{The geometric structure on $\mathbb{H}^{m}\times \mathbb{H}^{n}$}\label{sect:2.1}
Let $\mathbb{R}_{1}^{k+1}$ be the $(k+1)$-dimensional Minkowski space with the Lorentzian metric $\langle\cdot,\cdot\rangle$.
The hyperbolic space of curvature $-1$ can be defined as the following subset of $\mathbb{R}_{1}^{k+1}$:
$$
\mathbb{H}^{k}=\left\{\left(x_{1}, x_{2},\ldots, x_{k+1}\right) \in \mathbb{R}_{1}^{k+1} \mid - x_{1}^{2}+x_{2}^{2} + \cdots + x_{k+1}^{2} = -1 , x_{1} > 0\right\}.
$$
	
Throughout the paper we will consider $\mathbb{H}^m \times  \mathbb{H}^n$ ($m\geq3, n\geq2$) as embedded naturally in $\mathbb{R}_{1}^{m+1} \times \mathbb{R}_{1}^{n+1}$, with the induced Riemannian product metric which we also denote by $\langle\cdot,\cdot\rangle$.
	
The isometry group of $\mathbb{H}^{m}\times \mathbb{H}^{n}$ is 
$$
{\rm Iso}\left(\mathbb{H}^{m}\times \mathbb{H}^{n}\right)=\left\{\left(\begin{array}{cc}
A_{1} & 0 \\
0 & A_{2}
\end{array}\right) \mid A_{1}\in \mathrm{O}^{+}(1,m), A_{2} \in \mathrm{O}^{+}(1,n)\right\},\ \ m\neq n,
$$
	
$$
{\rm Iso}\left(\mathbb{H}^{n}\times \mathbb{H}^{n}\right)=\left\{\left(\begin{array}{cc}
A_{1} & 0 \\ 
0 & A_{2}
\end{array}\right),\left(\begin{array}{cc}
0 & B_1 \\
B_{2} & 0
\end{array}\right) \mid A_{1}, A_{2}, B_1, B_{2} \in \mathrm{O}^{+}(1,n)\right\},
$$
where $\mathrm{O}^{+}(1,m), \mathrm{O}^{+}(1,n)$ denote the orthochronous Lorentz groups.
	
The product structure $P$ on $\mathbb{H}^m\times\mathbb{H}^n$ is defined by $P: T(\mathbb{H}^m\times\mathbb{H}^n) \rightarrow T(\mathbb{H}^m\times\mathbb{H}^n)$ such that
$$
P(v_1,v_2)=(v_1,-v_2), \quad \forall\, v_1\in T\mathbb{H}^m, v_2\in T\mathbb{H}^n.
$$
Obviously, we have $P^2=\mathrm{id}$ and 
\begin{equation}\label{eqn:2.1}
\langle PX, Y\rangle=\langle X,PY\rangle, \quad \forall\,X,Y\in T(\mathbb{H}^m\times\mathbb{H}^n).
\end{equation}
Moreover, $\bar{\nabla} P=0$, where $\bar{\nabla}$ is the Levi-Civita connection on $\mathbb{H}^m\times\mathbb{H}^n$.
	
The curvature tensor $\bar{R}$ of $\mathbb{H}^m\times\mathbb{H}^n$
with the Riemannian product metric is given by
\begin{align*}
\langle\bar{R}(X,Y)Z,W\rangle=-\frac{1}{2}&\Big\{\langle Y,Z\rangle\langle X,W\rangle
-\langle X,Z\rangle\langle Y,W\rangle\\
&+\langle PY,Z\rangle\langle PX,W\rangle-\langle PX,Z\rangle\langle PY,W\rangle\Big\},
\end{align*}
where $X, Y, Z, W\in T(\mathbb{H}^m\times\mathbb{H}^n)$.

\subsection{Hypersurfaces of $\mathbb{H}^{m}\times \mathbb{H}^{n}$}\label{sect:2.2}
	
Let $M$ be an orientable hypersurface of $\mathbb{H}^m \times \mathbb{H}^n$ with $N$ a unit normal vector field. The induced metric on $M$ is still denoted by $\langle\cdot,\cdot\rangle$. Then, with respect to the product structure $P$, the product angle function $C: M\rightarrow\mathbb{R}$ and a vector field $V$ tangent to $M$ are defined by
\begin{align*}
C&:=\langle PN,N\rangle,\\
V&:=PN-CN.
\end{align*}
It is clear that $-1\leq C\leq 1$ and $\|V\|^2:=\langle V,V\rangle=1-C^2$.
	
Let $T: TM\rightarrow TM$ be the tangential component of the restriction of $P$ to $M$, 
i.e.,
$$
TX=P X-\langle PX,N\rangle N=P X-\langle X,V\rangle N
$$
for any tangent vector field $X$ of $M$. 
Let $V^\perp\subset TM$ denote the orthogonal complement distribution of $V$, 
then $V^\perp$ is $T$-invariant and $T|_{V^\perp}$ is an orthogonal involution, 
i.e., $(T|_{V^\perp})^2 = \operatorname{id_{V^\perp}}$. 
	
Let $\nabla$ be the Levi-Civita connection of the induced metric on $M$. 
The Gauss and Weingarten formulas are
\begin{align*}
\bar{\nabla}_X Y=\nabla_X Y+\langle AX,Y\rangle N, \quad \bar{\nabla}_XN=-AX,
\end{align*}
where $A$ is the shape operator of $M$.
	
Now, the Gauss and Codazzi equations of $M$ are given by
\begin{equation}\label{eqn:2.2}
\begin{aligned}
R(X, Y)Z=&-\frac{1}{2}\Big(\langle Y,Z\rangle X-\langle X,Z\rangle Y+\langle TY,Z\rangle TX
-\langle TX,Z\rangle TY\Big)\\
&+\langle AY,Z\rangle AX-\langle AX,Z\rangle AY,
\end{aligned}
\end{equation}
\begin{equation}\label{eqn:2.3}
(\nabla_XA)Y-(\nabla_Y A)X=-\frac{1}{2}\Big(\langle X,V\rangle TY-\langle Y,V\rangle TX\Big),
\end{equation}
where $X, Y, Z\in TM$, and $R$ denotes the curvature tensor of $M$ with respect to the metric $\langle\cdot,\cdot\rangle $.
	
Notice that the product structure $P$ of $\mathbb{H}^m \times \mathbb{H}^n$ satisfies \eqref{eqn:2.1} and $\bar{\nabla}P=0$. Then, we can obtain the following lemma which describes some properties of the function $C$ and the vector field $V$.
	
\begin{lemma}\label{lemma:2.1}
Let $M$ be an orientable hypersurface of $\mathbb{H}^m \times \mathbb{H}^n$ and $A$ the shape operator associated to the unit normal field $N$. Then the gradient of $C$ and the covariant derivative of $V$ are given by
\begin{equation}\label{eqn:2.4}
\nabla C=-2 AV, \quad \nabla_{X} V=C AX -T A X, \ \ \forall\ X\in TM.
\end{equation}
\end{lemma}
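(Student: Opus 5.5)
The plan is to differentiate the defining expressions for $C$ and $V$ along $M$, using only the Gauss and Weingarten formulas, the parallelism $\bar{\nabla}P=0$, and the symmetry \eqref{eqn:2.1} of $P$.

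\emph{Gradient of $C$.} Fix $X\in TM$. Since $C=\langle PN,N\rangle$ and $\bar\nabla P=0$,
\[
X(C)=\langle P\bar\nabla_XN,N\rangle+\langle PN,\bar\nabla_XN\rangle=2\langle PN,\bar\nabla_XN\rangle ,
\]
where the second equality uses \eqref{eqn:2.1}. By Weingarten, $\bar\nabla_XN=-AX$, which is tangent to $M$. Hence only the tangential part $V$ of $PN=V+CN$ contributes, and
\[
X(C)=-2\langle V,AX\rangle=-2\langle AV,X\rangle
\]
by the symmetry of $A$. Therefore $\nabla C=-2AV$.

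\emph{Covariant derivative of $V$.} Differentiate $V=PN-CN$ in the ambient space:
\[
\bar\nabla_XV=P\bar\nabla_XN-X(C)N-C\bar\nabla_XN=-PAX-X(C)N+CAX .
\]
Decompose $PAX$ using the definition of $T$, namely $PAX=TAX+\langle AX,V\rangle N$. Taking the tangential component of $\bar\nabla_XV$ and using the Gauss formula, $\nabla_XV$ equals this tangential part, which gives
\[
\nabla_XV=CAX-TAX .
\]

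As a consistency check, the normal component must equal $\langle AX,V\rangle$ by the Gauss formula. It is $-\langle AX,V\rangle-X(C)=-\langle AX,V\rangle+2\langle AV,X\rangle=\langle AX,V\rangle$, which agrees.

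There is no real obstacle here. The only care needed is in the bookkeeping of tangential versus normal components, and in using \eqref{eqn:2.1} correctly to combine the two terms in $X(C)$.
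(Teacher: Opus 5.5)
Your proposal is correct and follows essentially the same route as the paper: differentiate $C=\langle PN,N\rangle$ using $\bar\nabla P=0$, the symmetry \eqref{eqn:2.1} and Weingarten, then differentiate $V=PN-CN$ and split $PAX=TAX+\langle AX,V\rangle N$. The only difference is bookkeeping. You read off the tangential part directly and check the normal part against the Gauss formula, whereas the paper subtracts $\langle AX,V\rangle N$ explicitly and watches the normal terms cancel.
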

	
\begin{proof}
By the definition of the product angle function $C=\langle PN,N\rangle$, we have
$$
\begin{aligned}
XC&=X\langle PN,N\rangle=\langle P\bar{\nabla}_X N,N\rangle+\langle PN,\bar{\nabla}_X N\rangle\\
&=-2\langle AX,V\rangle=-2\langle AV,X\rangle, \ \ \forall\ X\in TM.
\end{aligned}
$$
It follows that $\nabla C=-2 AV$.
		
Then, using $V=PN-CN$ and $\nabla C=-2 AV$, we obtain
$$
\begin{aligned}
\nabla_X V&=\bar{\nabla}_X V-\langle AX,V\rangle N=\bar{\nabla}_X (PN-CN)-\langle AX,V\rangle N\\
&=P\bar{\nabla}_X N-(XC)N-C\bar{\nabla}_X N-\langle AX,V\rangle N\\
&=-PAX+CAX+\langle AX,V\rangle N=C AX -T A X,
\end{aligned}
$$
for any tangent vector field $X$ of $M$.
\end{proof}
	
\begin{corollary}\label{coro:2.2}
When the product angle function $C$ is constant on $M$ and $C\neq \pm1$, we have
\begin{enumerate}
\item $V$ is a principal vector field of $M$, and it satisfies $AV=0$ and $\nabla_V V = 0$.
\item $V^\perp$ is $A$-invariant, i.e., $A(V^\perp) \subset V^\perp$. 
\end{enumerate}
\end{corollary}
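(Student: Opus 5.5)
Everything follows from Lemma \ref{lemma:2.1}, using that $C$ is constant with $C\neq\pm1$, so $V$ is a nowhere-vanishing field with $\|V\|^2=1-C^2>0$.

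\textbf{Step 1: $AV=0$.} Since $C$ is constant, $\nabla C=0$, and \eqref{eqn:2.4} gives $-2AV=0$. Hence $AV=0$, so $V$ is a principal vector field with principal curvature $0$ at every point.

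\textbf{Step 2: $\nabla_V V=0$.} Put $X=V$ in the second identity of \eqref{eqn:2.4}:
$$\nabla_V V=CAV-TAV=0,$$
because $AV=0$.

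\textbf{Step 3: $A(V^\perp)\subset V^\perp$.} Since $A$ is self-adjoint, for $X\in V^\perp$ we have
$$\langle AX,V\rangle=\langle X,AV\rangle=0.$$
So $AX\in V^\perp$, which is the invariance of $V^\perp$.

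No step is a real obstacle. The only point to watch is that $C\neq\pm1$ is used exactly to guarantee $V\neq0$. Without it, "$V$ is a principal vector field" and the distribution $V^\perp$ would be meaningless, since $V^\perp$ would be all of $TM$.
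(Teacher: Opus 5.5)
Your proof is correct and matches the paper, which gives no separate proof but presents this corollary as an immediate consequence of Lemma \ref{lemma:2.1}: $AV=0$ from $\nabla C=-2AV=0$, then $\nabla_V V=CAV-TAV=0$, and $A(V^\perp)\subset V^\perp$ from the symmetry of $A$. Your observation that $C\neq\pm1$ is used only to ensure $\|V\|^2=1-C^2>0$ is also accurate.
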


\section{Examples}\label{sect:3}
	
In this section, we introduce some canonical examples of hypersurfaces in $\mathbb{H}^m \times \mathbb{H}^n$, $m \geq 3$, $n \geq 2$.

\begin{lemma}[\cite{DP}]\label{lemma:3.5}
Let $M$ be a hypersurface of $\mathbb{H}^m\times\mathbb{H}^n$ ($m\geq3, n\geq2$) with $C^2=1$. Then, up to isometries of $\mathbb{H}^m \times \mathbb{H}^n$, $M$ is either an open part of hypersurface $\Sigma\times\mathbb{H}^n$, or an open part of hypersurface $\mathbb{H}^m \times \tilde{\Sigma}$, where $\Sigma$ and $\tilde \Sigma$ are hypersurfaces of $\H^m$ and $\H^n$, respectively.
\end{lemma}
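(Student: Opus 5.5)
If $C^2=1$ on $M$, then $\|V\|^2=1-C^2=0$, so $V\equiv 0$ and $PN=CN$. Since $M$ is connected and $C$ is continuous with values in $\{1,-1\}$, $C$ is constant. By symmetry of roles we treat $C=1$; the case $C=-1$ is identical with the factors swapped.

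When $C=1$ we have $PN=N$, so $N=(N_1,0)$ is tangent to the $\mathbb{H}^m$ factor at every point. Hence $TM$ contains the full tangent space of the $\mathbb{H}^n$ factor, $\{0\}\times T\mathbb{H}^n$. Moreover, on $TM$ the map $T$ coincides with $P$ (because $\langle X,V\rangle=0$), and $TM$ splits orthogonally into $\mathcal D_1=\{X\in TM: PX=X\}$ of dimension $m-1$ and $\mathcal D_2=\{X: PX=-X\}=\{0\}\times T\mathbb{H}^n$.

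Next we show that $\mathcal D_2$ is totally geodesic in $M$ and that $A$ vanishes on it. Lemma~\ref{lemma:2.1} with $V=0$ gives $CAX-TAX=0$, that is, $TAX=AX$. Hence $A$ takes values in $\mathcal D_1$. Since $A$ is self-adjoint, $\langle AX,Y\rangle=\langle X,AY\rangle=0$ for $X\in\mathcal D_2$ and $Y\in TM$, so $A|_{\mathcal D_2}=0$. Both distributions are parallel in $M$. Indeed, $\bar\nabla P=0$ gives $(\nabla_X T)Y=0$ for the tangential part when $V=0$, since the normal terms drop out. So the eigendistributions of $T$ are parallel, and by de Rham $M$ is locally a Riemannian product of their integral leaves.

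To conclude extrinsically, fix a point $p=(p_1,p_2)$. The leaf of $\mathcal D_2$ through $p$ is an $n$-dimensional submanifold tangent everywhere to the second factor, so it is an open subset of $\{p_1\}\times\mathbb{H}^n$. The leaf of $\mathcal D_1$ is tangent to the first factor, so it is an open piece $\Sigma\times\{p_2\}$, where $\Sigma$ is a hypersurface of $\mathbb{H}^m$. Alternatively and more directly, the projection $\pi_2|_M$ is a submersion whose fibers are tangent to $\mathcal D_1$, and the slice $M\cap(\mathbb{H}^m\times\{q\})$ is a hypersurface in $\mathbb{H}^m$ independent of $q$. The independence follows because $N$ has no $\mathbb{H}^n$-component: moving along $\mathcal D_2$ leaves $N_1$ unchanged, since $\bar\nabla_X N=-AX=0$ for $X\in\mathcal D_2$. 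Hence $M$ is an open part of $\Sigma\times\mathbb{H}^n$; for $C=-1$ it is an open part of $\mathbb{H}^m\times\tilde\Sigma$.

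The main obstacle is the last step, passing from the local splitting to the global open-part statement. The cleanest way is to write $M$ locally as a level set $\{f(x_1)=0\}$. Since $N=(N_1,0)$, any local defining function $F(x_1,x_2)$ has $\partial_{x_2}F=0$ along $M$, so $M$ is locally a union of full slices in the $\mathbb{H}^n$ direction. Connectedness then gives the conclusion.
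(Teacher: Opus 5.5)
The paper gives no proof of this lemma; it is quoted from \cite{DP}, so there is no in-paper argument to compare against. Judged on its own, your proof is correct as a local statement, but most of it is not needed.

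Everything follows from your first two observations. $C=1$ gives $V=0$ and $PN=N$, hence $N=(N_1,0)$ and $\{0\}\times T\mathbb{H}^n\subset TM$ everywhere. So the kernel of $d\pi_1|_{TM}$ is exactly $\{0\}\times T\mathbb{H}^n$, and $\pi_1|_M$ has constant rank $m-1$. The constant rank theorem then gives, around each point, a neighbourhood $U\subset M$ such that $\Sigma:=\pi_1(U)$ is an embedded hypersurface of $\mathbb{H}^m$. Since $U\subset\Sigma\times\mathbb{H}^n$ and both have dimension $m+n-1$, $U$ is open in $\Sigma\times\mathbb{H}^n$. For $C=-1$, use $\pi_2$ instead.

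The identities $TA=A$ and $A|_{\mathcal D_2}=0$, the parallelism of $T$, and the local de Rham splitting are therefore superfluous for the lemma itself. What they buy is the extrinsic information the paper records right after it: the principal curvatures of $\Sigma\times\mathbb{H}^n$ are those of $\Sigma$ together with $0$ of multiplicity $n$.

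In your ``alternatively'' paragraph, the reason nearby slices $M\cap(\mathbb{H}^m\times\{q\})$ agree is that the $\mathbb{H}^n$-directions are tangent to $M$, so integral curves of vector fields tangent to $M$ stay in $M$. The constancy of $N_1$ along $\mathcal D_2$ is a consequence of this, not its cause.

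One caution about your last sentence. $M$ is locally a union of open pieces of slices, not full slices, and connectedness does not upgrade the local product structure to a global one for a merely smooth $M$. For example, on a coordinate cube in $\mathbb{H}^m$ let $\Sigma_1=\{x_m=0\}$ and $\Sigma_2=\{x_m=f(x_1)\}$, with $f$ smooth, $f=0$ for $x_1\le 0$ and $f>0$ for $x_1>0$. Let $H_\pm=\{q\in\mathbb{H}^n:\pm q_2>0\}$ be the two half-spaces bounded by a totally geodesic hyperplane. Then
\[
\big((\Sigma_1\cap\{x_1<0\})\times\mathbb{H}^n\big)\cup(\Sigma_1\times H_+)\cup(\Sigma_2\times H_-)
\]
is a connected embedded hypersurface with $C\equiv1$. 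Its projection to $\mathbb{H}^m$ branches along $\{x_1=0\}$, so it is not an open part of $\Sigma\times\mathbb{H}^n$ for a single hypersurface $\Sigma$. The lemma has to be read locally, which is how the paper uses it, and that local statement is exactly what your argument establishes.
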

	
It is obvious that the hypersurface $\Sigma\times\mathbb{H}^n$ has constant product angle function $C=1$, and its principal curvatures are $\kappa^{\Sigma}_1,...,\kappa^{\Sigma}_{m-1}$, $0$ (multiplicity $n$), where $\kappa^{\Sigma}_1 , ... , \kappa^{\Sigma}_{m-1}$ are principal curvatures of $\Sigma \hookrightarrow \mathbb{H}^m$. On the other hand, from the expression of geodesics in  $\mathbb{H}^m\times\mathbb{H}^n$, we know that the parallel hypersurface of $\Sigma\times\mathbb{H}^n$ at distance $l$ is $\Gamma\times \mathbb{H}^n$, where $\Gamma$ is a parallel hypersurface of $\Sigma$ at distance $l$ in $\mathbb{H}^m$. 
Together with Cartan's classification in hyperbolic space, these observations yield the following lemma.
	
\begin{lemma}\label{lemma:3.2}
For any hypersurface $\Sigma\times\mathbb{H}^n$ of $\mathbb{H}^m\times\mathbb{H}^n$,
the following statements are equivalent:
\begin{enumerate}
\item[(1)] $\Sigma\times\mathbb{H}^n$ has constant principal curvatures; 
\item[(2)] $\Sigma\times\mathbb{H}^n$ is an isoparametric hypersurface; 
\item[(3)] $\Sigma$ is a hypersurface in $\mathbb{H}^m$ with constant principal curvatures. 
\end{enumerate}
\end{lemma}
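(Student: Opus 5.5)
The plan is to prove the cycle $(1)\Rightarrow(3)\Rightarrow(2)\Rightarrow(1)$, using only the product structure of $\Sigma\times\mathbb{H}^n$ and the classical theory in $\mathbb{H}^m$.

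\emph{Step 1: the shape operator.} Let $\nu$ be a unit normal of $\Sigma$ in $\mathbb{H}^m$. Then $N=(\nu,0)$ is a unit normal of $\Sigma\times\mathbb{H}^n$. Since $\bar\nabla$ is the product connection, for tangent vectors $(X_1,X_2)$ we get
$$\bar\nabla_{(X_1,X_2)}N=(\nabla^{\mathbb{H}^m}_{X_1}\nu,0)=(-A^\Sigma X_1,0).$$
Hence $A=A^\Sigma\oplus 0$, and the principal curvatures at $(p,q)$ are $\kappa^\Sigma_1(p),\dots,\kappa^\Sigma_{m-1}(p)$ together with $0$ of multiplicity $n$. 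This justifies the sentence preceding the lemma.

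\emph{Step 2: $(1)\Leftrightarrow(3)$.} If the principal curvatures of $\Sigma\times\mathbb{H}^n$ are constant, then the characteristic polynomial of $A$ is constant. It equals $t^n\det(t-A^\Sigma)$, so $\det(t-A^\Sigma)$ is constant on $\Sigma$. Thus the principal curvatures of $\Sigma$, counted with multiplicity, are constant, which is (3). The converse is immediate from Step 1.

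\emph{Step 3: $(3)\Rightarrow(2)$.} Geodesics of the product in the direction $(\nu,0)$ are $t\mapsto(\gamma(t),q)$, with $\gamma$ the geodesic of $\mathbb{H}^m$ normal to $\Sigma$. So the parallel hypersurface at distance $l$ is $\Gamma_l\times\mathbb{H}^n$, where $\Gamma_l$ is the parallel hypersurface of $\Sigma$. By Step 1 its mean curvature equals that of $\Gamma_l$ in $\mathbb{H}^m$, up to the normalizing factor $(m-1)/(m+n-1)$. For a hypersurface of $\mathbb{H}^m$ with constant principal curvatures, Cartan's classification (or directly the standard formula $\kappa_i(l)=\frac{\kappa_i+\tanh(\cdot)}{1+\kappa_i\tanh(\cdot)}$-type expressions, with the appropriate sign convention) shows the parallel hypersurfaces also have constant principal curvatures. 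Hence all parallel hypersurfaces of $\Sigma\times\mathbb{H}^n$ have constant mean curvature, so it is isoparametric.

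\emph{Step 4: $(2)\Rightarrow(3)$.} If $\Sigma\times\mathbb{H}^n$ is isoparametric, then by Step 3 each $\Gamma_l$ has constant mean curvature in $\mathbb{H}^m$, so $\Sigma$ is isoparametric in $\mathbb{H}^m$. In a real space form, isoparametric is equivalent to constant principal curvatures (Cartan), which gives (3).

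\emph{Main obstacle.} The only delicate point is the parallel-hypersurface identification in Step 3. Locally, for small $l$ and away from focal points, the normal exponential map of the product splits as $(\exp^\Sigma_l,\mathrm{id})$, and this is exactly what the argument needs; beyond that, everything reduces to the classical space-form results.
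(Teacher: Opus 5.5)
Your proposal is correct and follows the same route as the paper. The paper justifies the lemma by exactly your two observations: the principal curvatures of $\Sigma\times\mathbb{H}^n$ are those of $\Sigma$ together with $0$ of multiplicity $n$, and its parallel hypersurfaces are $\Gamma_l\times\mathbb{H}^n$, where $\Gamma_l$ is the parallel hypersurface of $\Sigma$ at distance $l$; Cartan's equivalence in $\mathbb{H}^m$ then finishes it, and you have simply written out the details the paper treats as evident (the shape-operator computation, the characteristic-polynomial step for $(1)\Leftrightarrow(3)$, and the splitting of the normal exponential map).
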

	
Similarly, we have the following lemma:
	
\begin{lemma}\label{lemma:3.2aa}
For any hypersurface $\mathbb{H}^m\times\tilde{\Sigma}$ of $\mathbb{H}^m\times\mathbb{H}^n$, the following statements are equivalent:
\begin{enumerate}
\item[(1)]$\mathbb{H}^m\times\tilde{\Sigma}$ has constant principal curvatures; 
\item[(2)]$\mathbb{H}^m\times\tilde{\Sigma}$ is an isoparametric hypersurface; 
\item[(3)]$\tilde{\Sigma}$ is a hypersurface in $\mathbb{H}^n$ with constant principal curvatures. 
\end{enumerate}
\end{lemma}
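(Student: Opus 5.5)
The plan is to mirror the argument for Lemma \ref{lemma:3.2}, swapping the roles of the two factors. Let $M=\mathbb{H}^m\times\tilde{\Sigma}$ with unit normal $N=(0,\tilde{\nu})$, where $\tilde{\nu}$ is a unit normal of $\tilde{\Sigma}$ in $\mathbb{H}^n$. Then $PN=-N$, so $C=-1$ and $V=0$.

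First I will compute the shape operator. Since the Levi-Civita connection of the product splits, $\bar{\nabla}_X N=0$ for $X$ tangent to the $\mathbb{H}^m$ factor. For $Y$ tangent to $\tilde{\Sigma}$ we get $\bar{\nabla}_Y N=(0,-\tilde{A}Y)$, where $\tilde{A}$ is the shape operator of $\tilde{\Sigma}\hookrightarrow\mathbb{H}^n$. Hence the principal curvatures of $M$ are $\tilde{\kappa}_1,\dots,\tilde{\kappa}_{n-1}$ (those of $\tilde{\Sigma}$) together with $0$ of multiplicity $m$. This makes (1)$\Leftrightarrow$(3) immediate: the principal curvatures of $M$ are constant exactly when those of $\tilde{\Sigma}$ are.

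Next I will describe the parallel hypersurfaces. The normal geodesic of $M$ through $(p,q)$ in direction $(0,\tilde{\nu}(q))$ is $t\mapsto(p,\gamma_q(t))$, where $\gamma_q$ is the normal geodesic of $\tilde{\Sigma}$ in $\mathbb{H}^n$, because geodesics of a Riemannian product are products of geodesics. Thus the parallel hypersurface of $M$ at distance $l$ is $\mathbb{H}^m\times\tilde{\Gamma}_l$, where $\tilde{\Gamma}_l$ is the parallel hypersurface of $\tilde{\Sigma}$ at distance $l$. By the shape-operator computation applied to $\tilde{\Gamma}_l$, its mean curvature in $\mathbb{H}^n\times\mathbb{H}^m$ (up to the normalizing factor) equals the mean curvature of $\tilde{\Gamma}_l$ in $\mathbb{H}^n$. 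Using the characterization recalled in the introduction, $M$ is therefore isoparametric if and only if $\tilde{\Sigma}$ is isoparametric in $\mathbb{H}^n$.

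Finally, in the real space form $\mathbb{H}^n$, isoparametricity is equivalent to having constant principal curvatures (Cartan), which gives (2)$\Leftrightarrow$(3). I do not expect any real obstacle. The only point needing care is that the parallel-hypersurface description holds locally, for small $l$ where the normal exponential map is regular. This matches the local nature of the characterization, and the case $n=2$, where $\tilde{\Sigma}$ is a curve, requires no separate treatment.
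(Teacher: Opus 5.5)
Your proposal is correct and follows the paper's route, which proves the $\Sigma\times\mathbb{H}^n$ case and transfers it by symmetry. The principal curvatures of $\tilde{\Sigma}$ together with $0$ (multiplicity $m$) give (1)$\Leftrightarrow$(3), and the parallel hypersurfaces $\mathbb{H}^m\times\tilde{\Gamma}_l$ combined with Cartan's equivalence in $\mathbb{H}^n$ give (2)$\Leftrightarrow$(3). The only slip is cosmetic: in your third paragraph the ambient space should read $\mathbb{H}^m\times\mathbb{H}^n$.
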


Now, we have the following examples with constant principal curvatures and $C^2\equiv1$.

\begin{example}\label{exam:1}
For any smooth hypersurface $\Sigma$ of $\mathbb{H}^m$ with constant principal curvatures, one can define a hypersurface of $\mathbb{H}^m\times\mathbb{H}^n$ by
$$
\Sigma\times\mathbb{H}^n:=\left\{(x,y)\in \mathbb{H}^m\times\mathbb{H}^n~|~x\in \Sigma,\ y\in\mathbb{H}^n\right\},
$$
which has at most three distinct constant principal curvatures and $C\equiv1$. 
\end{example}

\begin{example}\label{exam:2}
For any smooth hypersurface $\tilde{\Sigma}$ of $\mathbb{H}^n$ with constant principal curvatures, one can define a hypersurface of $\mathbb{H}^m\times\mathbb{H}^n$ by
$$
\mathbb{H}^m\times\tilde{\Sigma}:=\left\{(x,y)\in \mathbb{H}^m\times\mathbb{H}^n ~|~x\in\mathbb{H}^m,\ y\in \tilde{\Sigma}\right\}, 
$$
which has at most three distinct constant principal curvatures and $C\equiv-1$. 
\end{example}
	
Next, inspired by \cite{GMY24}, for any two smooth hypersurfaces of $\mathbb{H}^m$ and $\mathbb{H}^n$, we can construct a hypersurface of $\mathbb{H}^m\times\mathbb{H}^n$ with constant product angle function $C$. 
	
\begin{example}\label{exam:3.4a1a}
For any constant $-1<C<1$, and any two smooth hypersurfaces $\Sigma_1$ of $\mathbb{H}^m$ and $\Sigma_2$ of $\mathbb{H}^n$, consider the map
$\Phi:I\times\Sigma_1 \times  \Sigma_2\rightarrow\mathbb{H}^m\times\mathbb{H}^n$:
$(t,x,y)\rightarrow(p(t,x),q(t,y))$, where
\begin{equation}\label{eqn:3.1}
\begin{aligned}
&p(t,x)=\cosh(\sqrt{\frac{1-C}{2}}t)x+\sinh(\sqrt{\frac{1-C}{2}}t)N(x),\\
&q(t,y)=\cosh(\sqrt{\frac{1+C}{2}}t)y+\sinh(\sqrt{\frac{1+C}{2}}t)\tilde{N}(y),
\end{aligned}	
\end{equation}
$N(x)$ and $\tilde{N}(y)$ are unit normal vector fields of $\Sigma_1 \hookrightarrow \mathbb{H}^m$ and $\Sigma_2 \hookrightarrow \mathbb{H}^n$, respectively. Restricting $\Phi$ to its regular set gives a hypersurface with constant product angle function $C$. We denote the hypersurface constructed in \eqref{eqn:3.1} by $M_{\Sigma_1,\Sigma_2}^C$.
		
The unit normal vector field $N$ of $M_{\Sigma_1,\Sigma_2}^C$ is given by 
\begin{equation}\label{eqn:3.2}
\begin{aligned}
N&=(N_1,N_2)=(\sqrt{\frac{1+C}{1-C}}\frac{\partial p(t,x)}{\partial t},
-\sqrt{\frac{1-C}{1+C}}\frac{\partial q(t,y)}{\partial t})\\
&=(\sqrt{\frac{1+C}{2}}\sinh(\sqrt{\frac{1-C}{2}}t)x
+\sqrt{\frac{1+C}{2}}\cosh(\sqrt{\frac{1-C}{2}}t)N(x),\\
&\ \ \ \ -\sqrt{\frac{1-C}{2}}\sinh(\sqrt{\frac{1+C}{2}}t)y
-\sqrt{\frac{1-C}{2}}\cosh(\sqrt{\frac{1+C}{2}}t)\tilde{N}(y)).
\end{aligned}
\end{equation}
It follows that $M_{\Sigma_1,\Sigma_2}^C$ has constant product angle function $C$. 
		
Let $\kappa_1(x) , ... , \kappa_{m-1}(x)$ and $\tilde{\kappa}_1(y) , ... , \tilde{\kappa}_{n-1}(y)$ be the principal curvatures of $\Sigma_1 \hookrightarrow \mathbb{H}^m$ and $\Sigma_2 \hookrightarrow \mathbb{H}^n$, respectively. Let $\{e_i\}_{i=1}^{m-1}$ and $\{\tilde{e}_j\}_{j=1}^{n-1}$ be the local orthonormal principal frames of $\Sigma_1 \hookrightarrow \mathbb{H}^m$ and $\Sigma_2 \hookrightarrow \mathbb{H}^n$, respectively. At any $x\in \Sigma_1$ and $y\in \Sigma_2$, we assume that $A_1e_i=\kappa_i(x)e_i$ and $A_2\tilde{e}_j=\tilde{\kappa}_j(y)\tilde{e}_j$, where $A_1$ and $A_2$ are the shape operators of $\Sigma_1 \hookrightarrow \mathbb{H}^m$ and $\Sigma_2 \hookrightarrow \mathbb{H}^n$, respectively. Then we have 
$$
\begin{aligned}
&d\Phi(e_i)=\Big(\cosh(\sqrt{\frac{1-C}{2}}t)-\sinh(\sqrt{\frac{1-C}{2}}t)\kappa_i(x)\Big)e_i,\\
&d\Phi(\tilde{e}_j)=\Big(\cosh(\sqrt{\frac{1+C}{2}}t)-\sinh(\sqrt{\frac{1+C}{2}}t)\tilde{\kappa}_j(y)\Big)\tilde{e}_j.
\end{aligned}
$$
		
A direct calculation gives $AV=0$ and 
\begin{equation}\label{eqn:3.3}
\begin{aligned}
&A(d\Phi(e_i))=-\sqrt{\frac{1+C}{2}}\frac{\sinh(\sqrt{\frac{1-C}{2}}t)-\cosh(\sqrt{\frac{1-C}{2}}t)\kappa_i(x)}
{\cosh(\sqrt{\frac{1-C}{2}}t)-\sinh(\sqrt{\frac{1-C}{2}}t)\kappa_i(x)}d\Phi(e_i),\ \ 1\leq i\leq m-1,\\ 
& A(d\Phi(\tilde{e}_j))=\sqrt{\frac{1-C}{2}}\frac{\sinh(\sqrt{\frac{1+C}{2}}t)-\cosh(\sqrt{\frac{1+C}{2}}t)\tilde{\kappa}_j(y)}
{\cosh(\sqrt{\frac{1+C}{2}}t)-\sinh(\sqrt{\frac{1+C}{2}}t)\tilde{\kappa}_j(y)}d\Phi(\tilde{e}_j),\ \ 1\leq j\leq n-1. 
\end{aligned}
\end{equation}
\end{example}
	
\begin{remark}\label{rem:3.1}
According to the expressions \eqref{eqn:3.3}, we see that, for general hypersurfaces $\Sigma_1 \hookrightarrow \mathbb{H}^m$ and $\Sigma_2 \hookrightarrow \mathbb{H}^n$, hypersurface $M_{\Sigma_1, \Sigma_2}^C$ has nonconstant principal curvatures and constant product angle function $C$. For example, if we choose 
$\Sigma_1$ and $\Sigma_2$ as two (non-horospherical) totally umbilical hypersurfaces, then $M_{\Sigma_1, \Sigma_2}^C$ has at most three 
principal curvatures which depend on the parameter $t$. For the same reason as pointed out in Remark 3.6 of \cite{GMY24}, $M_{\Sigma_1, \Sigma_2}^C$ has constant principal curvatures if and only if, up to isometries of $\mathbb{H}^m \times \mathbb{H}^n$, $\kappa_i(x) = \tilde{\kappa}_j(y)=1$ or $\kappa_i(x) = -\tilde{\kappa}_j(y)=1$.
\end{remark}
	
\begin{example}\label{exam:3}
For any given $-1<C<1$, we choose two horospheres $\Sigma_1 \hookrightarrow \mathbb{H}^m$ and $\Sigma_2 \hookrightarrow \mathbb{H}^n$ in \eqref{eqn:3.1}. 
Let $N(x)$ and $\tilde{N}(y)$ be the unit normal vector fields of $\Sigma_1 \hookrightarrow \mathbb{H}^m$ and  $\Sigma_2 \hookrightarrow \mathbb{H}^n$ respectively, such that the principal curvatures of $\Sigma_1 \hookrightarrow \mathbb{H}^m$ and $\Sigma_2 \hookrightarrow \mathbb{H}^n$ are $\kappa_i(x)=1$ ($1\leq i\leq m-1$) and $\tilde{\kappa}_j(y)=-1$ ($1\leq j\leq n-1$), respectively. In this case, we call the hypersurface $M_{1,-1}^{C}$.
		
Now, by \eqref{eqn:3.3}, when $C\neq 0$, $M_{1,-1}^{C}$ has three distinct constant principal curvatures 
$$
\begin{tabular}{|c|c|c|c|}
\hline
{\rm value} & $0$ & $\sqrt{\frac{1+C}{2}}$  & $\sqrt{\frac{1-C}{2}}$ \\
\hline
{\rm multiplicity} & $1$ & $m-1$ & $n-1$ \\ 
\hline
\end{tabular}
$$
For principal curvatures $\sqrt{\frac{1+C}{2}}$ and $\sqrt{\frac{1-C}{2}}$, let $V_{\sqrt{\frac{1+C}{2}}}$ and $V_{\sqrt{\frac{1-C}{2}}}$ be the corresponding eigenspaces. Then, for any $X\in V_{\sqrt{\frac{1+C}{2}}}$, it holds that $PX=X$.  
For any $X\in V_{\sqrt{\frac{1-C}{2}}}$, it holds that $PX=-X$. 
		
When $C=0$, hypersurface $M_{1,-1}^{0}$ has two distinct constant principal curvatures $0$ and $\frac{1}{\sqrt{2}}$. 
		
\end{example}
	
\begin{example}\label{exam:4}
For any given $-1<C<1$, we choose two horospheres $\Sigma_1 \hookrightarrow \mathbb{H}^m$ and  $\Sigma_2 \hookrightarrow \mathbb{H}^n$ in \eqref{eqn:3.1}. 
Let $N(x)$ and $\tilde{N}(y)$ be the unit normal vector fields of $\Sigma_1 \hookrightarrow \mathbb{H}^m$ and $\Sigma_2 \hookrightarrow \mathbb{H}^n$ respectively, such that the principal curvatures of $\Sigma_1 \hookrightarrow \mathbb{H}^m$ and $\Sigma_2 \hookrightarrow \mathbb{H}^n$ are $\kappa_i(x)=1$ ($1\leq i\leq m-1$) and $\tilde{\kappa}_j(y)=1$ ($1\leq j\leq n-1$), respectively. 
In this case, we call the hypersurface $M_{1,1}^{C}$. 
		
Now, by \eqref{eqn:3.3}, hypersurface $M_{1,1}^{C}$ has three distinct constant principal curvatures 
$$
\begin{tabular}{|c|c|c|c|} 
\hline
{\rm value} & $0$ & $\sqrt{\frac{1+C}{2}}$  & $-\sqrt{\frac{1-C}{2}}$\\
\hline
{\rm multiplicity} & $1$ & $m-1$ & $n-1$\\ 
\hline
\end{tabular}
$$
		
For principal curvatures $\sqrt{\frac{1+C}{2}}$ and $-\sqrt{\frac{1-C}{2}}$, 
let $V_{\sqrt{\frac{1+C}{2}}}$ and $V_{-\sqrt{\frac{1-C}{2}}}$ be the corresponding  eigenspaces. Then, for any $X\in V_{\sqrt{\frac{1+C}{2}}}$, it holds that $PX=X$.  
For any $X\in V_{-\sqrt{\frac{1-C}{2}}}$, it holds that $PX=-X$. 
		
When $C =-\frac{(m-n)(m+n-2)}{2+(m-2)m+(n-2) n}$, hypersurface $M_{1,1}^{C}$ is a minimal hypersurface. 
\end{example}
	
\begin{theorem}[\cite{Toj26}]\label{thm:Toj26}
Let $M$ be a hypersurface of $\mathbb{H}^m\times \mathbb{H}^n$ with product angle function $|C|<1$. If $AT = TA$, then $M$ is locally given by
$$
\tilde{\Phi}:I\times \Sigma_1\times \Sigma_2 \hookrightarrow \H^m\times \H^n,\quad (t,x,y)\rightarrow(p(t,x),q(t,y))
$$
where
\begin{equation}\label{eqn:ATTA}
\begin{aligned}
&p(t,x)=\cosh(a(t))x+\sinh(a(t))N(x),\\
&q(t,y)=\cosh(b(t))y+\sinh(b(t))\tilde{N}(y),
\end{aligned}	
\end{equation}
$N(x)$ and $\tilde{N}(y)$ are unit normal vector fields of $\Sigma_1 \hookrightarrow \mathbb{H}^m$ and $\Sigma_2 \hookrightarrow \mathbb{H}^n$, respectively. The functions $a(t)$ and $b(t)$ are smooth functions satisfying 
$$
a(0) = b(0) = 0,\quad a'(t), b'(t)> 0 \ \ \text{and}\ \ (a'(t))^2 + (b'(t))^2 = 1. 
$$ 
\end{theorem}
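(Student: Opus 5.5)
The plan is to read off the product decomposition from the eigenspaces of $T$. First, I will compute $T$ on $V$. Since $PV=P(PN-CN)=N-C\,PN=(1-C^2)N-CV$, we get $TV=-CV$. On $V^\perp$, $T$ is an involution, so $TM=\mathbb{R}V\oplus\mathcal{D}_+\oplus\mathcal{D}_-$, where $\mathcal{D}_\pm=\{X\perp V:\ TX=\pm X\}=\{X\perp V:\ PX=\pm X\}$. Thus $\mathcal{D}_+$ consists of tangent vectors lying in the $T\mathbb{H}^m$ factor and orthogonal to $N_1$, and similarly $\mathcal{D}_-$ for the $\mathbb{H}^n$ factor, with $\dim\mathcal{D}_+=m-1$ and $\dim\mathcal{D}_-=n-1$. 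Because $|C|<1$, the three eigenvalues $-C,1,-1$ of $T$ are distinct. Hence $AT=TA$ forces $A$ to preserve each eigenspace, so $AV=\lambda V$ and $A\mathcal{D}_\pm\subset\mathcal{D}_\pm$. By Lemma \ref{lemma:2.1}, $\nabla C=-2\lambda V$, so $C$ is constant along $\mathcal{D}_+\oplus\mathcal{D}_-$. Also $\nabla_XV=(C\mp1)AX\in\mathcal{D}_\pm$ for $X\in\mathcal{D}_\pm$, and $\nabla_VV=2C\lambda V$.

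Second, I will prove integrability and the geometry of the leaves. Using the Codazzi equation \eqref{eqn:2.3}, $\bar\nabla P=0$ and the formula for $\nabla V$, I will show three things. The distributions $\mathcal{D}_+$, $\mathcal{D}_-$ and $\mathcal{D}_+\oplus\mathcal{D}_-=V^\perp$ are integrable. $V^\perp$ is integrable since $\langle\nabla_XY-\nabla_YX,V\rangle=\langle Y,\nabla_XV\rangle-\langle X,\nabla_YV\rangle=0$ by the symmetry of $A$ and $TA=AT$. The spans $\mathbb{R}V\oplus\mathcal{D}_\pm$ are also integrable. Writing $V=((1-C)N_1,-(1+C)N_2)$, I will check that the first component of any $\mathcal{D}_-$-vector vanishes. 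Then the leaf of $\mathcal{D}_+$ through a point, and more precisely the leaf of $V^\perp$, projects to $\mathbb{H}^m$ along a fixed $\mathcal{D}_+$-leaf. I will take $\Sigma_1$ to be the projection to $\mathbb{H}^m$ of a $\mathcal{D}_+$-leaf, which is an $(m-1)$-dimensional submanifold with unit normal $N_1/|N_1|$. Likewise $\Sigma_2$ will be the projection of a $\mathcal{D}_-$-leaf, with unit normal $-N_2/|N_2|$. The point to verify is that $N_1/|N_1|$ is constant in the $\mathbb{H}^m$ direction along $\mathcal{D}_-$. This follows because $\bar\nabla_X N=-AX\in\mathcal{D}_-$ for $X\in\mathcal{D}_-$, which has zero $\mathbb{H}^m$-component, while $|N_1|^2=(1+C)/2$ is constant along $V^\perp$. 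So the level set $\{t=0\}$ of the function whose gradient is parallel to $V$ is locally $\Sigma_1\times\Sigma_2$.

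Third, I will analyze the flow of $V$. Let $\gamma$ be an integral curve of $V/|V|$ and write $\gamma=(\gamma_1,\gamma_2)$. Since $V=((1-C)N_1,-(1+C)N_2)$, the velocity of $\gamma_1$ is a multiple of $N_1$. I will show that $N_1/|N_1|$ is parallel along $\gamma_1$ in $\mathbb{H}^m$. This uses $\bar\nabla_VN=-\lambda V$, whose first component is again a multiple of $N_1$, together with the fact that $|N_1|$ depends only on $C$. Hence $\gamma_1$ runs along the normal geodesic of $\Sigma_1$, i.e.\ $\gamma_1(t)=\cosh(a(t))x+\sinh(a(t))N(x)$, and similarly $\gamma_2$. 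The speeds satisfy $a'=\sqrt{(1-C)/2}>0$ and $b'=\sqrt{(1+C)/2}>0$ (up to orientation), with $a'^2+b'^2=1$, and after normalizing, $a(0)=b(0)=0$. Since $C$ is a function of $t$ alone (it is constant on $V^\perp$-leaves), $a$ and $b$ depend only on $t$. Finally, the map $(t,x,y)\mapsto(p(t,x),q(t,y))$ is a local diffeomorphism onto $M$, giving \eqref{eqn:ATTA}.

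The main obstacle will be the second step: showing that the normal direction $N_1/|N_1|$ genuinely restricts to a unit normal of a fixed hypersurface $\Sigma_1$ that is independent of the $\Sigma_2$-coordinate and of $t$ up to the parallel flow. In other words, the product structure has to hold globally on a neighborhood and not just at the infinitesimal level. I would handle this by first constructing local coordinates adapted to the three mutually compatible foliations. Then I would verify, through the explicit components $\bar\nabla_XN$ for $X$ in each distribution, that the $\mathbb{H}^m$-projection is constant along $\mathcal{D}_-$ and the $\mathbb{H}^n$-projection is constant along $\mathcal{D}_+$.
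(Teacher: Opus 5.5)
The paper does not prove this theorem. It quotes it from \cite{Toj26}, so your proposal is a self-contained alternative rather than a reconstruction, and it is correct. Your steps all go through:
\begin{itemize}
\item You split $TM=\mathbb{R}V\oplus\mathcal{D}_+\oplus\mathcal{D}_-$ into eigenspaces of $T$, with eigenvalues $-C,1,-1$, which are distinct because $|C|<1$.
\item You use $AT=TA$ to make $A$ preserve this splitting.
\item You note that a $V^\perp$-leaf has $P$-invariant tangent spaces.
\item You follow the integral curves of $\bar V=V/|V|$. Their projections are pregeodesics with directions $N_1/|N_1|$, $-N_2/|N_2|$ and speeds $\sqrt{(1-C)/2}$, $\sqrt{(1+C)/2}$.
\end{itemize}
The ``main obstacle'' you anticipate is milder than you fear. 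On a small plaque $U$ of a $V^\perp$-leaf, the projections $\pi_1,\pi_2$ have constant rank, with kernels $\mathcal{D}_-$ and $\mathcal{D}_+$. By the constant rank theorem, $U$ is an open subset of $\pi_1(U)\times\pi_2(U)=\Sigma_1\times\Sigma_2$. So you need no coordinates adapted to three foliations of $M$. The integrability of $\mathbb{R}V\oplus\mathcal{D}_\pm$ that you announce is true (by $\bar\nabla P=0$) but is never used.

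A real advantage of your route is that it gives $a'=\sqrt{(1-C)/2}$ and $b'=\sqrt{(1+C)/2}$ explicitly. When $C$ is constant, $a$ and $b$ are therefore linear and \eqref{eqn:3.1} follows at once. This is exactly the input the paper takes from Corollary~5.1 of \cite{Toj26} in the proof of Proposition~\ref{prop:ATTA}.

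One step needs an explicit line. The claim ``$C$ is a function of $t$ alone, hence so are $a,b$'' requires arc length along the $\bar V$-curves to be a leaf parameter. That is, the flow of $\bar V$ must carry $V^\perp$-leaves to $V^\perp$-leaves. Constancy of $C$ on each leaf does not give this by itself, and an arbitrary function with gradient parallel to $V$ need not have unit gradient. The missing fact follows from your own formulas:
\begin{itemize}
\item $\nabla_VV=2C\lambda V$ forces $\nabla_{\bar V}\bar V=0$.
\item For $X,Y\in V^\perp$, $\langle\nabla_X\bar V,Y\rangle=|V|^{-1}\langle (C\,\mathrm{id}-T)AX,Y\rangle$, which is symmetric because $A$ and $T$ are symmetric and commute.
\item Hence the $1$-form $\langle \bar V,\cdot\rangle$ is closed, and locally $\bar V=\nabla t$ with $|\nabla t|=1$.
\end{itemize}
Then the flow $\phi_s$ of $\bar V$ maps $\{t=0\}$ onto $\{t=s\}$. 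The functions $a(t)=\int_0^t\sqrt{(1-C(\tau))/2}\,d\tau$ and $b(t)=\int_0^t\sqrt{(1+C(\tau))/2}\,d\tau$ are independent of the starting point $(x,y)$. The flow map $(t,(x,y))\mapsto\phi_t(x,y)$ is then the required $\tilde\Phi$.
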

	
\begin{proposition}\label{prop:ATTA}
Let $M$ be a hypersurface of $\mathbb{H}^m\times \mathbb{H}^n$ satisfying 
$AT = TA$. If $M$ has constant principal curvatures and constant product angle function $C\neq \pm 1$, then $M$ is either an open part of $M_{1,-1}^{C}$ for some $C\in(-1,1)$, or an open part of $M_{1,1}^{C}$ for some $C\in (-1,1)$. 
\end{proposition}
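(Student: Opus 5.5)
We will invoke Theorem \ref{thm:Toj26}: since $AT=TA$ and $|C|<1$, the hypersurface $M$ is locally parametrized as $(t,x,y)\mapsto (p(t,x),q(t,y))$ with $p,q$ as in \eqref{eqn:ATTA}, built from hypersurfaces $\Sigma_1\subset\mathbb{H}^m$ and $\Sigma_2\subset\mathbb{H}^n$ and functions $a(t),b(t)$ with $(a')^2+(b')^2=1$.

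\textbf{Step 1: the profile is linear.} We first compute the unit normal exactly as in \eqref{eqn:3.2}. It is, up to sign, $N=(b'\,\partial_t p/\ldots,\,-a'\,\partial_t q/\ldots)$; more precisely, $N=(b'(t)\,\partial_a p,\,-a'(t)\,\partial_b q)$, where $\partial_a p=\sinh a\,x+\cosh a\,N(x)$ is a unit vector. This gives
$$C=\langle PN,N\rangle=(b')^2-(a')^2 .$$
Since $C$ is constant and $(a')^2+(b')^2=1$, both $a'$ and $b'$ are constant. Hence $a'=\sqrt{(1-C)/2}$ and $b'=\sqrt{(1+C)/2}$. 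Because $a(0)=b(0)=0$, we get $a=\sqrt{(1-C)/2}\,t$ and $b=\sqrt{(1+C)/2}\,t$. So $M$ is an open part of $M^C_{\Sigma_1,\Sigma_2}$ from Example \ref{exam:3.4a1a}.

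\textbf{Step 2: the principal curvatures.} By \eqref{eqn:3.3}, the principal curvatures of $M$ are $0$ (along $V=\partial_t$ up to scaling) together with
$$\lambda_i(t,x)=-\sqrt{\tfrac{1+C}{2}}\,\frac{\sinh(\alpha t)-\kappa_i(x)\cosh(\alpha t)}{\cosh(\alpha t)-\kappa_i(x)\sinh(\alpha t)},\qquad \alpha=\sqrt{\tfrac{1-C}{2}},$$
and the analogous expressions $\mu_j(t,y)$ built from $\tilde\kappa_j$ with $\beta=\sqrt{(1+C)/2}$.

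\textbf{Step 3: forcing $\kappa_i,\tilde\kappa_j=\pm1$.} The main point, which is essentially Remark \ref{rem:3.1}, is to show that constancy of the principal curvatures of $M$ forces $\kappa_i\equiv\pm1$ and $\tilde\kappa_j\equiv\pm1$. Fix $x$ and consider the function $f_\kappa(t)=\dfrac{\tanh(\alpha t)-\kappa}{1-\kappa\tanh(\alpha t)}$. We have $\partial_t f_\kappa=\alpha(1-\kappa^2)\operatorname{sech}^2(\alpha t)/(1-\kappa\tanh\alpha t)^2$, which vanishes identically iff $\kappa^2=1$.

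The difficulty is that constant principal curvatures means the \emph{multiset} of eigenvalues is constant, not each labelled $\lambda_i$. However, the finitely many constant values, together with continuity in $t$, force each continuous branch $\lambda_i(t,x)$ to be locally constant in $t$. The same argument applies to the $\mu_j$. Hence $\kappa_i^2=\tilde\kappa_j^2=1$ everywhere, and by continuity the signs are locally constant, so we may work on a connected open set.

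\textbf{Step 4: normalising the signs.} Since $\kappa_i=\pm1$ for all $i$, $\Sigma_1$ is totally umbilical with $|\kappa|=1$, hence an open part of a horosphere; likewise for $\Sigma_2$. Reversing $N(x)$ or $\tilde N(y)$ changes the signs of $\kappa$ and $\tilde\kappa$. Combined with the isometries of $\mathbb{H}^m\times\mathbb{H}^n$ and the reparametrization $t\mapsto -t$ (together with a change of orientation of $M$, which sends $C$ to itself), this reduces to $\kappa=1$ and $\tilde\kappa=\pm1$. That is, $M$ is an open part of $M^C_{1,-1}$ or of $M^C_{1,1}$.

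I expect Step 3, the bookkeeping of eigenvalue branches and sign normalizations, to be the only delicate point. The rest is direct computation from Theorem \ref{thm:Toj26} and Example \ref{exam:3.4a1a}.
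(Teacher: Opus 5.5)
Your route is the paper's: Theorem \ref{thm:Toj26}, then reduction to the linear profile \eqref{eqn:3.1}, then Remark \ref{rem:3.1}. Your computation $N=(b'\partial_a p,\,-a'\partial_b q)$, giving $C=(b')^2-(a')^2$, is a correct self-contained replacement for the paper's citation of Corollary 5.1 of \cite{Toj26}. Your Step 3 argument also works: fixing $x$ and using continuity of $t\mapsto\lambda_i(t,x)$ into the finite set of principal curvatures of $M$ correctly yields $\kappa_i(x)^2=\tilde\kappa_j(y)^2=1$, using $\alpha,\beta>0$, i.e.\ $C\neq\pm1$.

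One step is asserted without justification: ``since $\kappa_i=\pm1$ for all $i$, $\Sigma_1$ is totally umbilical.'' A priori $\Sigma_1$ could have both principal curvatures $1$ and $-1$. Constancy of the principal curvatures of $M$ does not exclude this, since it would only produce the constant values $\pm\sqrt{(1+C)/2}$ on the $\Sigma_1$-directions.

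To close the gap, first get constant multiplicities. The multiplicity of $+1$ equals $\frac12(\operatorname{tr}A_1+m-1)$, which is continuous and integer-valued, hence locally constant. So $\Sigma_1$ has constant principal curvatures. Next, rule out mixed signs. If both values occurred, the Codazzi equation in $\mathbb{H}^m$ makes both eigendistributions totally geodesic, hence parallel, so mixed sectional curvatures vanish. The Gauss equation gives them as $-1+\kappa\kappa'$, so $\kappa\kappa'=1$ (Cartan's identity). This contradicts $1\cdot(-1)=-1$. The same argument applies to $\Sigma_2$ when $n\geq3$; for $n=2$ umbilicity is automatic.

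In Step 4, reversing only one of $N(x)$, $\tilde N(y)$ is not a symmetry: it turns $M^C_{1,-1}$ into the non-congruent $M^C_{1,1}$. The only admissible move is the simultaneous reversal, which is exactly $t\mapsto -t$. That move suffices to arrange $\kappa=1$, so your conclusion is correct.
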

	
\begin{proof}
According to Theorem \ref{thm:Toj26}, $M$ is locally given by the immersion \eqref{eqn:ATTA}. By Corollary 5.1 of 
\cite{Toj26}, it follows that $M$ can be further locally given by the immersion \eqref{eqn:3.1}. 
Thus by Remark \ref{rem:3.1}, $M$ is an open part of one of the hypersurfaces in Examples \ref{exam:3} and \ref{exam:4}.  
\end{proof}
	
In the following, we construct hypersurfaces of $\mathbb{H}^m\times \mathbb{H}^n$ with two nonconstant principal curvatures. 
	
\begin{example}\label{ex:two-pc-horospheres}
Let $\Sigma_1\subset \mathbb{H}^m$ and $\Sigma_2\subset \mathbb{H}^n$ be horospheres. 
Let $N$ and $\widetilde N$ be unit normal vector fields of
$\Sigma_1\hookrightarrow \mathbb{H}^m$ and $\Sigma_2\hookrightarrow \mathbb{H}^n$, 
respectively, and let $A_1,A_2$ denote their shape operators. We
consider the two cases
$$
\text{(A)}\quad A_1=\mathrm{id},\quad A_2=-\mathrm{id},
\qquad\qquad
\text{(B)}\quad A_1=\mathrm{id},\quad A_2=\mathrm{id}.
$$
		
Let $a,b:I\to\mathbb R$ be smooth functions satisfying
$$
a(0)=b(0)=0,\qquad a'(t),b'(t)>0,\qquad
\bigl(a'(t)\bigr)^2+\bigl(b'(t)\bigr)^2=1,
$$
and write
$$
a'(t)=\cos\theta(t),\qquad b'(t)=\sin\theta(t),
\qquad 0<\theta(t)<\frac\pi2.
$$
Consider the immersion
$\tilde\Phi:I\times\Sigma_1\times\Sigma_2\to \H^m\times \H^n$ defined by
\eqref{eqn:ATTA}. Let $M$ be the hypersurface generated by the mapping $\tilde\Phi$. 
Then according to (3.7) of \cite{Toj26}, we know that the principal
curvatures of $M$ are as follows. 
		
\medskip
\noindent\textbf{Case (A):}
\begin{equation}\label{eq:pcs-caseA}
-\theta'(t),\qquad \sin\theta(t),\qquad \cos\theta(t),
\end{equation}
with multiplicities $1$, $m-1$ and $n-1$, respectively. 
		
\medskip
\noindent\textbf{Case (B):}
\begin{equation}\label{eq:pcs-caseB}
-\theta'(t),\qquad \sin\theta(t),\qquad -\cos\theta(t),
\end{equation}
with multiplicities $1$, $m-1$ and $n-1$, respectively.
		
We now choose $\theta(t)$ so that two of the three numbers in
\eqref{eq:pcs-caseA} or \eqref{eq:pcs-caseB} coincide, while none of
them is constant.
		
\medskip
\noindent\textbf{Two principal curvatures in Case (A).}
\begin{itemize}
\item[(a)] If $\theta'=-\sin\theta$, 
then the principal curvatures are 
$$
\sin\theta(t)\quad(\text{multiplicity }m),
\qquad
\cos\theta(t)\quad(\text{multiplicity }n-1).
$$
\item[(b)] If $\theta'=-\cos\theta$,
then the principal curvatures are 
$$
\sin\theta(t)\quad(\text{multiplicity }m-1),
\qquad
\cos\theta(t)\quad(\text{multiplicity }n).
$$
\end{itemize}
In both subcases, the hypersurfaces have two distinct principal
curvatures, which depend on $t$. 
		
\medskip
\noindent\textbf{Two principal curvatures in Case (B).}
\begin{itemize}
\item[(a)] If $\theta'=-\sin\theta$,
then the principal curvatures are 
$$
\sin\theta(t)\quad(\text{multiplicity }m),
\qquad
-\cos\theta(t)\quad(\text{multiplicity }n-1).
$$
\item[(b)] If $\theta'=\cos\theta$, 
then the principal curvatures are 
$$
\sin\theta(t)\quad(\text{multiplicity }m-1),
\qquad
-\cos\theta(t)\quad(\text{multiplicity }n).
$$
\end{itemize}
Again, the hypersurfaces have exactly two distinct principal curvatures, which are nonconstant. 
		
For example, in the subcase $\theta'=-\sin\theta$, the solutions with
$\theta(0)\in(0,\pi/2)$ are
$$
\theta(t)=2\arctan\bigl(ce^{-t}\bigr),\qquad 0<c<1,
$$
and the corresponding functions $a,b$ are
$$
a(t)=\int_0^t\cos\theta(s)\,ds,\qquad
b(t)=\int_0^t\sin\theta(s)\,ds.
$$
\end{example}
	
Finally, inspired by \cite{DP,GMY24}, we construct the following example. 

\begin{example}\label{exam:5}
For any given $\tau<-1$, we define $M_\tau=\{(p,q)\in
\mathbb{H}^n\times\mathbb{H}^n|\ \langle p,q\rangle=\tau\}$.
The unit normal vector field to $M_\tau$ in $\mathbb{H}^n \times \mathbb{H}^n$ is given by
$$
N_{(p, q)}=\frac{1}{\sqrt{2(\tau^2-1)}}(q+\tau p, p+\tau q).
$$
It follows that the product angle function $C=\langle PN,N\rangle = 0$ holds on $M_\tau$. 
		
For any $(v_1,v_2)\in T_{(p,q)}M_\tau$, the shape operator $A$ associated to $N$ takes the form
$$
A(v_1,v_2)=\frac{1}{\sqrt{2(\tau^2-1)}}\big[-(v_2,v_1)-\tau(v_1,v_2)
-\langle p,v_2\rangle(p,-q)\big].
$$
It follows that $M_{\tau}$ has three distinct constant principal curvatures 
$$
\begin{tabular}{|c|c|c|c|}
\hline
{\rm value} & $0$ & $\sqrt{\frac{\tau-1}{2(\tau+1)}}$  & $\sqrt{\frac{\tau+1}{2(\tau-1)}}$\\
\hline
{\rm multiplicity} & $1$ & $n-1$ & $n-1$\\ 
\hline
\end{tabular}
$$
For principal curvatures $\sqrt{\frac{\tau-1}{2(\tau+1)}}$ and $\sqrt{\frac{\tau+1}{2(\tau-1)}}$, let $V_{\sqrt{\frac{\tau-1}{2(\tau+1)}}}$ and $V_{\sqrt{\frac{\tau+1}{2(\tau-1)}}}$ be the corresponding eigenspaces. Then, $PV_{\sqrt{\frac{\tau-1}{2(\tau+1)}}}=V_{\sqrt{\frac{\tau+1}{2(\tau-1)}}}$.  
		
The tube of radius $l$ over the submanifold $\{(p,p) \in \mathbb{H}^n \times \mathbb{H}^n\}$ is given by the set of points $\{(x,y) \in \mathbb{H}^n \times \mathbb{H}^n\}$ such that
$$
(x, y)=\left(\cosh \left(\tfrac{l}{\sqrt{2}}\right) p+\sqrt{2} \sinh \left(\tfrac{l}{\sqrt{2}}
\right) v, \cosh \left(\tfrac{l}{\sqrt{2}}\right) p-\sqrt{2} \sinh \left(\tfrac{l}{\sqrt{2}}
\right) v\right),
$$
where $p \in \mathbb{H}^{n}$, $v \in T_{p} \mathbb{H}^{n}$, and $\|v\| = \tfrac{1}{\sqrt{2}}$.
From the fact that $\langle x, y\rangle = -\cosh (\sqrt{2} l)$, it follows that the hypersurface $M_\tau$ is a tube of radius $\frac{1}{\sqrt{2}}{\rm arccosh} (-\tau)$ over $\{(p,p) \in \mathbb{H}^n \times \mathbb{H}^n\}$. It also means that the hypersurfaces $M_\tau$, $\tau<-1$ are mutually parallel tubes, and their focal submanifold is the submanifold $\{(p,p) \in \mathbb{H}^n \times \mathbb{H}^n\}$. 
Thus, hypersurfaces $\{M_\tau,\ \tau<-1\}$ are isoparametric hypersurfaces.
\end{example}
	
\begin{remark}
In Theorem 13 of \cite{DDO}, D\'{\i}az-Ramos, Dom\'{\i}nguez-V\'{a}zquez and Otero classified the homogeneous hypersurfaces in 
$\H^{m_1}\times \H^{m_2}\times \cdots \times \H^{m_k}$. There are four types of homogeneous hypersurfaces in $\mathbb{H}^m\times \mathbb{H}^n$, namely $({\rm FH})$, $({\rm FS})$, $({\rm CEI})$ and $({\rm CER})$. We point out that these homogeneous hypersurfaces are exactly the 
$\Sigma\times \mathbb{H}^n$, where $\Sigma$ is a hypersurface of $\mathbb{H}^m$ with constant principal curvatures (see Example \ref{exam:1}); or $\mathbb{H}^m\times \tilde{\Sigma}$, where $\tilde{\Sigma}$ is a hypersurface of $\mathbb{H}^n$ with constant principal curvatures (see Example \ref{exam:2}); or $M_{1,-1}^{C}$ for some $C\in(-1,1)$ (see Example \ref{exam:3}); or $M_{1,1}^{C}$ for some $C\in(-1,1)$ (see Example \ref{exam:4}); or $M_\tau$ when $m = n$ for some $\tau<-1$ (see Example \ref{exam:5}). 
\end{remark}

\section{Cartan's formulas}\label{sect:4} 
	
Let $M$ be a hypersurface of $\mathbb{H}^m\times \mathbb{H}^n$ with constant principal curvatures and constant product angle function $C\neq\pm1$. Then by Corollary \ref{coro:2.2}, 
$V$ is a principal vector field of $M$, and it satisfies $AV=0$ and $\nabla_V V = 0$. Moreover, $V^\perp$ is $A$-invariant. 
Suppose tangent vector fields $X,Y\in TM$ satisfy $AX=\lambda X$ and $AY=\mu Y$. Then
\begin{equation}\label{eqn:4.1}
\langle(\nabla_Z A)X,Y\rangle=(\lambda-\mu)\langle\nabla_Z X,Y\rangle,  
\end{equation}
for any tangent vector $Z$.
	
Observe that the distribution $V^\perp$ is $A$-invariant. We denote the set of eigenvalues of $A$ restricted to $V^\perp$ by $\sigma(A|_{V^\perp})$. 
For any $\lambda\in\sigma(A|_{V^\perp})$, let $V_\lambda$ be the corresponding eigenspace restricted to $V^\perp$. 
	
\begin{lemma}\label{lemma:4.1}
Let $M$ be a hypersurface of $\mathbb{H}^m\times \mathbb{H}^n$ ($m\geq3, n\geq2$) with constant principal curvatures and constant product angle function $C\neq\pm1$.
\begin{enumerate}
\item For any $\lambda\in\sigma(A|_{V^\perp})$, there holds
$$
\left(\lambda^2-\frac{1}{2}(1-C^2)\right)\langle TX,Y\rangle=C\lambda^2\langle X,Y  \rangle, \ \ \forall \ X,Y\in V_\lambda. 
$$

\item For any $\lambda,\mu\in\sigma(A|_{V^\perp})$ and $\lambda\neq \mu$, there holds
$$
\left(\lambda\mu-\frac{1}{2}(1-C^2)\right)\langle TX,Y\rangle=(\lambda - \mu)\lang{\nabla_V X, Y}, \ \ \forall \ X\in V_\lambda,\ Y\in V_\mu. 
$$
\end{enumerate}
\end{lemma}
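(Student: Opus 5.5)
The plan is to evaluate the Codazzi equation \eqref{eqn:2.3} on the pair $(V,X)$ with $X\in V_\lambda$, simplify using Lemma \ref{lemma:2.1} and Corollary \ref{coro:2.2}, and then take the inner product with $Y\in V_\mu$. The two statements are the two cases $\mu=\lambda$ and $\mu\neq\lambda$ of one identity. Since the principal curvatures are constant, the eigenspaces $V_\lambda$ have constant rank and form smooth distributions. So we may take $X,Y$ to be local eigenvector fields, and formula \eqref{eqn:4.1} applies.

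\emph{Step 1: Codazzi.} Put $Z=V$ and $X\in V_\lambda\subset V^\perp$ into \eqref{eqn:2.3}. We have $\langle X,V\rangle=0$ and $\langle V,V\rangle=1-C^2$. Hence
\[
(\nabla_V A)X-(\nabla_X A)V=-\tfrac12(1-C^2)\,TX .
\]

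\emph{Step 2: the term $(\nabla_X A)V$.} By Corollary \ref{coro:2.2}, $AV=0$. By Lemma \ref{lemma:2.1}, $\nabla_X V=CAX-TAX=C\lambda X-\lambda TX$. Therefore
\[
(\nabla_X A)V=-A\nabla_X V=-C\lambda^2X+\lambda\,ATX .
\]
Substituting this into Step 1 gives
\[
(\nabla_V A)X=-C\lambda^2X+\lambda\,ATX-\tfrac12(1-C^2)TX .
\]

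\emph{Step 3: pair with $Y\in V_\mu$.} The operator $A$ is symmetric, so $\langle ATX,Y\rangle=\langle TX,AY\rangle=\mu\langle TX,Y\rangle$. By \eqref{eqn:4.1}, the left-hand side pairs to $\langle(\nabla_V A)X,Y\rangle=(\lambda-\mu)\langle\nabla_V X,Y\rangle$. This yields
\[
(\lambda-\mu)\langle\nabla_V X,Y\rangle=-C\lambda^2\langle X,Y\rangle+\Big(\lambda\mu-\tfrac12(1-C^2)\Big)\langle TX,Y\rangle .
\]

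\emph{Step 4: the two cases.} If $\mu=\lambda$, the left side vanishes and we obtain (1). If $\mu\neq\lambda$, then $\langle X,Y\rangle=0$ because distinct eigenspaces are orthogonal, and we obtain (2).

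There is no real obstacle here. The main points needing care are the sign conventions in $\nabla_X V=CAX-TAX$ and in the Codazzi right-hand side. One should also confirm that $TX$ is a genuine tangent vector in $V^\perp$, which holds because $V^\perp$ is $T$-invariant, so that the symmetry of $A$ can be used.
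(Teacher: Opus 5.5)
Your proposal is correct and matches the paper's proof: both plug $(V,X)$ into the Codazzi equation \eqref{eqn:2.3}, use $AV=0$ and $\nabla_XV=\lambda(CX-TX)$, and pair the result with $Y\in V_\mu$. The only difference is cosmetic: you solve for $(\nabla_VA)X$ first and then apply \eqref{eqn:4.1} to both cases at once, whereas the paper expands both covariant derivatives before pairing. Your caveat about $TX\in V^\perp$ is unnecessary, since $T$ already maps $TM$ to $TM$ and $A$ is symmetric on all of $TM$.
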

	
\begin{proof}
Suppose $X\in V_\lambda$ is a principal direction with respect to $\lambda \in \sigma(A|_{V^\perp})$. By Codazzi equation \eqref{eqn:2.3} and $AV=0$, we have 
\begin{equation}\label{eqn:lem:4.1}
(\nabla_{V}A)X - (\nabla_{X}A) V =\lambda \nabla_V X - A(\nabla_V X) + A(\nabla_X V) = -\frac{1}{2} (1-C^2) TX.
\end{equation}
		
(1) Taking the inner product of \eqref{eqn:lem:4.1} with $Y\in V_\lambda$, we have
$$
\lambda \lang{\nabla_X V, Y} = -\frac{1}{2} (1-C^2) \lang{TX,Y}.
$$
By Lemma \ref{lemma:2.1} and $AX=\lambda X$, we can get
$$
\left(\lambda^2-\frac{1}{2}(1-C^2)\right)\langle TX,Y\rangle=C\lambda^2\langle X,Y  \rangle, \ \ \forall \ X,Y\in V_\lambda. 
$$
		
(2) Taking the inner product of \eqref{eqn:lem:4.1} with $Y\in V_\mu$, with the use of Lemma \ref{lemma:2.1}, we have
$$
\left(\lambda\mu-\frac{1}{2}(1-C^2)\right)\langle TX,Y\rangle=(\lambda - \mu)\lang{\nabla_V X, Y}, \ \ \forall \ X\in V_\lambda,\ Y\in V_\mu.
$$
\end{proof}
	
\begin{remark}\label{rem:denominater}
Let $Y=X$ and $\|X\|=1$ in Lemma \ref{lemma:4.1} (1), we have
$$
\left(\lambda^2-\frac{1}{2}(1-C^2)\right)\langle TX,X\rangle=C\lambda^2.
$$
We point out that $\lambda^2-\frac{1}{2}(1-C^2)=0$ if and only if $C = 0$ and $\lambda^2=\frac{1}{2}$. 
In fact, by $\lambda^2-\frac{1}{2}(1-C^2)=0$, 
we have $C\lambda^2 = 0$, hence $C=0$ or $\lambda=0$. If $\lambda = 0$, then $C^2 = 1$, which contradicts the assumption that $C\neq\pm 1$. If $C = 0$, then $\lambda^2 = \frac{1}{2}$. 
\end{remark}
	
\begin{corollary}\label{coro:4.2}
Let $M$ be a hypersurface of $\mathbb{H}^m\times \mathbb{H}^n$ ($m\geq3, n\geq2$) with constant principal curvatures and constant product angle function $C \neq \pm 1$.
\begin{enumerate}
\item[(1)]
If $\lambda=0$, then $TV_0\perp V_0$. 
\item[(2)]
If $C=0$ and $\lambda^2\neq\frac{1}{2}$, then  $TV_\lambda\perp V_\lambda$. 
\end{enumerate}
\end{corollary}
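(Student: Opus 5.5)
Both parts should follow from Lemma \ref{lemma:4.1}(1) by polarization. The key point is that the identity there holds for every pair $X,Y\in V_\lambda$, not only for unit vectors. Since $T$ is symmetric, the bilinear form $(X,Y)\mapsto\langle TX,Y\rangle$ restricted to $V_\lambda$ is determined by its quadratic form.

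For (1), take $\lambda=0\in\sigma(A|_{V^\perp})$. Lemma \ref{lemma:4.1}(1) then reads
$$
-\tfrac12(1-C^2)\langle TX,Y\rangle=0,\qquad \forall\,X,Y\in V_0 .
$$
Because $C\neq\pm1$, the coefficient $\tfrac12(1-C^2)$ is nonzero. Hence $\langle TX,Y\rangle=0$ for all $X,Y\in V_0$, which is exactly $TV_0\perp V_0$.

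For (2), set $C=0$ with $\lambda^2\neq\tfrac12$. Now Lemma \ref{lemma:4.1}(1) becomes
$$
\bigl(\lambda^2-\tfrac12\bigr)\langle TX,Y\rangle=0,
$$
since the right-hand side $C\lambda^2\langle X,Y\rangle$ vanishes. By Remark \ref{rem:denominater} (or directly) the factor $\lambda^2-\tfrac12$ is nonzero. Dividing through gives $\langle TX,Y\rangle=0$ on $V_\lambda$, so $TV_\lambda\perp V_\lambda$.

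There is no real obstacle here. The only care needed is to check that the vanishing holds as a bilinear form on $V_\lambda$, which Lemma \ref{lemma:4.1}(1) already provides for arbitrary $X,Y\in V_\lambda$. One should also note that $V_\lambda\subset V^\perp$, so $T$ restricted there is the orthogonal involution, and the statement is meaningful.
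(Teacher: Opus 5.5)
Your proof is correct and is the paper's argument: in both cases the right-hand side $C\lambda^2\langle X,Y\rangle$ of Lemma~\ref{lemma:4.1}(1) vanishes while the coefficient $\lambda^2-\frac12(1-C^2)$ is nonzero, so $\langle TX,Y\rangle=0$ for all $X,Y\in V_\lambda$. The opening mention of polarization is unnecessary, as you note yourself, since the lemma already holds for arbitrary pairs $X,Y\in V_\lambda$.
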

	
\begin{proof}
When $\lambda = 0$, by $C\neq \pm 1$, it holds that $\lambda^2-\frac{1}{2}(1-C^2)\neq 0$. Then by Lemma \ref{lemma:4.1} (1), we have
$$
\langle TX,Y\rangle = 0,\ \ \forall \ X,Y\in V_0, 
$$
which implies that $TV_0 \perp V_0$. 
		
When $C=0$ and $\lambda^2\neq\frac{1}{2}$, then $\lambda^2-\frac{1}{2}(1-C^2)\neq 0$. By Lemma \ref{lemma:4.1} (1), we also have 
$$
\langle TX,Y\rangle = 0,\ \ \forall \ X,Y\in V_\lambda,
$$
which implies $TV_\lambda \perp V_\lambda$.
\end{proof}

\begin{lemma}\label{lemma:4.3}
Let $M$ be a hypersurface of $\mathbb{H}^m\times \mathbb{H}^n$ ($m\geq3, n\geq2$) with constant principal curvatures and constant product angle function $C\neq\pm1$. For all $\lambda,\mu$ in $\sigma(A|_{V^{\perp}})$, we have
\begin{enumerate}
\item[(1)] $\nabla_X Y\perp V_\lambda$ if $X\in V_\lambda$, $Y\in V_\mu$, $\lambda\neq\mu$; 
\item[(2)] $\nabla_X Y+\frac{\lambda\big(C\langle X,Y\rangle-\langle TX,Y\rangle\big)}{1-C^2}V\in V_\lambda$ if $X,Y\in V_\lambda$. 
\end{enumerate}
\end{lemma}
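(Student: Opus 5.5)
The plan is to use the standard Cartan-type identity \eqref{eqn:4.1} together with the Codazzi equation \eqref{eqn:2.3}, and to compute the $V$-component separately using $\nabla_XV$ from Lemma \ref{lemma:2.1}. Since $TM=\mathbb{R}V\oplus\bigoplus_{\nu\in\sigma(A|_{V^\perp})}V_\nu$ is an orthogonal decomposition (Corollary \ref{coro:2.2}), it suffices to compute, for each statement, the inner products of $\nabla_XY$ with $V$ and with vectors in each $V_\nu$.

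For (1), take $X,Z\in V_\lambda$ and $Y\in V_\mu$ with $\lambda\neq\mu$. By \eqref{eqn:4.1},
$$
\langle\nabla_XY,Z\rangle=\frac{1}{\mu-\lambda}\langle(\nabla_XA)Y,Z\rangle .
$$
By Codazzi \eqref{eqn:2.3}, $(\nabla_XA)Y=(\nabla_YA)X$, because $X,Y\perp V$ makes the right-hand side of \eqref{eqn:2.3} vanish. Hence
$$
\langle(\nabla_XA)Y,Z\rangle=\langle(\nabla_YA)X,Z\rangle=(\lambda-\lambda)\langle\nabla_YX,Z\rangle=0,
$$
again by \eqref{eqn:4.1}, this time applied with $X,Z$ in the same eigenspace. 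Therefore $\nabla_XY\perp V_\lambda$.

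For (2), take $X,Y\in V_\lambda$. For $Z\in V_\nu$ with $\nu\neq\lambda$, we have $\langle\nabla_XY,Z\rangle=-\langle Y,\nabla_XZ\rangle$. Part (1), applied with the roles swapped ($X\in V_\lambda$, $Z\in V_\nu$), gives $\nabla_XZ\perp V_\lambda$, so this inner product vanishes. Thus $\nabla_XY$ lies in $V_\lambda\oplus\mathbb{R}V$.

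The $V$-component is then
$$
\langle\nabla_XY,V\rangle=-\langle Y,\nabla_XV\rangle=-\langle Y,CAX-TAX\rangle=-\lambda\big(C\langle X,Y\rangle-\langle TX,Y\rangle\big),
$$
using Lemma \ref{lemma:2.1} and the symmetry of $T$. Since $\|V\|^2=1-C^2\neq0$, the $V$-component of $\nabla_XY$ equals
$$
-\frac{\lambda\big(C\langle X,Y\rangle-\langle TX,Y\rangle\big)}{1-C^2}\,V .
$$
Adding this term to $\nabla_XY$ leaves a vector in $V_\lambda$, which is exactly (2).

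There is no real obstacle here. The only points needing care are verifying that the Codazzi right-hand side vanishes for $X,Y\in V^\perp$, and using that $T$ is self-adjoint on $TM$ so that $\langle Y,TAX\rangle=\lambda\langle TX,Y\rangle$.
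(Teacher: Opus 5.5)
Your proof is correct and is essentially the paper's own argument: part (1) follows from the Codazzi equation, whose right-hand side vanishes for $X,Y\in V^\perp$, combined with \eqref{eqn:4.1}; part (2) uses (1) to place $\nabla_XY$ in $V_\lambda\oplus\mathbb{R}V$ and then reads off the $V$-component from $\nabla_XV=CAX-TAX$. The only difference is cosmetic: you write the Codazzi step as two separate applications of \eqref{eqn:4.1}, which the paper combines into a single line.
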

	
\begin{proof}
(1) Take $X,Z\in V_\lambda$, $Y\in V_\mu$ and $\lambda\neq \mu$, by Codazzi equation \eqref{eqn:2.3}, we have
$$
0 = \langle(\nabla_{X} A)Y - (\nabla_{Y} A)X, Z\rangle = (\mu - \lambda)\langle\nabla_{X}Y, Z\rangle.
$$
Since $Z$ is arbitrary, it follows that $\nabla_X Y\perp V_\lambda$.
		
\vskip 4mm
		
(2) For $X,Y\in V_\lambda$ and $Z\in V_\mu$, it follows from (1) that  $\langle \nabla_XY,Z\rangle=-\langle \nabla_XZ,Y\rangle=0$, 
which implies that $\nabla_{X}Y\in V_\lambda \oplus {\rm Span}\{V\}$. By Lemma \ref{lemma:2.1}, we have $\nabla_X V = CAX - TAX$, hence 
$$
\lang{\nabla_{X}Y, V} = -\lang{Y, \nabla_X V} = -\lambda\lang{Y , CX - TX} = -\lambda(C\lang{X,Y} - \lang{TX,Y}).
$$
We can conclude
$$
\nabla_X Y+\frac{\lambda\big(C\langle X,Y\rangle-\langle TX, Y\rangle\big)}{1-C^2}V\in V_\lambda,\quad \forall X,Y\in V_\lambda.
$$
\end{proof}
	
In the following, we give Cartan's formulas (see \eqref{eqn:car-general},  \eqref{eqn:car-before}, \eqref{eqn:car-2}, \eqref{eqn:car-seperate-1} and \eqref{eqn:car-seperate-2}) for the hypersurface of $\mathbb{H}^m \times \mathbb{H}^n$ with constant principal curvatures and constant product angle function $C \neq \pm 1$. 
	
\begin{lemma}\label{lemma:car-general}
Let $M$ be a hypersurface of $\mathbb{H}^m\times \mathbb{H}^n$ ($m\geq3, n\geq2$) with constant principal curvatures and constant product angle function $C\neq\pm1$. 
Let $X\in V^{\perp}$ be a unit principal vector at a point $p$ with associated principal curvature $\lambda$. 
For any principal orthonormal basis $\{e_i\}_{i=1}^{m+n-2}$ of $V^{\perp}$ satisfying $Ae_i=\mu_ie_i$, we have 
\begin{equation}\label{eqn:car-general}
\begin{aligned}
\sum_{i=1, \mu_i\neq\lambda}^{m+n-2}\frac{1}{\lambda-\mu_i}
\Big\{&\lambda\mu_i\big(1+\frac{(C-\langle TX,X\rangle)(C-\langle Te_i,e_i\rangle)-2\langle TX,e_i\rangle^2}{1-C^2}\big)\\
&\quad\quad -\frac{1}{2}\big(1+\langle TX,X\rangle\langle Te_i,e_i\rangle-2\langle TX,e_i\rangle^2\big)\Big\}=0.
\end{aligned}
\end{equation}
In particular, when $C=0$, we have
\begin{equation}\label{eqn:car-0-general}
\sum_{i=1, \mu_i\neq\lambda}^{m+n-2}\frac{\lambda\mu_i-\frac{1}{2}}{\lambda-\mu_i} \Big(1+\langle TX,X\rangle\langle Te_i,e_i\rangle-2\langle TX,e_i\rangle^2\Big)=0.
\end{equation}
\end{lemma}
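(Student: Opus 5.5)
We prove this Cartan-type identity the classical way. We compute the sectional curvature $K(X,Y)$, for $X\in V_\lambda$ and $Y\in V_\mu$ with $\lambda\neq\mu$, in two ways, and then sum over $Y=e_i$. First, the Gauss equation \eqref{eqn:2.2} gives
$$
\langle R(X,e_i)e_i,X\rangle=-\tfrac12\big(1+\langle TX,X\rangle\langle Te_i,e_i\rangle-\langle TX,e_i\rangle^2\big)+\lambda\mu_i
$$
for orthonormal $X,e_i$. Second, we express $\langle R(X,e_i)e_i,X\rangle$ through the connection, using \eqref{eqn:4.1}, Lemma \ref{lemma:4.3} and the Codazzi equation \eqref{eqn:2.3} to control the components of $\nabla_X X$, $\nabla_{e_i}e_i$, $\nabla_X e_i$ and $\nabla_{e_i}X$. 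The key point is that constancy of the principal curvatures makes the derivative terms vanish. What remains is sums of quotients $\langle(\nabla_{e_j}A)X,e_i\rangle^2/((\lambda-\mu_j)(\mu_i-\mu_j))$, together with the contributions along $V$.

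The new ingredient compared with the space form case is the direction $V$. By Lemma \ref{lemma:4.3}(2), the $V$-component of $\nabla_X X$ is $-\lambda(C-\langle TX,X\rangle)V/(1-C^2)$, and similarly for $\nabla_{e_i}e_i$. The $V$-components of $\nabla_X e_i$ are obtained from $\langle\nabla_X e_i,V\rangle=-\langle e_i,\nabla_XV\rangle=\lambda\langle TX,e_i\rangle$, using $\nabla_XV=CAX-TAX$ from Lemma \ref{lemma:2.1}. The $V$-component of $\nabla_{e_i}X$ is handled in the same way. Inserting these into
$$
R(X,e_i)e_i=\nabla_X\nabla_{e_i}e_i-\nabla_{e_i}\nabla_Xe_i-\nabla_{[X,e_i]}e_i
$$
produces the extra term
$$
\lambda\mu_i\,\frac{(C-\langle TX,X\rangle)(C-\langle Te_i,e_i\rangle)-\langle TX,e_i\rangle^2\cdot(\ldots)}{1-C^2}.
$$
We would track the coefficient of $\langle TX,e_i\rangle^2$ carefully to obtain the factor $2$ appearing in the statement. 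The terms involving $\nabla_V X$ are treated with Lemma \ref{lemma:4.1}(2), which gives $\langle\nabla_VX,e_i\rangle=\frac{\lambda\mu_i-\frac12(1-C^2)}{\lambda-\mu_i}\langle TX,e_i\rangle$; these cross terms should combine with the Gauss-equation term $\langle TX,e_i\rangle^2$.

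We then divide the resulting identity for each $i$ by $\lambda-\mu_i$ and sum over all $i$ with $\mu_i\neq\lambda$. As in Cartan's original argument, the quadratic terms $\langle(\nabla_{e_j}A)X,e_i\rangle^2$ with pairwise distinct $\lambda,\mu_i,\mu_j$ appear antisymmetrically, with weights $\frac{1}{(\lambda-\mu_i)(\lambda-\mu_j)(\mu_i-\mu_j)}$ up to sign, because the Codazzi equation makes $\langle(\nabla_{e_j}A)X,e_i\rangle$ symmetric up to the torsion term $-\frac12(\langle e_j,V\rangle\ldots)$, which vanishes on $V^\perp$. Hence these terms cancel in the double sum, and \eqref{eqn:car-general} remains. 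The special case \eqref{eqn:car-0-general} follows at once by setting $C=0$, since the two braces then become proportional with common factor $1+\langle TX,X\rangle\langle Te_i,e_i\rangle-2\langle TX,e_i\rangle^2$.

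The main obstacle is the bookkeeping of the $V$-direction terms: getting the exact coefficients of $\langle TX,e_i\rangle^2$, and checking that the contributions from $\nabla_VX$ together with the Codazzi torsion involving $TX$ combine into the stated form, rather than leaving residual non-antisymmetric terms. I would first verify the formula on $M_{1,-1}^C$, and on $M_\tau$ when $C=0$, to fix the signs before doing the general computation.
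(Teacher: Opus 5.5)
Your proposal is correct and follows the paper's proof. Both arguments compute $R(X,e_i,e_i,X)$ once through the connection (using Lemma \ref{lemma:4.3}, \eqref{eqn:4.1} and Codazzi) and once through the Gauss equation, then expand in the principal frame $\{e_k,\bar V\}$, divide by $\lambda-\mu_i$, and cancel the Codazzi quadratic terms by antisymmetry in $(i,k)$.

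The only local difference is in the term $-\langle\nabla_{[X,e_i]}e_i,X\rangle$. The paper uses Codazzi to rewrite it as $\langle\nabla_Xe_i,\nabla_{e_i}X\rangle-\frac12\langle TX,e_i\rangle^2$, whereas you expand $[X,e_i]$ in the frame and use Lemma \ref{lemma:4.1}(2). Your route gives the same result, and the bookkeeping you left open does close, as follows:
\begin{itemize}
\item The $V$-part of the $-\nabla_{e_i}\nabla_Xe_i$ term is $\frac{\lambda\mu_i}{1-C^2}\langle TX,e_i\rangle^2$.
\item The $V$-part of the bracket term is $\frac{\lambda\mu_i-\frac12(1-C^2)}{1-C^2}\langle TX,e_i\rangle^2$.
\item Adding these, together with the $+\frac12\langle TX,e_i\rangle^2$ from the Gauss equation, produces exactly the factor $2$ in both braces of \eqref{eqn:car-general}.
\end{itemize}
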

	
\begin{proof}
Extend $X$ and $\{e_i\}_{i=1}^{m+n-2}$ to be principal vector fields near $p$. For any fixed $e_i$ satisfying $Ae_i=\mu_i e_i$ and $\mu_i\neq \lambda$, the definition of Riemannian curvature gives
\begin{equation}\label{term-1}
R(X,e_i,e_i,X) = \underbrace{\langle\nabla_{X}\nabla_{e_i} e_i,X\rangle}_{(I)}\underbrace{ - \langle\nabla_{e_i}\nabla_{X} e_i,X\rangle}_{(II)}\underbrace{- \langle\nabla_{[X,e_i]} e_i,X\rangle}_{(III)}.
\end{equation}
In what follows, we will compute these three terms separately. 
		
\vskip 4mm
		
By Lemma \ref{lemma:4.3}, we have
$$
\nabla_{e_i} e_{i}+\frac{\mu_i\big(C-\langle Te_{i},e_{i}\rangle\big)}{1-C^2}V\in V_{\mu_i},\quad \nabla_X X+\frac{\lambda\big(C-\langle TX,X\rangle\big)}{1-C^2}V\in V_\lambda.
$$
So the first term $(I)$ is given by
\begin{equation}\label{term-1-I}
\begin{aligned}
(I) = &\nabla_{X}\langle \nabla_{e_i} e_i,X\rangle - \langle\nabla_{e_i} e_i,\nabla_{X}X\rangle = - \langle\nabla_{e_i} e_i,\nabla_{X}X\rangle \\
= & -\frac{\lambda\mu_{i}}{1-C^2}(C - \langle Te_i, e_i\rangle)(C - \langle TX, X\rangle).
\end{aligned}
\end{equation}
		
By Lemma \ref{lemma:4.3}, we have  $\nabla_{X}e_i\perp V_{\lambda}$. Therefore 
\begin{equation}\label{term-1-II}
(II) = -\nabla_{e_i}\langle\nabla_{X}e_i,X \rangle + \langle \nabla_{X}e_i,\nabla_{e_i}X \rangle = \langle \nabla_{X}e_i,\nabla_{e_i}X \rangle.
\end{equation}
		
By using \eqref{eqn:4.1} and Codazzi equation \eqref{eqn:2.3}, the third term $(III)$ can be expressed as
\begin{equation}\label{term-1-III}
\begin{aligned}
(III)= & - \langle\nabla_{[X,e_i]} e_i,X\rangle = \frac{1}{\lambda - \mu_{i}}\langle (\nabla_{[X, e_i]} A)e_i, X \rangle\\
= & \frac{1}{\lambda - \mu_{i}}\Big(\langle  (\nabla_{e_i}A)[X,e_i],X\rangle - \frac{1}{2}\langle [X, e_i], V\rangle \langle Te_i, X \rangle\Big) \\
= &  \frac{1}{\lambda - \mu_{i}}\langle  [X,e_i] , (\nabla_{e_i}A)X \rangle - \frac{1}{2}\langle TX, e_i\rangle^2\\
= & \frac{1}{\lambda - \mu_{i}}\Big(\langle \nabla_{X}e_i , (\nabla_{e_i}A)X  \rangle - \langle \nabla_{e_i}X , (\nabla_{e_i}A)X \rangle\Big) - \frac{1}{2}\langle TX, e_i\rangle^2 \\
= & \frac{1}{\lambda - \mu_{i}}\Big(\langle \nabla_{X}e_i , (\nabla_{e_i}A)X \rangle - \langle \nabla_{e_i}X , (\nabla_{X}A)e_i \rangle\Big) - \frac{1}{2}\langle TX, e_i\rangle^2 \\
= & \frac{1}{\lambda - \mu_{i}}\Big( \lambda\langle \nabla_{X}e_i , \nabla_{e_i}X \rangle - \langle \nabla_{X}e_i , A(\nabla_{e_i}X) \rangle \\
& - \mu_i\langle \nabla_{e_i}X , \nabla_{X}e_i \rangle + \langle \nabla_{e_i}X , A(\nabla_{X}e_i) \rangle \Big) - \frac{1}{2}\langle TX, e_i\rangle^2 \\
= & \langle \nabla_{X}e_i , \nabla_{e_i}X \rangle - \frac{1}{2}\langle TX, e_i\rangle^2.
\end{aligned}
\end{equation}
		
\vskip 4mm
		
Substituting \eqref{term-1-I}, \eqref{term-1-II} and \eqref{term-1-III} into \eqref{term-1}, we obtain
$$
R(X,e_i,e_i,X) = 2\langle \nabla_{X}e_i , \nabla_{e_i}X \rangle - \frac{1}{2}\langle TX, e_i\rangle^2 - \frac{\lambda\mu_{i}}{1-C^2}\big(C - \langle TX, X\rangle\big)\big(C - \langle Te_i, e_i\rangle\big). 
$$
On the other hand, Gauss equation \eqref{eqn:2.2} gives
$$
R(X,e_i,e_i,X) = -\frac{1}{2}\Big(1 + \lang{TX, X}\lang{Te_i,e_i} - \lang{TX,e_i}^2\Big)+ \lambda\mu_i.
$$
Therefore, we get
\begin{equation}\label{eqn:4.1sa1}
\begin{aligned}
&2\langle \nabla_{X}e_i , \nabla_{e_i}X \rangle - \frac{1}{2}\langle TX, e_i\rangle^2 - \frac{\lambda\mu_{i}}{1-C^2}\big(C - \langle TX, X\rangle\big)\big(C - \langle Te_i, e_i\rangle \big)  \\
= & -\frac{1}{2}\big(1 + \langle TX, X\rangle\langle Te_i, e_i\rangle - \langle TX, e_i\rangle^2\big) + \lambda\mu_i.
\end{aligned}
\end{equation}
In the following, we express $\langle \nabla_{X}e_i , \nabla_{e_i}X \rangle$ in terms of the orthonormal principal basis $\{e_k, \bar{V}\}_{k=1}^{m+n-2}$, where $\bar V = \frac{1}{\sqrt{1-C^2}}V$. By Lemma \ref{lemma:2.1}, 
we have
\begin{equation}\label{eqn:4.1sa2}
\begin{aligned}
2\langle \nabla_{X}e_i , \nabla_{e_i}X \rangle = & 2\sum_{k=1}^{m+n-2} \langle \nabla_{X}e_i , e_{k} \rangle\langle \nabla_{e_i}X , e_{k} \rangle + 2\langle \nabla_{X}e_i , \bar V \rangle\langle \nabla_{e_i}X , \bar V \rangle \\
= & 2\sum_{k;\mu_{k}\neq \mu_{i},\mu_{k}\neq \lambda} \frac{\langle (\nabla_{X}A)e_i , e_{k} \rangle\langle (\nabla_{e_i}A)X , e_{k} \rangle}{(\mu_i - \mu_{k})(\lambda - \mu_{k})} + \frac{2\lambda\mu_i\langle TX, e_i\rangle^2}{1-C^2},
\end{aligned}
\end{equation}
where we have used that $\nabla_X e_i\perp V_\lambda$ and $\nabla_{e_i} X \perp V_{\mu_i}$.
		
\vskip 4mm
		
Now, using \eqref{eqn:4.1sa1}, \eqref{eqn:4.1sa2} and Codazzi equation \eqref{eqn:2.3}, we obtain
\begin{equation}\label{eqn:4.1sa3}
\begin{aligned}
& 2\sum_{k;\mu_{k}\neq \mu_{i},\mu_{k}\neq \lambda} \frac{\langle (\nabla_{X}A)e_i , e_{k} \rangle^2}{(\mu_i - \mu_{k})(\lambda - \mu_{k})}
= 2 \sum_{k;\mu_{k}\neq \mu_{i},\mu_{k}\neq \lambda} \frac{\langle (\nabla_{X}A)e_i , e_{k} \rangle\langle (\nabla_{e_i}A)X , e_{k} \rangle}{(\mu_i - \mu_{k})(\lambda - \mu_{k})} \\
= &  \left( \lambda\mu_i - \frac{1}{2} - \frac{1}{2} \langle TX, X\rangle\langle Te_i, e_i\rangle + \frac{\lambda\mu_i}{1-C^2}(C - \langle TX, X\rangle)(C - \langle Te_i, e_i\rangle) \right)  \\
&\quad\quad - \frac{2\lambda\mu_i - (1-C^2)}{1-C^2}\langle TX , e_i\rangle^2.
\end{aligned}
\end{equation}
Dividing \eqref{eqn:4.1sa3} by $(\lambda - \mu_i)$ and summing over $i$, we have
\begin{equation}\label{eqn:4.1sa4}
\begin{aligned}
2\sum_{i;\mu_i\neq \lambda}&\Big(\sum_{k;\mu_{k}\neq \mu_{i},\mu_{k}\neq \lambda} \frac{\langle (\nabla_{X}A)e_i , e_{k} \rangle^2}{(\lambda - \mu_i)(\mu_i - \mu_{k})(\lambda - \mu_{k})}\Big)\\
= & \sum_{i=1, \mu_i\neq\lambda}^{m+n-2}\frac{1}{\lambda-\mu_i}
\Big\{\lambda\mu_i\big(1+\frac{(C-\langle TX,X\rangle)(C-\langle Te_i , e_i\rangle)-2\langle TX,e_i\rangle^2}{1-C^2}\big)\\
&\quad \quad -\frac{1}{2}\big(1+\langle TX,X\rangle\langle Te_i, e_i\rangle-2\langle TX,e_i\rangle^2\big)\Big\}.
\end{aligned}
\end{equation}
Since the summand on the left side of \eqref{eqn:4.1sa4} is skew-symmetric in $\{i,k\}$, 
the value of the sum is $0$, and so the sum on the right is $0$. 
\end{proof}
	
\begin{remark}\label{rem:4.1a}
The proof of Lemma \ref{lemma:car-general} is inspired by Cartan's formulas for
Hopf hypersurfaces of $\mathbb{C}H^m$ with constant principal curvatures, which 
are obtained by Berndt \cite{B}. For more details of Cartan's formulas, one can also see \cite{CR15, LTY25,LTY26}.
\end{remark}
	
Now, from \eqref{eqn:4.1sa3}, we have the following key lemma:
	
\begin{lemma}\label{lemma:car-before}
Let $M$ be a hypersurface of $\mathbb{H}^m \times \mathbb{H}^n$ ($m\geq3, n\geq2$) with constant principal curvatures and constant product angle function $C \neq \pm 1$. Let $X, Y\in V^{\perp}$ be unit principal vector fields with associated principal curvatures $\lambda , \mu$ ($\mu \neq \lambda$). For any principal orthonormal basis $\{e_i\}_{i=1}^{m+n-2}$ of $V^{\perp}$ satisfying $Ae_i=\mu_ie_i$, we have
\begin{equation}\label{eqn:car-before}
\begin{aligned}
2\sum_{\mu_i\neq\lambda,\mu}\frac{\lang{(\nabla_{e_i} A)X,Y}^2}{(\lambda-\mu_i)(\mu-\mu_i)}
=&\lambda\mu\Big(1+\frac{(C-\langle TX,X\rangle)(C-\langle TY,Y\rangle)-2\langle TX,Y\rangle^2}{1-C^2}\Big)\\
&\quad \quad -\frac{1}{2}\big(1+\langle TX,X\rangle\langle TY,Y\rangle-2\langle TX,Y\rangle^2\big). 
\end{aligned}
\end{equation}
In particular, when $C=0$, we have
\begin{equation}\label{eqn:car-0-before}
2\sum_{\mu_i\neq\lambda,\mu}\frac{\lang{(\nabla_{e_i} A)X,Y}^2}{(\lambda-\mu_i)(\mu-\mu_i)} = \br{\lambda\mu - \frac{1}{2}}\Big(1+\langle TX,X\rangle\langle TY,Y\rangle-2\langle TX,Y\rangle^2\Big).
\end{equation}
\end{lemma}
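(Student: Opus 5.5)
The identity is essentially \eqref{eqn:4.1sa3} from the proof of Lemma \ref{lemma:car-general}, rewritten with $X$ in the role of $X$ and $Y$ in the role of $e_i$. The plan is to rerun that derivation for a fixed pair of unit principal fields $X\in V_\lambda$, $Y\in V_\mu$ with $\mu\neq\lambda$, and then convert the left-hand side into the form displayed in \eqref{eqn:car-before}.

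\textbf{Step 1: reuse the curvature computation.} Extend $X,Y$ to local principal fields and take $e_i$ to be $Y$ in the argument of Lemma \ref{lemma:car-general}. Compute $R(X,Y,Y,X)$ via the terms $(I)$, $(II)$, $(III)$ of \eqref{term-1}. Term $(I)$ uses Lemma \ref{lemma:4.3}(2) for the $V$-components of $\nabla_YY$ and $\nabla_XX$. Term $(II)$ uses $\nabla_XY\perp V_\lambda$. Term $(III)$ uses \eqref{eqn:4.1} together with Codazzi \eqref{eqn:2.3}. Equating the result with the Gauss equation \eqref{eqn:2.2} gives \eqref{eqn:4.1sa1} with $e_i=Y$.

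\textbf{Step 2: expand $2\langle\nabla_XY,\nabla_YX\rangle$ in a frame.} Use the orthonormal frame $\{e_k,\bar V\}$. Components along $V_\lambda$ and $V_\mu$ vanish by Lemma \ref{lemma:4.3}(1). Each remaining component along $e_k$ with $\mu_k\neq\lambda,\mu$ is converted by \eqref{eqn:4.1}:
\[
\langle\nabla_XY,e_k\rangle=\frac{\langle(\nabla_XA)Y,e_k\rangle}{\mu-\mu_k},\qquad
\langle\nabla_YX,e_k\rangle=\frac{\langle(\nabla_YA)X,e_k\rangle}{\lambda-\mu_k}.
\]
The $\bar V$-components come from Lemma \ref{lemma:2.1}: $\langle\nabla_XY,V\rangle=\mu\langle TX,Y\rangle$ (using $\langle X,Y\rangle=0$), and symmetrically $\langle\nabla_YX,V\rangle=\lambda\langle TX,Y\rangle$. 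Their product, divided by $1-C^2$, gives $\lambda\mu\langle TX,Y\rangle^2/(1-C^2)$, matching \eqref{eqn:4.1sa2}.

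\textbf{Step 3: rewrite the left-hand side.} By Codazzi, $(\nabla_XA)Y-(\nabla_YA)X=-\tfrac12(\langle X,V\rangle TY-\langle Y,V\rangle TX)=0$, since $X,Y\perp V$. Hence the two factors agree. Next apply the symmetry of $\nabla_{e_k}A$ and Codazzi once more (now $e_k,X\perp V$):
\[
\langle(\nabla_XA)Y,e_k\rangle=\langle(\nabla_XA)e_k,Y\rangle=\langle(\nabla_{e_k}A)X,Y\rangle.
\]
Also $(\mu-\mu_k)(\lambda-\mu_k)=(\lambda-\mu_k)(\mu-\mu_k)$. So the sum becomes exactly the left side of \eqref{eqn:car-before}, with $k$ renamed to $i$.

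\textbf{Step 4: solve for the sum and simplify.} Solve the resulting version of \eqref{eqn:4.1sa1} for the sum, moving the $\langle TX,Y\rangle^2$ terms to the right. The combination $-\tfrac12\langle TX,Y\rangle^2+\tfrac12\langle TX,Y\rangle^2$ cancels. What remains is
\[
-\frac{2\lambda\mu}{1-C^2}\langle TX,Y\rangle^2+\langle TX,Y\rangle^2,
\]
which regroups into the stated right-hand side. Setting $C=0$ turns the factor $(C-\langle TX,X\rangle)(C-\langle TY,Y\rangle)$ into $\langle TX,X\rangle\langle TY,Y\rangle$ and gives \eqref{eqn:car-0-before}.

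The step most likely to cause trouble is the sign and coefficient bookkeeping for the $\langle TX,Y\rangle^2$ terms. They come from three places: the Gauss term $-\tfrac12\langle TX,Y\rangle^2$, the Codazzi correction in $(III)$, and the $\bar V$-component in Step 2. To guard against errors, I would check the final identity on $M^C_{1,-1}$, where $T=\pm\mathrm{id}$ on the two eigenspaces. There the left side is zero and the right side becomes
\[
\lambda\mu\Bigl(1+\frac{(C-1)(C+1)}{1-C^2}\Bigr)-\tfrac12(1-1)=0,
\]
which is consistent.
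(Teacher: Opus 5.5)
Your proposal is correct and follows the paper's route exactly: the lemma is \eqref{eqn:4.1sa3} with $e_i=Y$, after using Codazzi (all vectors lie in $V^\perp$) to rewrite $\langle(\nabla_XA)Y,e_k\rangle=\langle(\nabla_{e_k}A)X,Y\rangle$. There are two harmless slips: in fact $\langle\nabla_XY,V\rangle=\lambda\langle TX,Y\rangle$ and $\langle\nabla_YX,V\rangle=\mu\langle TX,Y\rangle$, which leaves the product unchanged, and the two $\tfrac12\langle TX,Y\rangle^2$ contributions add to $\langle TX,Y\rangle^2$ rather than cancel, which is what your final expression already uses.
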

	
If the number of distinct principal curvatures of $\sigma(A|_{V^\perp})$ is $2$, then we have the following result. 
	
\begin{lemma}\label{lemma:car-2}
Let $M$ be a hypersurface of $\mathbb{H}^m\times \mathbb{H}^n$ ($m\geq3, n\geq2$) with constant principal curvatures and constant product angle function $C\neq\pm1$. 
Assume that the number of distinct principal curvatures of $\sigma(A|_{V^\perp})$ is $2$, i.e. $\sigma(A|_{V^\perp})=\{\lambda,\mu\}$, $\lambda\neq \mu$. Then for any $X\in V^{\perp}$ and $Y\in V^{\perp}$ being unit principal vector fields with associated principal curvatures $\lambda$ and $\mu$ respectively, we have 
\begin{equation}\label{eqn:car-2}
\begin{aligned}
\lambda\mu\Big(1+\tfrac{(C-\langle TX,X\rangle)(C-\langle TY,Y\rangle)-2\langle TX,Y\rangle^2}{1-C^2}\Big)-\frac{1}{2}\big(1+\langle TX,X\rangle\langle TY,Y\rangle-2\langle TX,Y\rangle^2\big)=0. 
\end{aligned}
\end{equation}
In particular, when $C=0$, we have
\begin{equation}\label{eqn:car-0-2}
\br{\lambda\mu - \frac{1}{2}}\Big(1+\langle TX,X\rangle\langle TY,Y\rangle-2\langle TX,Y\rangle^2\Big) = 0.
\end{equation}
\end{lemma}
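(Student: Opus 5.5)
This follows directly from Lemma \ref{lemma:car-before}, applied with $X\in V_\lambda$ and $Y\in V_\mu$ unit principal vector fields. The left-hand side of \eqref{eqn:car-before} is a sum over indices $i$ with $\mu_i\neq\lambda,\mu$, where $\{e_i\}_{i=1}^{m+n-2}$ is a principal orthonormal basis of $V^\perp$. Under the hypothesis $\sigma(A|_{V^\perp})=\{\lambda,\mu\}$, every $\mu_i$ equals either $\lambda$ or $\mu$, so this sum is empty and hence zero. The right-hand side of \eqref{eqn:car-before} must then vanish, and that is exactly \eqref{eqn:car-2}.

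One point to check is whether the sum in \eqref{eqn:car-before} genuinely ranges only over the basis of $V^\perp$, or whether the direction $\bar V=V/\sqrt{1-C^2}$ (with principal curvature $0$) should also contribute. Recall that \eqref{eqn:car-before} comes from \eqref{eqn:4.1sa3}, which in turn was obtained by expanding $\langle\nabla_X e_i,\nabla_{e_i}X\rangle$ in the frame $\{e_k,\bar V\}$. There the $\bar V$-component was treated separately and produced the explicit term $\frac{2\lambda\mu_i\langle TX,e_i\rangle^2}{1-C^2}$. So the remaining sum really is only over $e_k\in V^\perp$ with $\mu_k\neq\lambda,\mu_i$. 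Even if $0\in\{\lambda,\mu\}$ or $0\notin\{\lambda,\mu\}$, the $V$-direction has already been absorbed, and the hypothesis about $\sigma(A|_{V^\perp})$ is exactly what is needed.

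Finally, the case $C=0$ gives \eqref{eqn:car-0-2} by substitution. With $C=0$, the first bracket becomes $1+\langle TX,X\rangle\langle TY,Y\rangle-2\langle TX,Y\rangle^2$, the same as the second bracket, so the whole expression factors as $(\lambda\mu-\frac12)(1+\langle TX,X\rangle\langle TY,Y\rangle-2\langle TX,Y\rangle^2)$. This is exactly \eqref{eqn:car-0-before} with its left side equal to zero. There is no real obstacle here: the only delicate point is the bookkeeping of the $V$-direction, and it is settled by the derivation in Lemma \ref{lemma:car-general}.
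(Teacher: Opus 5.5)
Your proposal is correct and is essentially the paper's argument. The paper goes back to the proof of Lemma \ref{lemma:car-general}: when $\sigma(A|_{V^\perp})=\{\lambda,\mu\}$, only the $\bar V$-component survives, giving $\langle\nabla_XY,\nabla_YX\rangle=\frac{\lambda\mu\langle TX,Y\rangle^2}{1-C^2}$, and it combines this with \eqref{eqn:4.1sa1}. That is exactly the empty-sum case of \eqref{eqn:car-before} that you invoke, and your handling of the $\bar V$-direction and of the $C=0$ factorization are both right.
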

\begin{proof}
According to the assumption that the number of distinct principal curvatures of $\sigma(A|_{V^\perp})$ is $2$, then following the
proof of Lemma \ref{lemma:car-general}, we have 
$$
\langle\nabla_X Y,\nabla_Y X\rangle=\frac{\lambda\mu\langle TX, Y\rangle^2}{1-C^2}, 
$$
and
$$
\begin{aligned}
&2\langle \nabla_{X}Y, \nabla_{Y}X \rangle - \frac{1}{2}\langle TX, Y\rangle^2 - \frac{\lambda\mu}{1-C^2}\big(C - \langle TX, X\rangle\big)\big(C - \langle TY, Y\rangle \big)  \\
= & -\frac{1}{2}\big(1 + \langle TX, X\rangle\langle TY, Y\rangle - \langle TX, Y\rangle^2\big) + \lambda\mu.
\end{aligned}
$$
Combining these two equations, we obtain \eqref{eqn:car-2}. 
\end{proof}

\begin{lemma}\label{lemma:car-seperate}
Let $M$ be a hypersurface of $\mathbb{H}^m\times \mathbb{H}^n$ ($m\geq3, n\geq2$) with constant principal curvatures and constant product angle function $C\neq\pm1$. Let $X\in V^{\perp}$ be a unit principal vector at a point $p$ with associated principal curvature $\lambda$. If $\lambda\neq 0$ and $C\neq 0$ (resp. $C = 0$, $\lambda\neq 0$ and $\lambda^2 \neq \frac{1}{2}$), then for any principal orthonormal basis $\{e_i\}_{i=1}^{m+n-2}$ of $V^{\perp}$ satisfying $Ae_i=\mu_ie_i$, we have 
\begin{equation}\label{eqn:car-seperate-1}
\begin{aligned}
\sum_{i=1, \mu_i\neq\lambda}^{m+n-2}\frac{1}{\lambda-\mu_i} \Big \{ & \lambda\mu_i\big(1+\frac{(C-\langle TX,X\rangle) (C-\langle Te_i,e_i\rangle)}{1-C^2}\big)-\frac{1}{2} \big( 1 + \langle TX,X\rangle\langle Te_i, e_i\rangle\big)\Big\}=0, 
\end{aligned}
\end{equation}
\begin{equation}\label{eqn:car-seperate-2}
\begin{aligned}
\sum_{i=1, \mu_i\neq\lambda}^{m+n-2}\frac{\lambda\mu_i-\frac{1}{2}(1-C^2)}{\lambda-\mu_i}\langle TX,e_i\rangle^2=0.
\end{aligned}
\end{equation}
In particular, when $C = 0$, $\lambda\neq 0$, and $\lambda^2\neq \frac{1}{2}$, we have
\begin{equation}\label{eqn:car-0-seperate-1}
\sum_{i=1, \mu_i\neq\lambda}^{m+n-2}\frac{\lambda\mu_i-\frac{1}{2}}{\lambda-\mu_i}
=0, 
\end{equation}
		
\begin{equation}\label{eqn:car-0-seperate-2}
\sum_{i=1, \mu_i\neq\lambda}^{m+n-2}\frac{\lambda\mu_i-\frac{1}{2}}{\lambda-\mu_i}\langle TX,e_i\rangle^2=0.
\end{equation}
\end{lemma}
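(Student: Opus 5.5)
The plan is to show that \eqref{eqn:car-general} splits exactly into the two sums \eqref{eqn:car-seperate-1} and \eqref{eqn:car-seperate-2}, and then to prove separately that the second sum vanishes.

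\textbf{Step 1: the splitting.} In \eqref{eqn:car-general}, the terms containing $\langle TX,e_i\rangle^2$ have coefficient
$$
\frac{1}{\lambda-\mu_i}\Big(-\frac{2\lambda\mu_i}{1-C^2}+1\Big)=-\frac{2}{1-C^2}\,\frac{\lambda\mu_i-\frac12(1-C^2)}{\lambda-\mu_i}.
$$
Write $S_1(X)$ for the left side of \eqref{eqn:car-seperate-1} and $Q(X)$ for the left side of \eqref{eqn:car-seperate-2}. Then \eqref{eqn:car-general} reads
$$
S_1(X)-\frac{2}{1-C^2}\,Q(X)=0.
$$
So it suffices to prove $Q(X)=0$; the vanishing of $S_1(X)$ then follows. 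The $C=0$ specializations \eqref{eqn:car-0-seperate-1} and \eqref{eqn:car-0-seperate-2} will come out by substituting $C=0$ and $\langle TX,X\rangle=0$.

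\textbf{Step 2: $\langle TX,X\rangle$ is constant on $V_\lambda$.} Under either hypothesis, Remark \ref{rem:denominater} gives $\lambda^2-\frac12(1-C^2)\neq0$. Lemma \ref{lemma:4.1}(1) then shows that the $V_\lambda$-component of $TX$ equals $cX$ for every $X\in V_\lambda$, where
$$
c=\frac{C\lambda^2}{\lambda^2-\frac12(1-C^2)}.
$$
In particular $\langle TX,X\rangle=c$ is constant for every unit $X\in V_\lambda$, and $c=0$ when $C=0$. Extend $X$ to a local unit vector field in $V_\lambda$, which is possible since the principal curvatures are constant.

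\textbf{Step 3: rewriting $Q$.} Lemma \ref{lemma:4.1}(2) gives
$$
\Big(\lambda\mu_i-\tfrac12(1-C^2)\Big)\langle TX,e_i\rangle=(\lambda-\mu_i)\langle\nabla_VX,e_i\rangle,
$$
so each summand of $Q$ equals $\langle TX,e_i\rangle\langle\nabla_VX,e_i\rangle$. Hence $Q(X)$ is the inner product of $\nabla_VX$ and $TX$, restricted to the part of $V^\perp$ orthogonal to $V_\lambda$. The remaining components contribute nothing:
\begin{itemize}
\item $\langle\nabla_VX,V\rangle=-\langle X,\nabla_VV\rangle=0$ by Corollary \ref{coro:2.2};
\item $\langle TX,V\rangle=\langle X,PV\rangle=-C\langle X,V\rangle=0$;
\item on $V_\lambda$, $TX$ has component $cX$, and $\langle\nabla_VX,X\rangle=0$ because $|X|=1$.
\end{itemize}
Therefore $Q(X)=\langle\nabla_VX,TX\rangle$.

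\textbf{Step 4: the key identity (main obstacle).} The remaining work is to show that $\langle\nabla_VX,TX\rangle=\tfrac12 V\langle TX,X\rangle$, which is $0$ by Step 2. For this I would compute $\nabla T$ from $\bar\nabla P=0$ and the Gauss--Weingarten formulas. I expect
$$
(\nabla_ZT)Y=\langle Y,V\rangle AZ+\langle AZ,Y\rangle V.
$$
Taking $Z=V$ and using $AV=0$ gives $\nabla_V(TX)=T\nabla_VX$. Since $T$ is symmetric,
$$
V\langle TX,X\rangle=\langle T\nabla_VX,X\rangle+\langle TX,\nabla_VX\rangle=2\langle\nabla_VX,TX\rangle=2Q(X).
$$
This forces $Q(X)=0$, which is \eqref{eqn:car-seperate-2}. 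Step 1 then yields \eqref{eqn:car-seperate-1}.

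The step I would check most carefully is the formula for $\nabla T$ and the claim that $(\nabla_VT)X=0$ for $X\perp V$; the rest is bookkeeping.
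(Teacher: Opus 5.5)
Your proof is correct. Steps 1--3 match the paper's bookkeeping: the paper also reduces everything to $\langle\nabla_V X,TX\rangle=0$, expands that quantity in the principal basis via Lemma \ref{lemma:4.1}(2), and then recovers \eqref{eqn:car-seperate-1} from \eqref{eqn:car-general}.

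The routes differ in Step 4, in how $\langle\nabla_V X,TX\rangle=0$ is established.

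\textbf{The paper's route.} It computes the sectional curvature $R(X,\bar V,\bar V,X)$ twice. The first computation is from the connection, split into three terms handled with the Codazzi equation and the constancy of $\langle\nabla_X V,X\rangle$. The second is from the Gauss equation. Comparing the two gives $\langle\nabla_V X,\nabla_X V\rangle=0$. Since $\nabla_X V=\lambda(CX-TX)$, this says $\lambda\langle\nabla_V X,TX\rangle=0$, and dividing by $\lambda$ is where $\lambda\neq0$ enters.

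\textbf{Your route.} You differentiate $PX=TX+\langle X,V\rangle N$ using $\bar\nabla P=0$. Your formula $(\nabla_Z T)Y=\langle Y,V\rangle AZ+\langle AZ,Y\rangle V$ is correct; its normal component reproduces Lemma \ref{lemma:2.1}. Because $AV=0$, it gives $\nabla_V T=0$, and hence $2\langle\nabla_V X,TX\rangle=V\langle TX,X\rangle=V(c)=0$.

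\textbf{What each buys.} Your argument is purely first order. It avoids the curvature computation and the Codazzi manipulations entirely, and it never divides by $\lambda$. So it also gives \eqref{eqn:car-seperate-2}, and hence \eqref{eqn:car-seperate-1}, for $\lambda=0$, where $c=0$ and $\lambda^2-\frac12(1-C^2)\neq0$ hold automatically since $C\neq\pm1$. The same device already appears in the paper, in \eqref{eqn:5.1a23}. The paper's route mainly buys uniformity with the Cartan-type derivation of \eqref{eqn:car-general}. Its intermediate identity $\langle\nabla_V X,\nabla_X V\rangle=0$ is no extra information, since it follows from yours via $\langle\nabla_V X,\nabla_X V\rangle=-\lambda\langle\nabla_V X,TX\rangle$.
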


\begin{proof}
Consider the sectional curvature $R(X, \bar V, \bar V, X)$, where $\bar V = \frac{V}{\sqrt{1-C^2}}$. The definition of Riemannian curvature gives
$$
(1-C^2)R(X, \bar V , \bar V , X) = \underbrace{\langle\nabla_{X}\nabla_{V} V,X\rangle}_{(I)}\underbrace{ - \langle\nabla_{V}\nabla_{X} V,X\rangle}_{(II)}\underbrace{- \langle\nabla_{[X,V]} V,X\rangle}_{(III)}.
$$
By Corollary \ref{coro:2.2}, we have $\nabla_V V = 0$, hence $(I)= 0$. 
In the following, we compute the terms (II) and (III). 
		
Recall that in Lemma \ref{lemma:4.1}, we have 
\begin{equation}\label{eqn:4.1ff1}
\left(\lambda^2-\frac{1}{2}(1-C^2)\right)\langle TX,Y\rangle=C\lambda^2\langle X,Y  \rangle, \ \ \forall \ X,Y\in V_\lambda. 
\end{equation}
As we discussed in Remark \ref{rem:denominater}, when $C\neq 0$ (or $C = 0$ and $\lambda^2\neq \frac{1}{2}$), it holds that $\lambda^2-\frac{1}{2}(1-C^2)\neq 0$. Then by \eqref{eqn:4.1ff1}, we have  
$$
\langle\nabla_{X} V,X\rangle = \lambda\lang{CX - TX, X} = \lambda\left(C - \frac{C\lambda^2}{\lambda^2-\frac{1}{2}(1-C^2)} \right),
$$
which means that $\langle\nabla_{X} V,X\rangle$ is a constant. As a consequence, the second term $(II)$ is
\begin{equation}\label{eqn:4.1ffs1}
(II)= - V(\langle\nabla_{X} V,X\rangle)+ \langle\nabla_{X} V,\nabla_{V} X\rangle = \langle\nabla_{X} V,\nabla_{V} X\rangle.
\end{equation}
From Corollary \ref{coro:2.2}, $\nabla_V V = 0$, we have 
$\langle [X, V], V \rangle = \lang{\nabla_X V, V} - \lang{\nabla_V X, V} = 0$. Then by \eqref{eqn:4.1} and Codazzi equation \eqref{eqn:2.3}, the third term $(III)$ can be expressed as
\begin{equation}\label{eqn:4.1ff2}
\begin{aligned}
(III)=& - \langle\nabla_{[X,V]} V,X\rangle = \frac{1}{\lambda}\langle (\nabla_{[X,V]}A) V,X \rangle\\
= & \frac{1}{\lambda}\left(\langle ( \nabla_{V}A) [X,V],X \rangle + \frac{1}{2}(1-C^2)\left\langle T[X,V],X \right\rangle  \right) \\
= & \frac{1}{\lambda}\left(\left\langle [X,V], (\nabla_{V}A) X \right\rangle + \frac{1}{2}(1-C^2)\langle [X,V], TX\rangle\right).
\end{aligned}
\end{equation}
On the other hand, by using Codazzi equation \eqref{eqn:2.3}, we have
\begin{equation}\label{eqn:4.1ff3}
\begin{aligned}
& \left\langle [X,V], (\nabla_{V}A) X \right\rangle = \langle \nabla_{X} V, (\nabla_{V}A) X \rangle - \langle \nabla_{V}X, (\nabla_{V}A) X \rangle \\
= & \langle \nabla_{X}V, (\nabla_{V}A) X \rangle - \langle \nabla_{V}X, (\nabla_{X}A) V \rangle + \frac{1}{2}(1-C^2)\langle \nabla_{V}X, TX \rangle \\
= & \lambda\langle \nabla_{X}V, \nabla_{V}X \rangle - \langle \nabla_{X}V, A(\nabla_{V}X) \rangle+ \langle \nabla_{V}X, A(\nabla_{X} V) \rangle+ \frac{1}{2}(1-C^2)\langle \nabla_{V}X, TX \rangle\\
= & \lambda \langle \nabla_{X} V, \nabla_{V}X \rangle + \frac{1}{2}(1-C^2)\langle \nabla_{V}X,TX \rangle.
\end{aligned}
\end{equation}
Hence by \eqref{eqn:4.1ff2} and \eqref{eqn:4.1ff3}, we get 
\begin{equation}\label{eqn:4.1ff4}
\begin{aligned}
(III)= & \frac{1}{\lambda}\left(\left\langle [X,V], (\nabla_{V}A) X \right\rangle + \frac{1}{2}(1-C^2)\langle [X,V], TX\rangle\right)\\
= & \frac{1}{\lambda}\br{\lambda \langle \nabla_{X} V, \nabla_{V}X \rangle + \frac{1}{2}(1-C^2)\langle \nabla_{V}X,TX \rangle + \frac{1}{2}(1-C^2)\langle [X,V], TX\rangle}\\
= & \langle \nabla_{X} V, \nabla_{V}X \rangle + \frac{1}{2\lambda}(1-C^2)\langle \nabla_{X}V,TX \rangle\\
= & \langle \nabla_{X}V, \nabla_{V}X \rangle + \frac{1}{2}(1-C^2)(C\langle TX, X\rangle - 1).
\end{aligned}
\end{equation}
		
\vskip 4mm
		
Combining $(I)=0$, \eqref{eqn:4.1ffs1} and \eqref{eqn:4.1ff4}, we obtain
$$
(1-C^2)R(X, \bar V , \bar V , X) = 2\langle \nabla_{X}V, \nabla_{V}X \rangle + \frac{1}{2}(1-C^2)(C\langle TX, X\rangle - 1).
$$
On the other hand, Gauss equation implies that 
$$
R(X, \bar V, \bar V, X)  = -\frac{1}{2}\left( 1 - C \langle TX, X\rangle \right).
$$
Thus, the preceding two equations yield
\begin{equation}\label{eqn:4.1ff5}
\langle \nabla_{V}X, \nabla_{X}V\rangle = 0.
\end{equation}
By Lemma \ref{lemma:2.1}, $\nabla_X V = CAX - TAX$ and \eqref{eqn:4.1ff5}, we get 
$$
0 = \langle \nabla_{V}X, \nabla_{X}V\rangle = \lambda\langle \nabla_{V} X, CX - TX\rangle = -\lambda\langle \nabla_{V} X,  TX\rangle.
$$
It follows from $\lambda\neq0$ that $\langle \nabla_{V} X,  TX\rangle=0$. By \eqref{eqn:4.1ff1} and $\lambda^2-\frac{1}{2}(1-C^2)\neq 0$, we know that 
$$
\langle TX, Y\rangle=0, \ \text{for any $X,Y\in V_\lambda$ and $X\perp Y$}.
$$
Hence, for any unit vector $X\in V_\lambda$, we have 
$$
\begin{aligned}
0 = & \langle \nabla_{V}X, TX\rangle = \sum_{i = 1,\mu_i\neq \lambda}^{m+n-2}\langle \nabla_{V}X, e_i\rangle\langle TX, e_i\rangle + \langle \nabla_{V}X, X\rangle\langle TX, X\rangle + \langle \nabla_{V}X, \bar V\rangle\langle TX, \bar V\rangle \\
= & \sum_{i = 1,\mu_i\neq \lambda}^{m+n-2}\langle \nabla_{V}X, e_i\rangle\langle TX, e_i\rangle = \sum_{i = 1,\mu_i\neq \lambda}^{m+n-2} \frac{\lambda\mu_i - \frac{1}{2}(1-C^2)}{\lambda - \mu_i}\langle TX, e_i\rangle^2,
\end{aligned}
$$
where we have used Lemma \ref{lemma:4.1} (2). Thus we get \eqref{eqn:car-seperate-2}. Now, combining \eqref{eqn:car-seperate-2} and \eqref{eqn:car-general}, we can get \eqref{eqn:car-seperate-1}. 

In particular, when $C = 0$, $\lambda\neq 0$ and $\lambda^2\neq \frac{1}{2}$, \eqref{eqn:car-seperate-2} and \eqref{eqn:car-seperate-1} reduce to 
\begin{equation}\label{eqn:4.s3}
\sum_{i=1, \mu_i\neq\lambda}^{m+n-2}\frac{\lambda\mu_i-\frac{1}{2}}{\lambda-\mu_i}\br{1+\langle TX,X\rangle\langle Te_i,e_i\rangle}= 0,\quad \sum_{i=1, \mu_i\neq\lambda}^{m+n-2}\frac{\lambda\mu_i-\frac{1}{2}}{\lambda-\mu_i}\langle TX,e_i\rangle^2=0. 
\end{equation}
Noticing that when $C = 0$, $\lambda\neq 0$ and $\lambda^2\neq \frac{1}{2}$, by Corollary \ref{coro:4.2}, it holds that $TV_\lambda \perp V_\lambda$, which means that $\lang{TX, X} = 0$ for $X\in V_\lambda$. Therefore, \eqref{eqn:4.s3} can further reduce to 
\eqref{eqn:car-0-seperate-1} and \eqref{eqn:car-0-seperate-2}. 
\end{proof}

\section{Proof of Theorem \ref{thm:1.2}}\label{sect:5}
	
Let $M$ be a hypersurface of $\mathbb{H}^m\times \mathbb{H}^n$ with at most two distinct constant principal curvatures. If $g=1$, then $M$ is a totally  
umbilical hypersurface. According to \cite{Toj13}, we know 
that $M$ is either an open part of $\mathbb{H}^{m-1}\times \mathbb{H}^n$ or an open part of $\mathbb{H}^m\times \mathbb{H}^{n-1}$. In the following, we only need to consider $g=2$. 
We assume that the principal curvatures are $\lambda$ and $\mu$. 
Let $\tilde{V}_\lambda$ and $\tilde{V}_\mu$ be the principal curvature eigenspaces on $M$, respectively. Then ${\rm dim} \tilde{V}_\lambda+{\rm dim} \tilde{V}_\mu=m+n-1$. 
Note that, for any $X,Y,Z\in \tilde{V}_\lambda$ (or $X,Y,Z\in \tilde{V}_\mu$), we always have 
\begin{equation}\label{eqn:5.1a1g1}
\langle (\nabla_{X} A)Y-(\nabla_Y A)X,Z \rangle=0.
\end{equation}
	
Let $M=M_1\cup M_2$, where 
$$
M_1=\{x\in M| C=\pm1\ \text{at point $x$}\}, \ \ 
M_2=\{x\in M| C\neq\pm1\ \text{at point $x$}\}. 
$$
Then $M_2$ is an open subset of $M$, and there exists the vector field $V$ on $M_2$. Furthermore, let 
$M_2=M_{21}\cup M_{22}$, where 
$$
\begin{aligned}
&M_{21}=\{x\in M_2| \text{ $V$ is a principal direction at point $x$}\}, \\
&M_{22}=\{x\in M_2| \text{ $V$ has two nontrivial projections on principal curvature eigenspaces at point $x$}\}. 
\end{aligned}
$$

We choose any given connected open subset $\Omega$ of $M_{22}$, 
and we can assume that 
$$
\bar{V}=\frac{V}{\sqrt{1-C^2}}=\cos(\theta) X_1+\sin(\theta) Y_1 
$$
on $\Omega$, where $\theta$ is a function on $\Omega$ such that $\cos(\theta)\sin(\theta)\neq0$, $X_1\in \tilde{V}_{\lambda}$ 
and $Y_1\in \tilde{V}_{\mu}$ are unit vector fields, respectively. 
Let $\bar{X}=\sin(\theta) X_1-\cos(\theta) Y_1$, then $\bar{X}\in V^\perp$ is a unit vector field, and we have 
\begin{equation}\label{eqn:5.1a1s1}
A\bar{V}=\cos(\theta) \lambda X_1+\sin(\theta) \mu Y_1,\ \ 
A\bar{X}=\sin(\theta) \lambda X_1-\cos(\theta) \mu Y_1. 
\end{equation}
	
If ${\rm dim}\tilde{V}_\lambda=1$ and ${\rm dim}\tilde{V}_\mu=m+n-2$, then for any vector fields $Y,Z\in \tilde{V}_\mu$ such that $Y,Z\perp Y_1$, using 
Codazzi equation \eqref{eqn:2.3}, we obtain 
\begin{equation}\label{eqn:5.1a1z1}
\begin{aligned}
\langle (\nabla_{Y_1} A)Y-(\nabla_Y A)Y_1,Z \rangle
&=-\frac{1}{2}(\langle Y_1,V \rangle\langle TY,Z \rangle-\langle Y,V \rangle\langle TY_1,Z \rangle)\\
&=-\frac{1}{2}\sqrt{1-C^2}\sin(\theta)\langle TY,Z \rangle.
\end{aligned}
\end{equation}
It follows from \eqref{eqn:5.1a1g1} that $T(\tilde{V}_\mu\ominus\mathbb{R}Y_1)\perp (\tilde{V}_\mu\ominus\mathbb{R}Y_1)$. Since $(\tilde{V}_\mu\ominus\mathbb{R}Y_1)\subset V^\perp$ and $TV^\perp=V^\perp$, we have 
$T(\tilde{V}_\mu\ominus\mathbb{R}Y_1)\subset (V^\perp\ominus(\tilde{V}_\mu\ominus\mathbb{R}Y_1))$, which contradicts ${\rm dim}(\tilde{V}_\mu\ominus\mathbb{R}Y_1)=m+n-3\geq2$ 
and ${\rm dim}(V^\perp\ominus(\tilde{V}_\mu\ominus\mathbb{R}Y_1))=1$. 
If ${\rm dim}\tilde{V}_\lambda=m+n-2$ and ${\rm dim}\tilde{V}_\mu=1$, we can also get a contradiction. Thus, we always have that ${\rm dim}\tilde{V}_\lambda\geq2$ and ${\rm dim}\tilde{V}_\mu\geq2$ on $\Omega$. 
	
For any vector field $Y\in \tilde{V}_\lambda$ such that $Y\perp X_1$, and any vector field $Z\in \tilde{V}_\lambda$, using Codazzi equation \eqref{eqn:2.3}, we obtain
\begin{equation}\label{eqn:5.1a1}
\begin{aligned}
\langle (\nabla_{X_1} A)Y-(\nabla_Y A)X_1,Z \rangle
&=-\frac{1}{2}(\langle X_1,V \rangle\langle TY,Z \rangle-\langle Y,V \rangle\langle TX_1,Z \rangle)\\
&=-\frac{1}{2}\sqrt{1-C^2}\cos(\theta)\langle TY,Z \rangle.
\end{aligned}
\end{equation}
It follows from \eqref{eqn:5.1a1g1}, $\cos(\theta)\sin(\theta)\neq0$ and $C\neq\pm1$ that 
\begin{equation}\label{eqn:5.1a1d1}
\langle TY,Z \rangle=0,\  \text{for any $Y,Z\in \tilde V_\lambda$ and $Y\perp X_1$.}
\end{equation}
Similarly, for any vector fields $Y,Z\in \tilde{V}_\mu$ and  $Y\perp Y_1$, by using Codazzi equation \eqref{eqn:2.3}, 
we have 
\begin{equation}\label{eqn:5.1a1d2}
\langle TY,Z \rangle=0,\ \text{for any $Y,Z\in \tilde{V}_\mu$ and $Y\perp Y_1$.}
\end{equation}
	
Let $p_1=\langle TX_1,X_1 \rangle$, $p_2=\langle TX_1,Y_1 \rangle$, 
$p_3=\langle TY_1,Y_1 \rangle$. Then we calculate that 
\begin{equation*}
\begin{aligned}
\langle TX_1,TY_1 \rangle&=\langle PX_1,PY_1 \rangle-\langle PX_1,N \rangle\langle PY_1,N \rangle=-\langle X_1,V \rangle\langle Y_1,V \rangle=-(1-C^2)\cos(\theta)\sin(\theta).
\end{aligned}
\end{equation*}
On the other hand, by \eqref{eqn:5.1a1d1} and \eqref{eqn:5.1a1d2}, we have 
$\langle TX_1,TY_1 \rangle=p_1p_2+p_2p_3=p_2(p_1+p_3).$
Since $C\neq\pm1$ and $\cos(\theta)\sin(\theta)\neq0$, 
we have  $p_2\neq0$ and $p_1+p_3\neq0$. 
	
Now, for any $Y\in \tilde V_\mu$ and $Y\perp Y_1$, since $Y,PY\in V^\perp$, 
we have 
\begin{equation*}
\begin{aligned}
\langle TX_1,TY \rangle&=\langle TX_1,PY \rangle=\langle PX_1,PY \rangle=0.
\end{aligned}
\end{equation*}
By \eqref{eqn:5.1a1d1} and \eqref{eqn:5.1a1d2}, we also have $\langle TX_1,TY \rangle=\langle TX_1,X_1 \rangle\langle X_1,TY \rangle=p_1\langle X_1,TY \rangle$ for any $Y\in \tilde{V}_\mu$ and $Y\perp Y_1$. Then, it holds  
\begin{equation}\label{eqn:5.1a4}
p_1\langle TX_1,Y \rangle=0,\ \text{for any $Y\in \tilde{V}_\mu$ and $Y\perp Y_1$.} 
\end{equation}
Similarly, for any $Z\in \tilde V_\lambda$ and $Z\perp X_1$, we can also have 
\begin{equation}\label{eqn:5.1a5}
p_3\langle TY_1,Z \rangle=0,\ \text{for any $Z\in \tilde{V}_\lambda$ and $Z\perp X_1$.}
\end{equation}

If at a point $x\in \Omega$, it holds $p_1p_3\neq0$. Then by \eqref{eqn:5.1a4} and 
\eqref{eqn:5.1a5}, we have $\langle TX_1,Y \rangle=0$ for any $Y\in \tilde{V}_\mu$ and $Y\perp Y_1$, $\langle TY_1,Z \rangle=0$ for any $Z\in \tilde V_\lambda$ and $Z\perp X_1$. It follows from \eqref{eqn:5.1a1d1} and \eqref{eqn:5.1a1d2} that $T({\rm span}\{X_1,Y_1\})\subset {\rm span}\{X_1,Y_1\}$. 
	
If $p_1=0$ and $p_3\neq0$ at $x\in \Omega$, then by \eqref{eqn:5.1a5}, we have 
\begin{equation}\label{eqn:5.1f1}
\langle TY_1,Z \rangle=0,\  \text{for any $Z\in \tilde{V}_\lambda$ and $Z\perp X_1$}.
\end{equation}
Then, for any $Z\in \tilde{V}_\mu$ and $Z\perp Y_1$, by $Z,PZ\in V^\perp$, we calculate that
\begin{equation}\label{eqn:5.1a5h1}
\begin{aligned}
\langle TY_1,TZ \rangle&=\langle TY_1,PZ \rangle=\langle PY_1,PZ \rangle=0.
\end{aligned}
\end{equation}
By \eqref{eqn:5.1f1}, we get  
$$
\langle TY_1,TZ \rangle=\langle TY_1,X_1 \rangle\langle X_1,TZ \rangle=p_2\langle TX_1,Z \rangle.
$$
It follows from \eqref{eqn:5.1a5h1} and $p_2\neq0$ that 
$\langle TX_1,Z \rangle=0$ for any $Z\in \tilde{V}_\mu$ and $Z\perp Y_1$.
Thus, we still have $T({\rm span}\{X_1,Y_1\})\subset {\rm span}\{X_1,Y_1\}$. 
	
If at point $x\in \Omega$, it holds $p_1\neq0$ and $p_3=0$. Then by \eqref{eqn:5.1a4}, we have 
\begin{equation}\label{eqn:5.1f2}
\langle TX_1,Y \rangle=0,\ \text{for any $Y\in \tilde{V}_\mu$ and $Y\perp Y_1$}.
\end{equation}
Then, for any $Y\in \tilde{V}_\lambda$ and $Y\perp X_1$, by $Y,PY\in V^\perp$,  we have 
\begin{equation}\label{eqn:5.1a5g1}
\begin{aligned}
\langle TX_1,TY \rangle&=\langle TX_1,PY \rangle=\langle PX_1,PY \rangle=0.
\end{aligned}
\end{equation}
By \eqref{eqn:5.1f2}, we have 
$$
\langle TX_1,TY \rangle=\langle TX_1,Y_1 \rangle\langle Y_1,TY \rangle=p_2\langle TY_1,Y \rangle, 
$$
it follows from \eqref{eqn:5.1a5g1} and $p_2\neq0$ that $\langle TY_1,Y \rangle=0$ 
for any $Y\in \tilde{V}_\lambda$ and $Y\perp X_1$.
So, we still have $T({\rm span}\{X_1,Y_1\})\subset {\rm span}\{X_1,Y_1\}$. Thus, on $\Omega$, it always holds that $T({\rm span}\{X_1,Y_1\})\subset {\rm span}\{X_1,Y_1\}$. 
	
Since ${\rm Span}\{\bar{V},\bar{X}\}={\rm Span}\{X_1,Y_1\}$ and 
$T({\rm span}\{X_1,Y_1\})\subset {\rm span}\{X_1,Y_1\}$, we can conclude that $T({\rm Span}\{\bar{V},\bar{X}\})\subset {\rm Span}\{\bar{V},\bar{X}\}$. 
Then by $\bar{X},T\bar{X}\in V^\perp$, $T\bar{V}=-C\bar{V}$ and  $\|T\bar{X}\|=1$ we have $T\bar{X}=\pm \bar{X}$. Now, by \eqref{eqn:5.1a1d1}, \eqref{eqn:5.1a1d2}, $T\bar{V}=-C\bar{V}$ 
and $T\bar{X}=\pm \bar{X}$, we have ${\rm Tr}P=\pm 1$. By ${\rm Tr}P=m-n$, we get $m-n=\pm 1$. Without loss of generality, we can assume that $m\geq n$, then $T\bar{X}=\bar{X}$. 
Since $T({\rm span}\{X_1,Y_1\})\subset {\rm span}\{X_1,Y_1\}$, then by \eqref{eqn:5.1a1d1} and \eqref{eqn:5.1a1d2}, we have $P(\tilde{V}_\lambda\ominus\mathbb{R}X_1)=\tilde{V}_\mu\ominus\mathbb{R}Y_1$. 
	
In the following, we can take a local orthonormal frame field $\{E_i\}_{i=1}^{m+n-1}$, such that 
\begin{equation}\label{eqn:5.1a5g1g1s1}
\begin{aligned}
&E_1=\bar{V},\ E_2=\sin(\theta)X_1-\cos(\theta)Y_1,\ E_{2k+1}\in \tilde{V}_\lambda,\ E_{2k+2}=PE_{2k+1}\in \tilde{V}_\mu,\\ 
&AE_1=(\cos^2(\theta)\lambda+\sin(\theta)^2\mu)E_1+\sin(\theta)\cos(\theta)(\lambda-\mu)E_2,\\ 
&AE_2=\sin(\theta)\cos(\theta)(\lambda-\mu)E_1+(\sin^2(\theta)\lambda+\cos^2(\theta)\mu)E_2,\\ 
&PE_1=-CE_1+\sqrt{1-C^2}N, \ PE_2=E_2. 
\end{aligned}
\end{equation}
Assume that $\nabla_{E_i}{E_j}=\Gamma_{i,j}^kE_k$, $1\leq i,j,k\leq m+n-1$. 
	
Now, by applying the parallelism of the product structure $P$, 
i.e. $\bar{\nabla}P=0$, we can obtain some information about the connection coefficients with respect to $\{E_i\}_{i=1}^{m+n-1}$. Specifically, we calculate that 
\begin{equation}\label{eqn:5.1a6}
\begin{aligned}
\langle (\bar{\nabla}_{E_{2k+1}}P)E_1,E_{2k+1}\rangle
&=\langle \bar{\nabla}_{E_{2k+1}}(-CE_1+\sqrt{1-C^2}N), E_{2k+1}\rangle-\langle \bar{\nabla}_{E_{2k+1}} E_1,P E_{2k+1}\rangle\\
&=-C\Gamma_{2k+1,1}^{2k+1}-\sqrt{1-C^2}\lambda-\Gamma_{2k+1,1}^{2k+2}, 
\end{aligned}
\end{equation}
and 
\begin{equation}\label{eqn:5.1a7}
\begin{aligned}
\langle (\bar{\nabla}_{E_{2k+1}}P)E_1,E_{2k+2}\rangle
&=\langle \bar{\nabla}_{E_{2k+1}}(-CE_1+\sqrt{1-C^2}N), E_{2k+2}\rangle-\langle \bar{\nabla}_{E_{2k+1}} E_1,P E_{2k+2}\rangle\\
&=-C\Gamma_{2k+1,1}^{2k+2}-\Gamma_{2k+1,1}^{2k+1}. 
\end{aligned}
\end{equation}
It follows from \eqref{eqn:5.1a6}, \eqref{eqn:5.1a7} and $\bar{\nabla}P=0$ that 
\begin{equation}\label{eqn:5.1a8}
\Gamma_{2k+1,1}^{2k+1}=\frac{C\lambda}{\sqrt{1-C^2}},\ 
\Gamma_{2k+1,1}^{2k+2}=\frac{-\lambda}{\sqrt{1-C^2}}. 
\end{equation}
We calculate that 
\begin{equation}\label{eqn:5.1a9}
\begin{aligned}
\langle (\bar{\nabla}_{E_{2k+2}}P)E_1,E_{2k+1}\rangle
&=\langle \bar{\nabla}_{E_{2k+2}}(-CE_1+\sqrt{1-C^2}N), E_{2k+1}\rangle-\langle \bar{\nabla}_{E_{2k+2}} E_1,P E_{2k+1}\rangle\\
&=-C\Gamma_{2k+2,1}^{2k+1}-\Gamma_{2k+2,1}^{2k+2}, 
\end{aligned}
\end{equation}
and 
\begin{equation}\label{eqn:5.1a10x1}
\begin{aligned}
\langle (\bar{\nabla}_{E_{2k+2}}P)E_1,E_{2k+2}\rangle
&=\langle \bar{\nabla}_{E_{2k+2}}(-CE_1+\sqrt{1-C^2}N), E_{2k+2}\rangle-\langle \bar{\nabla}_{E_{2k+2}} E_1,P E_{2k+2}\rangle\\
&=-C\Gamma_{2k+2,1}^{2k+2}-\sqrt{1-C^2}\mu-\Gamma_{2k+2,1}^{2k+1}. 
\end{aligned}
\end{equation}
It follows from \eqref{eqn:5.1a9}, \eqref{eqn:5.1a10x1} and $\bar{\nabla}P=0$ that 
\begin{equation}\label{eqn:5.1a11}
\Gamma_{2k+2,1}^{2k+1}=\frac{-\mu}{\sqrt{1-C^2}},\ 
\Gamma_{2k+2,1}^{2k+2}=\frac{C\mu}{\sqrt{1-C^2}}. 
\end{equation}
Similarly, by calculating 
$$
\langle (\bar{\nabla}_{E_1}P)E_{2k+1},E_{2k+1}\rangle=0,\ 
\langle (\bar{\nabla}_{E_{2k+1}}P)E_2,E_{2k+1}\rangle=0,\   
\langle (\bar{\nabla}_{E_{2k+2}}P)E_2,E_{2k+1}\rangle=0,
$$
we can obtain that 
\begin{equation}\label{eqn:5.1a12}
\Gamma_{1,2k+1}^{2k+2}=0,\ \ \Gamma_{2k+1,2}^{2k+2}=\Gamma_{2k+1,2}^{2k+1},\ \ 
\Gamma_{2k+2,2}^{2k+2}=\Gamma_{2k+2,2}^{2k+1}. 
\end{equation}
	
In the following, we take $(X,Y,Z)=(E_1,E_{2k+1},E_{2k+1})$ into Codazzi equation \eqref{eqn:2.3}, then we get 
\begin{equation}\label{eqn:5.1a13}
\begin{aligned}
\langle& (\nabla_{E_{1}}A)E_{2k+1},E_{2k+1}\rangle
-\langle (\nabla_{E_{2k+1}}A)E_{1},E_{2k+1}\rangle\\
&=-\frac{1}{2}(\langle E_1,V \rangle\langle TE_{2k+1},E_{2k+1} \rangle-\langle E_{2k+1},V \rangle\langle TE_{1},E_{2k+1} \rangle)=0. 
\end{aligned}
\end{equation}
On the other hand, by definition of $\nabla A$, we have 
\begin{equation}\label{eqn:5.1a14}
\begin{aligned}
\langle& (\nabla_{E_{1}}A)E_{2k+1},E_{2k+1}\rangle
-\langle (\nabla_{E_{2k+1}}A)E_{1},E_{2k+1}\rangle\\
&=-\langle {\nabla}_{E_{2k+1}}\Big((\cos^2(\theta)\lambda+\sin(\theta)^2\mu)E_1+\sin(\theta)\cos(\theta)(\lambda-\mu)E_2\Big),E_{2k+1}\rangle+\lambda\langle \nabla_{E_{2k+1}} E_1, E_{2k+1}\rangle\\
&=(\lambda-\mu)\sin(\theta)(\sin(\theta)\Gamma_{2k+1,1}^{2k+1}-\cos(\theta)\Gamma_{2k+1,2}^{2k+1}). 
\end{aligned}
\end{equation}
It follows from \eqref{eqn:5.1a13} and \eqref{eqn:5.1a14} that  
\begin{equation}\label{eqn:5.1a15}
\sin(\theta)\Gamma_{2k+1,1}^{2k+1}-\cos(\theta)\Gamma_{2k+1,2}^{2k+1}=0.
\end{equation}
	
Now, by taking $(X,Y,Z)=(E_1,E_{2k+1},E_{2k+2})$ in Codazzi equation \eqref{eqn:2.3}, we have 
\begin{equation}\label{eqn:5.1a16}
\begin{aligned}
\langle& (\nabla_{E_{1}}A)E_{2k+1},E_{2k+2}\rangle
-\langle (\nabla_{E_{2k+1}}A)E_{1},E_{2k+2}\rangle\\
&=-\frac{1}{2}(\langle E_1,V \rangle\langle TE_{2k+1},E_{2k+2} \rangle-\langle E_{2k+1},V \rangle\langle TE_{1},E_{2k+2} \rangle)=-\frac{1}{2}\sqrt{1-C^2}. 
\end{aligned}
\end{equation}
On the other hand, by definition of $\nabla A$, we have 
\begin{equation}\label{eqn:5.1a17}
\begin{aligned}
\langle &(\nabla_{E_{1}}A)E_{2k+1},E_{2k+2}\rangle
-\langle (\nabla_{E_{2k+1}}A)E_{1},E_{2k+2}\rangle\\
&=(\lambda-\mu)\Gamma_{1,2k+1}^{2k+2}-\langle {\nabla}_{E_{2k+1}}((\cos^2(\theta)\lambda+\sin(\theta)^2\mu)E_1+\sin(\theta)\cos(\theta)(\lambda-\mu)E_2),E_{2k+2}\rangle\\
&\quad\quad  +\mu\langle \nabla_{E_{2k+1}} E_1, E_{2k+2}\rangle\\
&=(\lambda-\mu)\Gamma_{1,2k+1}^{2k+2}-(\cos^2(\theta)\lambda+\sin(\theta)^2\mu)\Gamma_{2k+1,1}^{2k+2}-
\sin(\theta)\cos(\theta)(\lambda-\mu)\Gamma_{2k+1,2}^{2k+2}+\mu 
\Gamma_{2k+1,1}^{2k+2}
\end{aligned}
\end{equation}
It follows from \eqref{eqn:5.1a16} and \eqref{eqn:5.1a17} that
\begin{equation}\label{eqn:5.1a18}
(\lambda-\mu)\Gamma_{1,2k+1}^{2k+2}-(\cos^2(\theta)\lambda+\sin(\theta)^2\mu)
\Gamma_{2k+1,1}^{2k+2}-\sin(\theta)\cos(\theta)(\lambda-\mu)\Gamma_{2k+1,2}^{2k+2}+\mu 
\Gamma_{2k+1,1}^{2k+2}=-\frac{1}{2}\sqrt{1-C^2}.
\end{equation}
	
Similarly, by taking $(X,Y,Z)=(E_1,E_{2k+2},E_{2k+1})$ and $(X,Y,Z)=(E_1,E_{2k+2},E_{2k+2})$ into Codazzi equation \eqref{eqn:2.3}, we have 
\begin{equation}\label{eqn:5.1a19}
(\lambda-\mu)\Gamma_{1,2k+1}^{2k+2}-(\cos^2(\theta)\lambda+\sin(\theta)^2\mu)\Gamma_{2k+2,1}^{2k+1}-\sin(\theta)\cos(\theta)(\lambda-\mu)\Gamma_{2k+2,2}^{2k+1}+\lambda 
\Gamma_{2k+2,1}^{2k+1}=-\frac{1}{2}\sqrt{1-C^2}.
\end{equation}
and 
\begin{equation}\label{eqn:5.1a20}
\cos(\theta)\Gamma_{2k+2,1}^{2k+2}+\sin(\theta)\Gamma_{2k+2,2}^{2k+2}=0.
\end{equation}
	
By using \eqref{eqn:5.1a8}, \eqref{eqn:5.1a11}, \eqref{eqn:5.1a15} and \eqref{eqn:5.1a20}, we have $\Gamma_{2k+1,2}^{2k+1}=\frac{C\lambda\tan(\theta)}{\sqrt{1-C^2}}$ and $\Gamma_{2k+2,2}^{2k+1}=\frac{-C\mu\cot(\theta)}{\sqrt{1-C^2}}$. Then, by 
\eqref{eqn:5.1a8}, \eqref{eqn:5.1a11} and \eqref{eqn:5.1a12}, 
we calculate \eqref{eqn:5.1a18}$\times\mu$--\eqref{eqn:5.1a19}$\times\lambda$, and obtain $(C-1)(\lambda-\mu)(1+C-2\lambda\mu)=0$. It follows from $C\neq\pm1$ and 
$\lambda\neq\mu$ that $C=-1+2\lambda\mu$. It implies that $C$ is constant on $M_{22}$ and $AV=0$, which contradicts the definition of $M_{22}$. Thus $M_{22}$ is empty, and  $M_2=M_{21}$. 
	
In the following, we assume $AV=\lambda V$ on $M_2$. If ${\rm dim}\tilde{V}_\lambda=1$, then it holds $V^\perp=\tilde{V}_\mu$, which implies that $M$ satisfies $AT=TA$. For any $X,Y\in \tilde{V}_\mu$, by Codazzi equation \eqref{eqn:2.3}, we have 
\begin{equation}\label{eqn:5.1azz1}
\begin{aligned}
\langle (\nabla_{X}A)V,Y\rangle-\langle (\nabla_{V}A)X,Y\rangle
=(\lambda-\mu)\langle \nabla_{X} V,Y\rangle
=\frac{1}{2}(1-C^2)\langle TX,Y\rangle.
\end{aligned}
\end{equation}
By \eqref{eqn:2.4}, we have $\nabla_{X} V=CAX-TAX=\mu(CX-TX)$. It follows from \eqref{eqn:5.1azz1} that 
\begin{equation}\label{eqn:5.1azz2}
\begin{aligned}
(\lambda-\mu)\mu[C\langle X,Y\rangle-\langle TX,Y\rangle]=\frac{1}{2}(1-C^2)\langle TX,Y\rangle.
\end{aligned}
\end{equation}

Let $\mathcal D_+=\{X\in V^\perp|\ TX=X\}$ and $\mathcal D_{-}=\{X\in V^\perp|\ TX=-X\}$. 
Then we have ${\rm dim}\mathcal D_+=m-1\geq2$ and ${\rm dim}\mathcal D_{-}=n-1\geq1$. 
Now, for any $\tilde{X}\in V^\perp$ such that $P\tilde{X}=\tilde{X}$, we take 
$Y=X=\tilde{X}$ into \eqref{eqn:5.1azz2}, then 
we have $(\lambda-\mu)\mu=-\frac{1}{2}(1+C)$. 
On the other hand, for any $\tilde{X}\in V^\perp$ such that $P\tilde{X}=-\tilde{X}$, we take 
$Y=X=\tilde{X}$ into \eqref{eqn:5.1azz2}, then 
we have $(\lambda-\mu)\mu=-\frac{1}{2}(1-C)$. Thus, it holds $C=0$ on $M_2$. 
	
Now, we assume that ${\rm dim}\tilde{V}_\lambda\geq2$. For any unit vector fields $\tilde{X},\tilde{Y}\in (\tilde{V}_\lambda\ominus V)$, by Codazzi equation \eqref{eqn:2.3}, we have 
\begin{equation}\label{eqn:5.1a21}
\begin{aligned}
\langle& (\nabla_{\bar{V}}A)\tilde{X},\tilde{Y}\rangle
-\langle (\nabla_{\tilde{X}}A)\bar{V},\tilde{Y}\rangle\\
&=-\frac{1}{2}(\langle \bar{V},V \rangle\langle T\tilde{X},\tilde{Y} \rangle-\langle \tilde{X},V \rangle\langle T\bar{V},\tilde{Y} \rangle)=-\frac{1}{2}\sqrt{1-C^2}\langle T\tilde{X},\tilde{Y} \rangle. 
\end{aligned}
\end{equation}
On the other hand, by \eqref{eqn:5.1a1g1}, 
we have $\langle (\nabla_{\bar{V}}A)\tilde{X},\tilde{Y}\rangle
-\langle (\nabla_{\tilde{X}}A)\bar{V},\tilde{Y}\rangle=0$. 
It follows that $\langle T\tilde{X},\tilde{Y}\rangle=0$ for 
any $\tilde{X},\tilde{Y}\in (\tilde V_\lambda\ominus V)$. Thus, $T\tilde{X}\in \tilde{V}_\mu$ for any $\tilde{X}\in (\tilde{V}_\lambda\ominus V)$.  
	
Now, for any unit vector field $\tilde{X}\in (\tilde V_\lambda\ominus V)$, by Codazzi equation \eqref{eqn:2.3} and the definition of $\nabla A$, we have 
\begin{equation}\label{eqn:5.1a22}
\begin{aligned}
\langle (\nabla_{T\tilde{X}}A)\bar{V},\tilde{X}\rangle
-\langle (\nabla_{\bar{V}}A)T\tilde{X},\tilde{X}\rangle
=-(\mu-\lambda)\langle \nabla_{\bar{V}} T\tilde{X},\tilde{X}\rangle
=\frac{1}{2}\sqrt{1-C^2}. 
\end{aligned}
\end{equation} 
On the other hand, we calculate that 
\begin{equation}\label{eqn:5.1a23}
\begin{aligned}
0&=\langle (\bar{\nabla}_{\bar{V}}P)\tilde{X},\tilde{X}\rangle
=\langle \bar{\nabla}_{\bar{V}}T\tilde{X},\tilde{X}\rangle-\langle \bar{\nabla}_{\bar{V}}\tilde{X},T\tilde{X}\rangle
=2\langle \nabla_{\bar{V}}T\tilde{X},\tilde{X}\rangle, 
\end{aligned}
\end{equation}
where we have used that $\tilde{X}\in \tilde{V}_\lambda$ and $T\tilde{X}\in \tilde{V}_\mu$. 
Combining \eqref{eqn:5.1a22} and \eqref{eqn:5.1a23}, we have 
$\frac{1}{2}\sqrt{1-C^2}=0$, which contradicts the assumption of $M_2$. Thus, it always holds 
that $AT=TA$ and $C=0$ on $M_2$. 
	
According to the continuity of function $C$ and the connectivity of hypersurface $M$, it holds that either $C=0$ on $M$ or $C=\pm1$ on $M$. When $C=\pm1$, by Lemmas \ref{lemma:3.5}--\ref{lemma:3.2aa},  $M$ is either an open part of $\Sigma\times \mathbb{H}^n$, where $\Sigma$ is a totally umbilical hypersurface of $\mathbb{H}^m$ (see Example \ref{exam:1}) or an open part of $\mathbb{H}^m\times \tilde{\Sigma}$, where $\tilde{\Sigma}$ is a totally umbilical hypersurface of $\mathbb{H}^n$ (see Example \ref{exam:2}). When $C=0$, then it holds $AT=TA$ on $M$. According to Proposition \ref{prop:ATTA} and the fact that $M$ has two distinct constant principal curvatures, we know that $M$ is an open part of 
$M_{1,-1}^0$.

\section{Proof of Theorem \ref{thm:1.1}}\label{sect:6}
	
\begin{proposition}\label{prop:Cneq0} 
Let $M$ be a connected oriented hypersurface of $\mathbb{H}^m\times \mathbb{H}^n$ 
($m\geq3, n\geq2$) with constant principal curvatures and constant product angle function $C\notin\{0,\pm1\}$. Then $g=3$, and up to isometries of $\mathbb{H}^m\times \mathbb{H}^n$, one of the following two cases occurs: 
\begin{itemize}
\item[(1)] $M$ is an open part of $M_{1,-1}^{C}$ (see Example \ref{exam:3}); or
\item[(2)] $M$ is an open part of $M_{1,1}^{C}$ (see Example \ref{exam:4}). 
\end{itemize}
\end{proposition}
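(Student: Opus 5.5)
The strategy is to reduce Proposition \ref{prop:Cneq0} to Proposition \ref{prop:ATTA}. It suffices to show $AT=TA$ on $M$. Since $C\notin\{0,\pm1\}$, Corollary \ref{coro:2.2} gives $AV=0$, $\nabla_VV=0$ and $A(V^\perp)\subset V^\perp$. The identity $TV=-CV$ shows that $T$ and $A$ commute on $\mathrm{Span}\{V\}$, so the real task is to prove $T(V_\lambda)\subset V_\lambda$ for every $\lambda\in\sigma(A|_{V^\perp})$. Once this holds, Proposition \ref{prop:ATTA} shows that $M$ is an open part of $M^C_{1,-1}$ or $M^C_{1,1}$. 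The tables in Examples \ref{exam:3}--\ref{exam:4} then give three distinct values $0,\sqrt{(1+C)/2},\pm\sqrt{(1-C)/2}$, and these are distinct because $C\neq0$. Hence $g=3$.

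To get $T$-invariance of each eigenspace, I use Lemma \ref{lemma:4.1}(1). By Remark \ref{rem:denominater}, $C\neq0$ implies $\lambda^2-\frac12(1-C^2)\neq0$, so on $V_\lambda$ we have
\[
\langle TX,Y\rangle=c_\lambda\langle X,Y\rangle,\qquad c_\lambda=\frac{C\lambda^2}{\lambda^2-\frac12(1-C^2)},\qquad X,Y\in V_\lambda .
\]
Thus the compression of $T$ to $V_\lambda$ is $c_\lambda\,\mathrm{id}$. Because $T|_{V^\perp}$ is an orthogonal involution, $\|TX\|=\|X\|$, so $|c_\lambda|\le1$. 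Moreover, $|c_\lambda|=1$ holds exactly when $TX=\pm X$ for all $X\in V_\lambda$. For $\lambda=0$ we get $c_0=0$, which is Corollary \ref{coro:4.2}(1). The goal is therefore to show that $c_\lambda=\pm1$ for every $\lambda\neq0$, and that $V_0\cap V^\perp=\{0\}$, or more generally that $T$ preserves $V_0$.

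I would solve $c_\lambda=\pm1$ explicitly: $c_\lambda=1$ gives $\lambda^2=\frac{1+C}{2}$, and $c_\lambda=-1$ gives $\lambda^2=\frac{1-C}{2}$. These are exactly the values in the examples. To force them, I would use the Cartan formulas of Lemma \ref{lemma:car-seperate}, which are available since $C\neq0$ and $\lambda\neq0$. Write $\langle TX,X\rangle=c_\lambda$ and $\langle Te_i,e_i\rangle=c_{\mu_i}$, where $c_{\mu_i}$ is constant on each eigenspace. Then \eqref{eqn:car-seperate-1} becomes a purely algebraic identity in the distinct eigenvalues and their multiplicities. The plan is to take the eigenvalue $\lambda$ with the largest (respectively smallest) value of some auxiliary function, in the spirit of Cartan's classical argument, and show that the summand has a fixed sign unless $c_\lambda=\pm1$.

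I would combine this with \eqref{eqn:car-seperate-2}. If $c_\lambda\neq\pm1$, then $TX$ has a nonzero component outside $V_\lambda$, and that component lies in other eigenspaces $V_\mu$. Equation \eqref{eqn:car-seperate-2} then constrains the weights $\frac{\lambda\mu-\frac12(1-C^2)}{\lambda-\mu}$ to change sign. Lemma \ref{lemma:car-before}, applied to a pair $X\in V_\lambda$, $Y\in V_\mu$ with $\langle TX,Y\rangle\neq0$, supplies a further relation. I would also use the trace constraint $\mathrm{Tr}\,P=m-n$, together with the fact that the $\pm1$-eigenspaces of $T|_{V^\perp}$ have dimensions $m-1$ and $n-1$. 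The zero eigenspace $V_0$ needs separate handling. There $T V_0\perp V_0$, and I would show it must vanish by applying Lemma \ref{lemma:car-before} with $\lambda=0$: the left side of \eqref{eqn:car-before} must then be compatible with $-\frac12(1+c_\mu\cdot0-2\langle TX,Y\rangle^2)$.

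The main obstacle is this algebraic step: excluding eigenvalues with $|c_\lambda|<1$, i.e. eigenspaces that $T$ mixes. I expect to need a case analysis on the number of distinct eigenvalues, ordering them and exploiting sign patterns of the Cartan sums. The mixed pairs $(V_\lambda,V_\mu)$, with $T$ mapping part of one into the other, should pair up with compatible $c_\lambda=c_\mu$, and the Cartan identities should then force a contradiction.
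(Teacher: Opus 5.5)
\textbf{There is a genuine gap.} Your reduction is correct. It suffices to show that every $V_\lambda$ is $T$-invariant. Lemma \ref{lemma:4.1}(1) together with Remark \ref{rem:denominater} gives $\langle TX,Y\rangle=c_\lambda\langle X,Y\rangle$ on $V_\lambda$, and $|c_\lambda|=1$ exactly when $\lambda^2\in\{\frac{1+C}{2},\frac{1-C}{2}\}$. Once $AT=TA$ holds, Proposition \ref{prop:ATTA} and the tables of the examples give the two models and $g=3$.

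However, the step that carries the whole proof, excluding nonzero eigenvalues with $|c_\lambda|<1$, is only announced: an unspecified auxiliary function, a case analysis, and Cartan identities that ``should'' force a contradiction. Your guiding heuristic also points the wrong way. If $T$ sends part of $V_\lambda$ into $V_\mu$ and nowhere else, then $T^2=\mathrm{id}$ forces $c_\mu=-c_\lambda$, not $c_\mu=c_\lambda$. To see this, take unit $\tilde X\in V_\lambda$ and write $T\tilde X=a\tilde X+b\tilde Y$ with $\tilde Y\in V_\mu$ unit and $b\neq 0$; then $T\tilde Y=b\tilde X-a\tilde Y$.

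The missing argument uses only \eqref{eqn:car-seperate-2}. Let $E=\sigma(A|_{V^\perp})\setminus\{0,\pm\sqrt{(1+C)/2},\pm\sqrt{(1-C)/2}\}$ and $k=\frac12(1-C^2)$.
\begin{itemize}
\item If $e_i$ lies in a $T$-invariant eigenspace, then $\langle TX,e_i\rangle=\langle X,Te_i\rangle=0$. So only eigenvalues in $E\cup\{0\}$ contribute to the sum.
\item For $\lambda>0$ (with $\lambda^2\neq k$ since $C\neq0$), the weight $\frac{\lambda\mu-k}{\lambda-\mu}$ is positive iff $\mu$ lies strictly between $\lambda$ and $k/\lambda$. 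It vanishes iff $\mu=k/\lambda$ and is negative otherwise. That interval is $\{\mu>0: |\log(\mu/\sqrt k)|<|\log(\lambda/\sqrt k)|\}$, so these intervals are nested and never contain $0$.
\item Choose $\tilde\lambda\in E$ minimizing $|\log(\lambda/\sqrt k)|$ among positive elements of $E$, replacing $N$ by $-N$ if needed. Then every weight is $\le 0$, with equality only at $\tilde\eta=k/\tilde\lambda$.
\item Since $|c_{\tilde\lambda}|<1$, $TX$ has a nonzero component off $V_{\tilde\lambda}$. Hence $\tilde\eta\in\sigma(A|_{V^\perp})$ and $TV_{\tilde\lambda}\subset V_{\tilde\lambda}\oplus V_{\tilde\eta}$, which gives $c_{\tilde\eta}=-c_{\tilde\lambda}$ as above.
\item But substituting $2\tilde\lambda\tilde\eta=1-C^2$ into the formula for $c$ gives $c_{\tilde\lambda}+c_{\tilde\eta}=C\left(\frac{\tilde\lambda}{\tilde\lambda-\tilde\eta}+\frac{\tilde\eta}{\tilde\eta-\tilde\lambda}\right)=C\neq 0$. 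This contradiction shows $E=\emptyset$.
\end{itemize}

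After that, $V_0$ needs no Lemma \ref{lemma:car-before} computation. It is the orthogonal complement in $V^\perp$ of the $T$-invariant nonzero eigenspaces, hence $T$-invariant. Combined with $TV_0\perp V_0$ (Corollary \ref{coro:4.2}) and injectivity of $T|_{V^\perp}$, this gives $V_0=0$. Neither \eqref{eqn:car-seperate-1} nor the trace of $P$ is needed.
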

	
\begin{proof}
When $C\neq 0$, by Remark \ref{rem:denominater}, it holds $\lambda^2-\frac{1}{2}(1-C^2)\neq 0$ for any principal curvature $\lambda\in \sigma(A|_{V^\perp})$. Then   Lemma \ref{lemma:4.1} says
\begin{equation}\label{eqn:6.1}
\langle TX,Y\rangle=\frac{C\lambda^2}{\lambda^2-\frac{1}{2}(1-C^2)}\langle X,Y  \rangle, \ \ \forall \ X,Y\in V_\lambda.   
\end{equation}
		
If $\lambda \in \left\{ \pm \sqrt{\frac{1+ C}{2}}, \pm \sqrt{\frac{1 - C}{2}}\right\}$, then $\frac{C\lambda^2}{\lambda^2-\frac{1}{2}(1-C^2)} = \pm 1$. It follows from \eqref{eqn:6.1} that $TV_\lambda = V_\lambda$. 
		
Set $E := \sigma(A|_{V^\perp}) \backslash \left\{0, \pm \sqrt{\frac{1+ C}{2}}, \pm \sqrt{\frac{1 - C}{2}}\right\}$. 
If $E$ is not empty, we can take a special $\tilde{\lambda}\in E$ such that there does not exist any $\lambda\in E$ lying between $\tilde{\lambda}$ and $\frac{1 - C^2}{2\tilde{\lambda}}$. Without loss of generality, we further suppose that $\tilde{\lambda} > 0$. 
Substituting this $\tilde{\lambda}$ into Lemma \ref{lemma:car-seperate} gives
\begin{equation}\label{eqn:6.1f1}
\sum_{i=1, \mu_i\neq\tilde\lambda}^{m+n-2}\frac{\tilde{\lambda}\mu_i-\frac{1}{2}(1-C^2)}{\tilde{\lambda}-\mu_i}\langle TX,e_i\rangle^2=0.
\end{equation}
If $\mu_i \in \left\{\pm \sqrt{\frac{1+ C}{2}}, \pm \sqrt{\frac{1 - C}{2}}\right\}$, then by $TV_{\mu_i} = V_{\mu_i}$, we have $\langle TX,e_i\rangle= 0$ for any $X\in V_{\tilde{\lambda}}$ and $e_i\in V_{\mu_i}$. So \eqref{eqn:6.1f1} reduces to 
\begin{equation}\label{eqn:6.1f2}
\sum_{i=1, \mu_i\neq\tilde{\lambda},\mu_i\in E\cup\{0\}}^{m+n-2}\frac{\tilde{\lambda}\mu_i-\frac{1}{2}(1-C^2)}{\tilde{\lambda}-\mu_i}\langle TX,e_i\rangle^2=0.
\end{equation}
		
For any $\mu_i\in E\cup\{0\}$, $\mu_i\neq \tilde{\lambda}$, the coefficient
$$
\frac{\tilde{\lambda}\mu_i-\frac{1}{2}(1-C^2)}{\tilde{\lambda}-\mu_i} = \tilde{\lambda}\cdot \frac{\mu_i-\frac{1-C^2}{2\tilde{\lambda}}}{\tilde{\lambda}-\mu_i}
$$ 
is negative unless $\mu_i = \frac{1 - C^2}{2\tilde{\lambda}}$. Here $\frac{1 - C^2}{2\tilde{\lambda}} \neq \tilde{\lambda}$ since $\tilde{\lambda}^2-\frac{1}{2}(1-C^2)\neq 0$. By $\tilde{\lambda}\in E$, then for any $X\in V_{\tilde{\lambda}}$, $TX$ has nontrivial projection on $(\oplus_{\mu_i\in E\cup\{0\}}V_{\mu_i})\ominus V_{\tilde{\lambda}}$. If $\frac{1-C^2}{2\tilde{\lambda}} \notin E$, then the left hand side of \eqref{eqn:6.1f2} is negative, which is 
a contradiction. Thus, $\frac{1-C^2}{2\tilde{\lambda}} \in E$. Again by \eqref{eqn:6.1f2}, we must have $TV_{\tilde{\lambda}} \subset (V_{\tilde{\lambda}}\oplus V_{\frac{1-C^2}{2\tilde{\lambda}}})$. Let $\tilde{\eta}=\frac{1-C^2}{2\tilde{\lambda}}$. 
		
On the other hand, by substituting $\tilde{\eta}$ into Lemma \ref{lemma:car-seperate}, we can also obtain $TV_{\tilde{\eta}}\subset V_{\tilde\lambda}\oplus V_{\tilde{\eta}}$. It follows that $T\br{V_{\tilde{\lambda}}\oplus V_{\tilde{\eta}}} = V_{\tilde{\lambda}}\oplus V_{\tilde{\eta}}$. 
		
For any unit $\tilde X\in V_{\tilde{\lambda}}$, by \eqref{eqn:6.1} and $T\br{V_{\tilde{\lambda}}\oplus V_{\tilde{\eta}}} = V_{\tilde{\lambda}}\oplus V_{\tilde{\eta}}$, we can assume 
\begin{equation}\label{eqn:6.1f3}
T\tilde X = a \tilde X + b \tilde Y,\quad a = \frac{C\tilde{\lambda}^2}{\tilde{\lambda}^2-\frac{1}{2}(1-C^2)},\ a^2 + b^2 = 1, \ b\neq 0,
\end{equation}
where $\tilde Y\in V_{\tilde{\eta}}$ is a unit vector field. Then by \eqref{eqn:6.1f3}, we get 
$$
bT\tilde Y = \tilde X - aT\tilde X = (1-a^2)\tilde X - ab\tilde Y = b^2 \tilde X - ab \tilde Y,
$$
i.e.,
$$
T\tilde Y = -a \tilde Y + b \tilde X.
$$
For unit vector field $\tilde Y\in V_{\tilde{\eta}}$, by \eqref{eqn:6.1}, we have $-a=\lang{T\tilde Y, \tilde Y} = \frac{C\tilde{\eta}^2}{\tilde{\eta}^2-\frac{1}{2}(1-C^2)}$. It follows 
\begin{equation}\label{eqn:6.1f4}
\frac{C\tilde{\lambda}^2}{\tilde{\lambda}^2-\frac{1}{2}(1-C^2)} + \frac{C\tilde{\eta}^2}{\tilde{\eta}^2-\frac{1}{2}(1-C^2)} = 0.
\end{equation}
Substituting $2\tilde{\lambda}\tilde{\eta}= 1-C^2$ into \eqref{eqn:6.1f4}, we obtain
$$
0 = \frac{C\tilde{\lambda}^2}{\tilde{\lambda}^2-\tilde{\lambda}\tilde{\eta}} + \frac{C\tilde{\eta}^2}{\tilde{\eta}^2-\tilde{\lambda}\tilde{\eta}} =C,
$$
which contradicts $C\neq0$. Thus $E$ is empty, and $\sigma(A|_{V^\perp})\subset \left\{0, \pm \sqrt{\frac{1+ C}{2}}, \pm \sqrt{\frac{1 - C}{2}}\right\}$. In the following, we divide the discussion into three cases:
		
{\bf Case-1}: $\sigma(A|_{V^\perp}) = \{0\}$. 
		
{\bf Case-2}: $0\in \sigma(A|_{V^\perp})$ and $\sigma(A|_{V^\perp})\backslash\{0\}$ is not empty.  
		
{\bf Case-3}: $\sigma(A|_{V^\perp})\subset \left\{\pm \sqrt{\frac{1+ C}{2}}, \pm \sqrt{\frac{1 - C}{2}}\right\}$. 

In {\bf Case-1}, $M$ is a totally geodesic hypersurface. According to the classification result in \cite{Toj13}, $M$ has constant product angle function $C\equiv \pm 1$, which contradicts the assumption. 

\vskip 2mm

In {\bf Case-2}, by Corollary \ref{coro:4.2},  it holds $TV_0 \perp V_0$, so $TV_0\subset (V^{\perp}\ominus V_0)$. On the other hand, for any $\lambda\in \left\{\pm \sqrt{\frac{1+ C}{2}}, \pm \sqrt{\frac{1 - C}{2}}\right\}$, we have $TV_\lambda=V_\lambda$, hence 
$T(V^{\perp}\ominus V_0)=(V^{\perp}\ominus V_0)$. We get a contradiction. 

\vskip 2mm

In {\bf Case-3}, for any $\lambda \in \sigma(A|_{V^\perp})$, we have $TV_\lambda = V_\lambda$. It means that $AT = TA$ holds on $M$. Then by Proposition \ref{prop:ATTA}, 
$M$ is either an open part of $M_{1,-1}^{C}$ for some $C\in (-1,0)\cup(0,1)$, or 
an open part of $M_{1,1}^{C}$ for some $C\in (-1,0)\cup(0,1)$. 
\end{proof}
	
Next, we consider the case when $C = 0$.
	
\begin{proposition}
Let $M$ be a connected oriented hypersurface of $\mathbb{H}^m\times \mathbb{H}^n$ ($m\geq3, n\geq2$) with constant principal curvatures and product angle function $C\equiv0$. Then $g\leq 3$.
\end{proposition}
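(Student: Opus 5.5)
The plan is to bound the number of distinct eigenvalues of $A|_{V^\perp}$. Since $AV=0$ by Corollary \ref{coro:2.2}, $g$ equals the number of distinct elements of $\sigma(A|_{V^\perp})\cup\{0\}$, so it suffices to show that $\sigma(A|_{V^\perp})\cup\{0\}$ has at most three elements.

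Set $E:=\sigma(A|_{V^\perp})\setminus\{0,\pm\frac{1}{\sqrt2}\}$. For $\lambda\in E$, Corollary \ref{coro:4.2} gives $TV_\lambda\perp V_\lambda$. Lemma \ref{lemma:car-seperate} then gives \eqref{eqn:car-0-seperate-1} and \eqref{eqn:car-0-seperate-2}.

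\textbf{Step 1: the structure of $E$.} I would imitate the extremal argument in the proof of Proposition \ref{prop:Cneq0}. Choose $\tilde\lambda\in E$, with $\tilde\lambda>0$ after changing $N$ to $-N$ if necessary, such that no element of $E$ lies strictly between $\tilde\lambda$ and $\tilde\eta:=\frac{1}{2\tilde\lambda}$. The coefficient in \eqref{eqn:car-0-seperate-2} is
$$
\frac{\tilde\lambda\mu-\frac12}{\tilde\lambda-\mu}=\tilde\lambda\,\frac{\mu-\tilde\eta}{\tilde\lambda-\mu}.
$$
For $\mu\in E\cup\{0,-\frac1{\sqrt2}\}$, $\mu\neq\tilde\lambda$, this coefficient is negative unless $\mu=\tilde\eta$. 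The one value with a positive coefficient is $\mu=\frac1{\sqrt2}$, because $\frac1{\sqrt2}=\sqrt{\tilde\lambda\tilde\eta}$ always lies strictly between $\tilde\lambda$ and $\tilde\eta$.

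So the first task is to control the projection of $TV_{\tilde\lambda}$ onto $V_{\pm 1/\sqrt2}$. I would use Lemma \ref{lemma:4.1}(2) with $\mu=\pm\frac1{\sqrt2}$, together with the identity $\langle\nabla_V X,Y\rangle=-\langle X,\nabla_V Y\rangle$, to relate these components for $\lambda$ and $\mu$ swapped. Combining this with the Cartan identity \eqref{eqn:car-0-before} for the pair $(\tilde\lambda,\frac1{\sqrt2})$ should force those components to vanish, or at least to satisfy a relation that keeps the sum in \eqref{eqn:car-0-seperate-2} negative.

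Granting this, \eqref{eqn:car-0-seperate-2} forces $\tilde\eta\in E$ and $TV_{\tilde\lambda}\subset V_{\tilde\eta}$. Symmetrically $TV_{\tilde\eta}\subset V_{\tilde\lambda}$, so $T$ swaps $V_{\tilde\lambda}$ and $V_{\tilde\eta}$. Next I would plug this pair into \eqref{eqn:car-0-seperate-1}. The terms coming from $\tilde\eta$ vanish, so the remaining terms (from $0$, $\pm\frac1{\sqrt2}$ and the other elements of $E$) must sum to zero. Iterating the extremal choice over the remaining elements of $E$, the sign analysis should show $E=\{\tilde\lambda,\tilde\eta\}$ and that neither $0$ nor $\pm\frac1{\sqrt2}$ lies in $\sigma(A|_{V^\perp})$. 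That case gives $g=3$.

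\textbf{Step 2: the case $E=\emptyset$.} Here $\sigma(A|_{V^\perp})\subset\{0,\pm\frac1{\sqrt2}\}$, and the only configuration to exclude is $\{0,\frac1{\sqrt2},-\frac1{\sqrt2}\}$ all occurring. For unit $X\in V_0$ we have $\langle TX,X\rangle=0$, and \eqref{eqn:car-0-general} with $\lambda=0$ reads
$$
\sum_{\mu_i\ne0}\frac{1}{2\mu_i}\bigl(1-2\langle TX,e_i\rangle^2\bigr)=0.
$$
I would combine this with the corresponding identities at $\lambda=\pm\frac1{\sqrt2}$. At $\lambda\mu=\frac12$ the coefficient vanishes, and at $\lambda\mu=-\frac12$ the coefficient equals $\mp\frac{1}{\sqrt2}$. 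I would also use the pairwise identities \eqref{eqn:car-0-before}, and the trace and involution constraints: $T|_{V^\perp}$ is an involution with trace $m-n$ and $\mathrm{Tr}\,T|_{V_0}=0$. Together these should give either a dimension contradiction or a contradiction from a sum of squares with fixed sign.

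\textbf{Main obstacle.} The delicate step is the sign analysis around the eigenvalue $\frac1{\sqrt2}$. The proof of Proposition \ref{prop:Cneq0} avoided this because there $TV_\lambda=V_\lambda$ for the exceptional $\lambda$. When $C=0$, Lemma \ref{lemma:4.1}(1) gives no information on $V_{\pm1/\sqrt2}$, since both sides of the identity vanish. I would first try to extract that information from \eqref{eqn:car-0-before} applied to pairs involving $\pm\frac1{\sqrt2}$.
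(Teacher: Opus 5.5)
Your reduction to counting $\sigma(A|_{V^\perp})\cup\{0\}$ is correct. Step 2 is in fact vacuous: if $E=\emptyset$ then $\sigma(A|_{V^\perp})\cup\{0\}\subset\{0,\pm\frac{1}{\sqrt2}\}$, so $g\le 3$ automatically and nothing needs to be excluded. Your extremal choice of $\tilde\lambda$ combined with \eqref{eqn:car-0-seperate-1}, which needs no information about $T$, also works in one case: when $\frac{1}{\sqrt2}$, taken with the sign of $\tilde\lambda$, is not a principal curvature. This is essentially the paper's Case-i.

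The genuine gap is the remaining case, which is exactly the obstacle you flag and then grant. Suppose $\frac{1}{\sqrt2}\in\sigma(A|_{V^\perp})$ with $\tilde\lambda>0$. Then both \eqref{eqn:car-0-seperate-1} and \eqref{eqn:car-0-seperate-2} at $\tilde\lambda$ contain the positive coefficient $\frac{1}{\sqrt2}$, which can balance all the negative ones. So neither identity forces $\sigma(A|_{V^\perp})=\{\tilde\lambda,\tilde\eta\}$, and your \emph{iterating} step inherits the same problem. The tools you propose do not remove this term:
\begin{itemize}
\item Lemma \ref{lemma:4.1}(2) with $\lambda$ and $\mu$ swapped returns the identical equation, because $\langle TY,X\rangle=\langle TX,Y\rangle$ and $\langle\nabla_VY,X\rangle=-\langle Y,\nabla_VX\rangle$. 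It only expresses $\langle\nabla_VX,Y\rangle$ in terms of $\langle TX,Y\rangle$.
\item \eqref{eqn:car-0-before} for the pair $(\tilde\lambda,\frac{1}{\sqrt2})$ has the uncontrolled terms $\langle(\nabla_{e_i}A)X,Y\rangle^2$ on the left. At best it gives a one-sided bound on $\langle TX,Y\rangle^2$, whose direction depends on where $\tilde\lambda$ sits relative to $\frac{1}{\sqrt2}$, not the vanishing you need.
\end{itemize}

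The missing idea is to apply \eqref{eqn:car-0-general} at a unit vector $X\in V_{1/\sqrt2}$ rather than at $\tilde\lambda$. There the coefficient $\frac{\frac{1}{\sqrt2}\mu_i-\frac12}{\frac{1}{\sqrt2}-\mu_i}$ equals $-\frac{1}{\sqrt2}$ for \emph{every} $\mu_i\neq\frac{1}{\sqrt2}$. Also $\langle Te_i,e_i\rangle=0$ whenever $\mu_i^2\neq\frac12$, by Corollary \ref{coro:4.2}.

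If $-\frac{1}{\sqrt2}\notin\sigma(A|_{V^\perp})$, this gives $m+n-2-\dim V_{1/\sqrt2}=2\sum_{\mu_i\neq 1/\sqrt2}\langle TX,e_i\rangle^2\le 2$. So the complement of $V_{1/\sqrt2}$ in $V^\perp$ has dimension at most $2$. The only remaining configuration, with three eigenvalues on $V^\perp$, is excluded by writing $T|_{V^\perp}$ as an explicit symmetric involution in a principal frame. One then uses \eqref{eqn:car-0-seperate-2} if $\dim V_{1/\sqrt2}=2$, or $\operatorname{Tr}(T|_{V^\perp})=m-n$ if $\dim V_{1/\sqrt2}=1$ (so $m+n=5$).

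If both $\pm\frac{1}{\sqrt2}$ occur, apply the same formula at $X\in V_{\pm1/\sqrt2}$ and estimate in terms of $a=|\langle TX,X\rangle|$. This forces either $V^\perp=V_{1/\sqrt2}\oplus V_{-1/\sqrt2}$, or $\dim V_{\pm1/\sqrt2}=1$ with $m+n=5$. The latter is ruled out by showing $\langle Te_1,e_1\rangle=\langle Te_2,e_2\rangle=0$ and comparing traces. This dimension count at $\pm\frac{1}{\sqrt2}$ is what replaces the sign analysis your Step 1 cannot carry out.
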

	
\begin{proof}
		
Let $\tilde{g}$ denote the number of elements in $\sigma(A|_{V^\perp})$. In the following, we divide the discussion into three cases, according to the intersection of $\sigma(A|_{V^\perp})$ with $\{\pm1/\sqrt2\}$: 
		
{\bf Case-i}: $\pm \frac{1}{\sqrt{2}}\notin \sigma(A|_{V^\perp})$; 
		
{\bf Case-ii}: one of $\{\trh, -\trh\}$ belongs to $\sigma(A|_{V^\perp})$;   
		
{\bf Case-iii}: $\{\trh, -\trh\}\subset\sigma(A|_{V^\perp})$. 
		
In {\bf Case-i}, for any $\lambda\in\sigma(A|_{V^\perp})$, by Corollary \ref{coro:4.2} (2), we have $TV_\lambda \perp V_\lambda$. When $\tilde{g}=1$, then $g\leq2$. 
When $\tilde{g}\geq 2$, we can take a specific $\tilde{\lambda}\in \sigma(A|_{V^\perp})$, $\tilde{\lambda} \neq 0$, such that there is no other element $\mu_i\in \sigma(A|_{V^\perp})$ lying between $\tilde{\lambda}$ and $\frac{1}{2\tilde{\lambda}}$. Up to a sign of unit vector field of $N$, we can assume $\tilde{\lambda}>0$. 
For such $\tilde{\lambda}$, by \eqref{eqn:car-0-seperate-1}, we get 
\begin{equation}\label{eqn:6.as1}
0 = \sum_{i=1,\mu_i\neq \tilde{\lambda}}^{m+n-2}\frac{\tilde{\lambda}\mu_i - \frac{1}{2}}{\tilde{\lambda} - \mu_i} =\tilde{\lambda} \sum_{i=1,\mu_i\neq \tilde{\lambda}}^{m+n-2}\frac{\mu_i - \frac{1}{2\tilde{\lambda}}}{\tilde{\lambda}- \mu_i}.
\end{equation}
Since for any $\mu_i\in \sigma(A|_{V^\perp})$, $\mu_i\neq\tilde{\lambda}$, it holds $\frac{\mu_i - \frac{1}{2\tilde{\lambda}}}{\tilde{\lambda}- \mu_i}\leq0$. Then by  \eqref{eqn:6.as1}, it follows that  $\sigma(A|_{V^\perp}) = \{\lambda, \frac{1}{2\lambda}\}$, which means $\tilde{g}=2$ and $g\leq3$. 
		
\vskip 2mm
		
In {\bf Case-ii}, up to a sign of unit vector field $N$, we can suppose $\trh \in \sigma(A|_{V^\perp})$ and $-\trh \notin \sigma(A|_{V^\perp})$. 
If ${\rm dim}\br{V_{\trh}} \geq m+n-3$, it follows from ${\rm dim}\br{V^\perp}=m+n-2$ that 
$\tilde{g}\leq 2$, then $g\leq3$. 
So we only need to consider the situation that ${\rm dim}\br{V_{\trh}} \leq m+n-4$. 
		
By Corollary \ref{coro:4.2} (2), we have $TV_\lambda \perp V_\lambda$ for any  $\lambda\neq \trh$ and $\lambda\in \sigma(A|_{V^\perp})$. For any unit vector field $X\in V_\trh$, by \eqref{eqn:car-0-general},  we have 
\begin{align*}
0 = & \sum_{i=1, \mu_i\neq\frac{1}{\sqrt{2}}}^{m+n-2}\frac{\frac{1}{\sqrt{2}}\mu_i-\frac{1}{2}}{\frac{1}{\sqrt{2}}-\mu_i} \big(1+\langle TX,X\rangle\langle Te_i,e_i\rangle-2\langle TX,e_i\rangle^2\big)\\
= & \sum_{i=1, \mu_i\neq\frac{1}{\sqrt{2}}}^{m+n-2}-\trh \big(1 - 2\langle TX,e_i\rangle^2\big)\\
= & -\trh \br{m+n-2- {\rm dim}\br{V_\trh} -2\sum_{i=1, \mu_i\neq\frac{1}{\sqrt{2}}}^{m+n-2} \langle TX,e_i\rangle^2}.
\end{align*}
Therefore, we have
$$
m+n-2- {\rm dim}\br{V_\trh} = 2\sum_{i=1, \mu_i\neq\frac{1}{\sqrt{2}}}^{m+n-2}\langle TX,e_i\rangle^2\leq 2|TX|^2 = 2.
$$
By the assumption that $ {\rm dim}\br{V_\trh} \leq m+n-4$, we have ${\rm dim}\br{V_\trh} =m+n-4$, and
\begin{equation}\label{eqn:6.2}
\sum_{i=1, \mu_i\neq\frac{1}{\sqrt{2}}}^{m+n-2}\langle TX,e_i\rangle^2 = |TX|^2 = 1, \
\text{for any $X\in V_\trh$}. 
\end{equation} 
Hence $TV_{\frac{1}{\sqrt{2}}}\subset  (V^\perp\ominus V_{\frac{1}{\sqrt{2}}})$. By ${\rm dim}\br{V_\trh} =m+n-4$ and ${\rm dim}(V^\perp\ominus V_{\frac{1}{\sqrt{2}}})=2$, we have ${\rm dim}\br{V_\trh}\leq2$,  $m+n\leq6$ and $\tilde{g}\leq3$. 
		
Suppose, to the contrary, that $\tilde{g}= 3$.  Then $\sigma(A|_{V^\perp}) = \{\lambda, \mu, \trh\}$, where $\lambda^2\neq \frac{1}{2}$,  $\mu^2\neq \frac{1}{2}$ and $\lambda\neq\mu$. 
It also holds that ${\rm dim}\br{V_{\lambda}} ={\rm dim}\br{V_{\mu}} =1$. By Corollary \ref{coro:4.2} (2), we have $TV_\lambda\perp V_\lambda$ and $TV_\mu\perp V_\mu$. 
		
If ${\rm dim}\br{V_\trh} = 2$, by $TV_{\frac{1}{\sqrt{2}}}\perp V_{\frac{1}{\sqrt{2}}}$, $TV_\lambda\perp V_\lambda$ and $TV_\mu\perp V_\mu$,  
we can take a local principal orthonormal basis of $V^\perp$ such that 
$$
Ae_1 = \lambda e_1,\ Ae_2 = \mu e_2,\ Ae_3 = \trh e_3, Ae_4 = \trh e_4,
$$
and 
\begin{equation}\label{eqn:6.as2}
T\left(
\begin{array}{c}
e_1 \\
e_2 \\
e_3 \\
e_4 \\
\end{array}
\right)= 
\left(
\begin{array}{cccc}
0 & p_{12} & p_{13} & 0\\
p_{12} & 0 & p_{23} & p_{24}\\
p_{13} & p_{23} & 0 & 0\\
0 & p_{24} & 0 & 0
\end{array}
\right)
\left(
\begin{array}{c}
e_1 \\
e_2 \\
e_3 \\
e_4 \\
\end{array}
\right).
\end{equation}
Because $V^\perp$ is $P$-invariant, $T=P$ on $V^\perp$, so we have 
\begin{equation}\label{eqn:6.as3}
p_{24}^2 = 1,\ \  p_{12} = p_{23} = 0,\ \   p_{13}^2 = 1.
\end{equation}
Without loss of generality, suppose $\lambda\neq 0$. For this $\lambda\neq 0$ and $\lambda\neq \pm\trh$, by using \eqref{eqn:6.as2} and \eqref{eqn:6.as3}, it follows from \eqref{eqn:car-0-seperate-2} that 
$$
0 =\frac{\lambda\mu - \frac{1}{2}}{\lambda - \mu}\langle Te_1,e_2\rangle^2+\frac{\lambda\frac{1}{\sqrt{2}}- \frac{1}{2}}{\lambda -\frac{1}{\sqrt{2}}}(\langle Te_1,e_3\rangle^2+\langle Te_1,e_4\rangle^2)=  \frac{\lambda\trh - \frac{1}{2}}{\lambda - \trh} = \trh,
$$
which is a contradiction. 

If ${\rm dim}\br{V_\trh} = 1$, we have $m+n=5$ and ${\rm Tr}P=m-n\neq0$. On the other hand, by $PN=CN+V$ and $PV=-CV+(1-C^2)N$, $TV_\trh \perp V_\trh$, $TV_\lambda \perp V_\lambda$ and $TV_\mu \perp V_\mu$, we obtain ${\rm Tr}P=0$, which is a contradiction. 
		
In conclusion, when $\trh \in \sigma(A|_{V^\perp})$ and $-\trh \notin \sigma(A|_{V^\perp})$, we have $\widetilde g\le2$, and hence $g\le3$.
		
\vskip 2mm
		
In {\bf Case-iii}, when ${\rm dim}\br{V_{\frac{1}{\sqrt{2}}}}+{\rm dim}\br{V_{-\frac{1}{\sqrt{2}}}}=m+n-2$, then $\tilde{g}=2$ and $g\leq3$. 
		
When ${\rm dim}\br{V_{\frac{1}{\sqrt{2}}}}+{\rm dim}\br{V_{-\frac{1}{\sqrt{2}}}}\leq m+n-3$, for any other $\lambda\in \sigma(A|_{V^\perp})$, it follows from Corollary \ref{coro:4.2} (2) that $TV_\lambda \perp V_\lambda$. Now, for any unit vector field $X\in V_{\frac{1}{\sqrt{2}}}$, by \eqref{eqn:car-0-general},  
we get
\begin{align*}
0 = & \sum_{i=1, \mu_i\neq\frac{1}{\sqrt{2}}}^{m+n-2}\frac{\frac{1}{\sqrt{2}}\mu_i-\frac{1}{2}}{\frac{1}{\sqrt{2}}-\mu_i} \big(1+\langle TX,X\rangle\langle Te_i,e_i\rangle-2\langle TX,e_i\rangle^2\big)\\
=& -\trh \left(m+n-2- \operatorname{dim}\br{V_{\frac{1}{\sqrt{2}}}}+ \sum_{\mu_i =-\frac{1}{\sqrt{2}}} \lang{TX, X}\lang{Te_i ,e_i} -2\sum_{\mu_i\neq\frac{1}{\sqrt{2}}} \langle TX,e_i\rangle^2\right).
\end{align*}
For the above vector field $X\in V_{\frac{1}{\sqrt{2}}}$, let 
$|\lang{TX, X}|=a$, then
\begin{align*}
0 = & m+n-2- \operatorname{dim}\br{V_{\frac{1}{\sqrt{2}}}}+ \sum_{\mu_i =-\frac{1}{\sqrt{2}}} \lang{TX, X}\lang{Te_i ,e_i} -2\sum_{\mu_i\neq\frac{1}{\sqrt{2}}} \langle TX,e_i\rangle^2\\
\geq &m+n-2 - \operatorname{dim}\br{V_{\frac{1}{\sqrt{2}}}} - \operatorname{dim}\br{V_{-\frac{1}{\sqrt{2}}}}a- 2(1-\sum_{\mu_i=\frac{1}{\sqrt{2}}} \langle TX,e_i\rangle^2)\\
\geq & m+n-2- \operatorname{dim}\br{V_{\frac{1}{\sqrt{2}}}}- \operatorname{dim}\br{V_{-\frac{1}{\sqrt{2}}}} - 2(1-a^2) + (1 - a)\operatorname{dim}\br{V_{-\frac{1}{\sqrt{2}}}}.
\end{align*}
If ${\rm dim}\br{V_\trh} + {\rm dim}\br{V_{-\trh}} \leq m+n-4$, then
\begin{align*}
0 \geq & m+n-2 - {\rm dim}\br{V_{\frac{1}{\sqrt{2}}}}- {\rm dim}\br{V_{-\frac{1}{\sqrt{2}}}} - 2 + 2a^2 + (1-a){\rm dim}\br{V_{-\frac{1}{\sqrt{2}}}}\\
\geq & 2a^2 + (1-a){\rm dim}\br{V_{-\frac{1}{\sqrt{2}}}}\geq 2a^2 + 1 - a > 0,
\end{align*}
which contradicts $0\leq a\leq 1$. Then ${\rm dim}\br{V_{\trh}}+{\rm dim}\br{V_{-\trh}} =m+n-3$ and $0\geq -1 + 2a^2 + (1-a){\rm dim}\br{V_{-\trh}}$.
If ${\rm dim}\br{V_{-\trh}}\geq 2$, then
$$
0\geq 2a^2 - 1 + (1-a){\rm dim}\br{V_{-\trh}}\geq 2a^2 - 2a + 1 =a^2+(1-a)^2,
$$
which contradicts $0\leq a\leq 1$. Thus   ${\rm dim}\br{V_{-\trh}} = 1$. 
		
On the other hand, for any unit vector field $X\in V_{-\frac{1}{\sqrt{2}}}$, by \eqref{eqn:car-0-general},  we get 
\begin{align*}
0 = & \sum_{i=1, \mu_i\neq-\frac{1}{\sqrt{2}}}^{m+n-2}\frac{-\frac{1}{\sqrt{2}}\mu_i-\frac{1}{2}}{-\frac{1}{\sqrt{2}}-\mu_i} \big(1+\langle TX,X\rangle\langle Te_i,e_i\rangle-2\langle TX,e_i\rangle^2\big)\\
=& \trh \left(m+n-2-{\rm dim}\br{V_{-\frac{1}{\sqrt{2}}}}+ \sum_{\mu_i =\frac{1}{\sqrt{2}}} \lang{TX, X}\lang{Te_i ,e_i} -2\sum_{\mu_i\neq-\frac{1}{\sqrt{2}}} \langle TX,e_i\rangle^2\right).
\end{align*}
By a procedure similar to the one used to prove that ${\rm dim}\br{V_{-\trh}} = 1$, we can obtain that ${\rm dim}\br{V_{\trh}} = 1$. 
Since ${\rm dim}\br{V_{\trh}}+{\rm dim}\br{V_{-\trh}} =m+n-3$, it follows that $\sigma(A|_{V^\perp}) = \{\trh,-\trh, \lambda\}$, where $\lambda\neq\trh$, $\lambda\neq-\trh$ and ${\rm dim}(V_{\trh}) = {\rm dim}(V_{-\trh}) ={\rm dim}(V_{\lambda}) = 1$. 
		
In the following, we take a principal orthonormal basis of $V^\perp$ such that 
$Ae_1=\trh e_1$, $Ae_2=-\trh e_2$ and $Ae_3=\lambda e_3$.  
Substituting $X=e_1$ into \eqref{eqn:car-0-general} again, we get
\begin{equation}\label{eqn:6.ds1}
\begin{aligned}
0 = & -\trh \big(2+ \lang{Te_1, e_1}\lang{Te_2 ,e_2} -2(\langle Te_1,e_2\rangle^2+\langle Te_1,e_3\rangle^2)\big)\\
= & -\trh \big(2+ \lang{Te_1, e_1}\lang{Te_2 ,e_2} - 2 + 2\lang{Te_1, e_1}^2\big)\\
= & -\trh \big(\lang{Te_1, e_1}\lang{Te_2 ,e_2} + 2\lang{Te_1, e_1}^2\big).
\end{aligned}
\end{equation}
Similarly, substituting $X=e_2$ into \eqref{eqn:car-0-general} again, 
we get 
\begin{equation}\label{eqn:6.ds2}
\lang{Te_1, e_1}\lang{Te_2 ,e_2} + 2\lang{Te_2, e_2}^2= 0.
\end{equation}
By \eqref{eqn:6.ds1} and \eqref{eqn:6.ds2}, we conclude 
$$
\lang{Te_1 ,e_1} = \lang{Te_2 ,e_2} = 0.
$$
Since $\lambda^2\neq \frac{1}{2}$, Corollary \ref{coro:4.2} (2) tells us that $\lang{Te_3,e_3} = 0$, so ${\rm Tr}(T|_{V^\perp}) = \lang{Te_1 ,e_1} + \lang{Te_2 ,e_2} + \lang{Te_3 ,e_3} = 0$. But from ${\rm dim}(V^\perp) =m+n-2=3$, we have ${\rm Tr}(T|_{V^\perp})=m-n\neq 0$. 
It is a contradiction. 
\end{proof}
	
\begin{proposition}\label{prop:C=0}
Let $M$ be a connected oriented hypersurface of $\mathbb{H}^m \times \mathbb{H}^n$ ($m\geq3, n\geq2$) with constant principal curvatures and product angle function $C\equiv0$. Then $g\in\{2,3\}$, and up to an isometry of $\mathbb{H}^m \times \mathbb{H}^n$, one of the following three cases occurs: 
\begin{enumerate}
\item $M$ is an open part of $M_{1,-1}^0$ (see Example \ref{exam:3}); or
\item $M$ is an open part of $M_{1,1}^0$ (see Example \ref{exam:4}); or
\item $m=n$, and $M$ is an open part of $M_\tau$ for some $\tau<-1$ (see Example \ref{exam:5}). 
\end{enumerate}
\end{proposition}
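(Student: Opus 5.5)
The plan is to build on the previous proposition, which gives $g\le 3$ and hence $\tilde g\le 2$, where $\tilde g$ is the number of elements of $\sigma(A|_{V^\perp})$. We also know $g\ge 2$. Indeed, if $g=1$ then $A=0$ because $AV=0$, so $M$ is totally geodesic and $C=\pm1$ by \cite{Toj13}, contradicting $C=0$. The case $\tilde g=1$ means $\sigma(A|_{V^\perp})=\{\lambda\}$. Then $A$ commutes with $T$ on $V^\perp$ trivially, and $TV=-CV=0$, so $AT=TA$ and Proposition \ref{prop:ATTA} applies. Since $M_{1,1}^0$ has three distinct principal curvatures, this forces $M\subset M_{1,-1}^0$ with $\lambda=\tfrac1{\sqrt2}$.

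The remaining case is $\tilde g=2$, say $\sigma(A|_{V^\perp})=\{\lambda,\mu\}$ with $\lambda\neq\mu$. We split according to whether $\lambda,\mu\in\{\pm\tfrac1{\sqrt2}\}$.

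\emph{(a) Neither value squares to $\tfrac12$.} Corollary \ref{coro:4.2}(2) gives $TV_\lambda\perp V_\lambda$ and $TV_\mu\perp V_\mu$. Since $T|_{V^\perp}$ is an orthogonal involution, $T$ swaps $V_\lambda$ and $V_\mu$, so $\dim V_\lambda=\dim V_\mu$ and $\mathrm{Tr}\,P=0$, i.e. $m=n$. Lemma \ref{lemma:car-2} with $C=0$ gives $(\lambda\mu-\tfrac12)(1-2\langle TX,Y\rangle^2)=0$ for all unit $X\in V_\lambda$, $Y\in V_\mu$. If $m=n\geq3$, the factor $1-2\langle TX,Y\rangle^2$ cannot vanish for all such pairs (take $Y\perp TX$), so $\lambda\mu=\tfrac12$. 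If $m=n=2$ there is a gap: both eigenspaces are $1$-dimensional and $\langle TX,Y\rangle^2=1$, so the factor is $-1$ and again $\lambda\mu=\tfrac12$. After choosing the sign of $N$ we have $\lambda=\sqrt{\frac{\tau-1}{2(\tau+1)}}$ and $\mu=\sqrt{\frac{\tau+1}{2(\tau-1)}}$ for a unique $\tau<-1$, with $PV_\lambda=V_\mu$; this is exactly the data of $M_\tau$.

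To show $M$ is an open part of $M_\tau$, I would consider the function $f=\langle p,q\rangle$ on $M$, where $(p,q)$ is the position vector in $\mathbb{R}^{n+1}_1\times\mathbb{R}^{n+1}_1$. The aim is to prove that $\nabla f=0$ using the connection data from Lemmas \ref{lemma:4.1} and \ref{lemma:4.3}. Equivalently, one can show directly that the candidate normal $\frac{1}{\sqrt{2(\tau^2-1)}}(q+\tau p,\,p+\tau q)$ coincides with $N$, by checking that the difference is parallel and vanishes at a point. An alternative route is to show that the focal map $x\mapsto \exp(rN)$ at the radius $r$ corresponding to $\tau$ collapses onto the diagonal $\{(p,p)\}$, and then invoke the tube description in Example \ref{exam:5}. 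I expect this rigidity step, identifying $M$ with $M_\tau$ from the pointwise data, to be the main obstacle. It requires computing $\bar\nabla$ of the candidate normal in the ambient flat space using $\nabla_XV=-TAX$ and $PV_\lambda=V_\mu$.

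\emph{(b) At least one value is $\pm\tfrac1{\sqrt2}$.} Say $\lambda=\tfrac1{\sqrt2}$ after fixing the sign of $N$. If also $\mu=-\tfrac1{\sqrt2}$, then by Example \ref{exam:4} with $C=0$ we expect $M_{1,1}^0$, and we need to prove $AT=TA$. Lemma \ref{lemma:4.1}(2) gives $(\lambda\mu-\tfrac12)\langle TX,Y\rangle=(\lambda-\mu)\langle\nabla_VX,Y\rangle$, which combined with Lemma \ref{lemma:car-2} ($\lambda\mu-\tfrac12=-1$, hence $1+\langle TX,X\rangle\langle TY,Y\rangle-2\langle TX,Y\rangle^2=0$) should force $\langle TX,Y\rangle=0$. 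For this I would use a trace/averaging argument over orthonormal bases of $V_\lambda$ and $V_\mu$, as in Case-iii of the previous proof. With $AT=TA$, Proposition \ref{prop:ATTA} then gives $M_{1,1}^0$. If instead $\mu^2\neq\tfrac12$, Corollary \ref{coro:4.2} gives $TV_\mu\perp V_\mu$. From Lemma \ref{lemma:car-2}, $(\tfrac{\mu}{\sqrt2}-\tfrac12)(1-2\langle TX,Y\rangle^2)=0$, together with the dimension count from Case-ii, I would derive either a contradiction or $T$-invariance of both eigenspaces. The latter gives $AT=TA$ and hence $M_{1,-1}^0$ or $M_{1,1}^0$ by Proposition \ref{prop:ATTA}.
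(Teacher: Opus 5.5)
\textbf{Genuine gap: the rigidity step in case (a).} Your preliminary steps are sound: $g\ge 2$; the case $\tilde g=1$ via Proposition \ref{prop:ATTA}, where the paper uses Theorem \ref{thm:1.2} instead; and, in case (a), the data $m=n$, $\lambda\mu=\frac12$, $PV_\lambda=V_\mu$. One small correction: $\tilde g\le 2$ comes from the \emph{proof} of the previous proposition, not from $g\le 3$, since $0$ may lie in $\sigma(A|_{V^\perp})$.

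The step that actually proves case (a) is that $M$ is congruent to $M_\tau$. You rightly flag it as the main obstacle, but you leave it open, and the routes you sketch do not work as stated. The function $f=\langle p,q\rangle$ and the candidate normal $\frac{1}{\sqrt{2(\tau^2-1)}}(q+\tau p,p+\tau q)$ are not invariant under $\mathrm{Iso}(\mathbb{H}^m\times\mathbb{H}^m)$. Indeed, $(\mathrm{id},B)$ maps $M_\tau$ to $\{\langle p,B^{-1}q\rangle=\tau\}$, on which $f$ is not constant. Every hypothesis you have is isometry invariant, so $\nabla f=0$ cannot follow from the connection data; finding the right isometry is the whole difficulty.

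The missing idea is the paper's focal-submanifold argument:
\begin{itemize}
\item Normalize $\lambda>\frac{1}{\sqrt2}>\mu>0$ and set $r_0=\sqrt2\,\mathrm{arccoth}(\sqrt2\lambda)$.
\item By Proposition \ref{pro:7.1}, $\Phi_{r_0}$ has constant rank $m$, with $T\Phi_{r_0}(M)=\mathcal{P}(V_\mu\oplus\mathbb{R}V)$ and $T^\perp\Phi_{r_0}(M)=\mathcal{P}(V_\lambda\oplus\mathbb{R}N)$.
\item Since $PV=N$ when $C=0$, $PV_\mu=V_\lambda$, and $P$ commutes with parallel transport ($\bar\nabla P=0$), $P$ maps the tangent space of the focal submanifold onto its normal space.
\item Then $\langle B(X,Y),PZ\rangle=-\langle B(X,Z),PY\rangle$. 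This form is symmetric in its first two slots and skew in its last two, so it vanishes, and the focal submanifold is totally geodesic.
\item By the classification of totally geodesic submanifolds used in the paper, it is, up to isometry, the diagonal $\{(p,p)\}$. So $M$ is a tube about it, i.e.\ an open part of $M_\tau$.
\end{itemize}
Your third route points in this direction. However, it omits exactly the total-geodesicity argument, and ``collapses onto the diagonal'' can only hold after an isometry.

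\textbf{Case (b) is not worked out either, though the fixes are short.}

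\emph{Subcase $\mu^2\neq\frac12$.} Your alternative ``$T$-invariance of both eigenspaces'' cannot occur. For a unit $Y\in V_\mu$ you have $TY\in V_\lambda$. Putting $X=TY$ into \eqref{eqn:car-0-2} gives $\lambda\mu=\frac12$, i.e.\ $\mu=\frac1{\sqrt2}=\lambda$, a contradiction. The paper handles this together with your case (a) by plugging $Y=TX$ for $X$ in an eigenspace with $\lambda^2\ne\frac12$. This yields $\mu=\frac{1}{2\lambda}\neq\pm\frac{1}{\sqrt2}$ directly.

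\emph{Subcase $\mu=-\frac{1}{\sqrt2}$.} A trace or averaging identity over orthonormal bases does not force $T$-invariance, and Lemma \ref{lemma:4.1}(2) plays no role. Instead:
\begin{itemize}
\item By the symmetry of \eqref{eqn:car-0-2} in $(\lambda,X)$ and $(\mu,Y)$, assume $\dim V_\mu\ge\dim V_\lambda$. Then $\dim V_\mu\ge2$, because $m+n-2\ge3$.
\item For a unit $\tilde X\in V_\lambda$, choose a unit $\tilde Y\in V_\mu$ orthogonal to $T\tilde X$.
\item Then $1+\langle T\tilde X,\tilde X\rangle\langle T\tilde Y,\tilde Y\rangle=0$ forces $\langle T\tilde X,\tilde X\rangle=\pm1$, hence $T\tilde X=\pm\tilde X$ and $TV_\lambda=V_\lambda$.
\item So $AT=TA$, and Proposition \ref{prop:ATTA} gives $M_{1,1}^0$.
\end{itemize}
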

	
\begin{proof}
Previously, we proved that $\tilde{g}\leq 2$. If $0\in \sigma(A|_{V^\perp})$ or $\tilde{g} = 1$,  then $g\leq 2$. By Theorem \ref{thm:1.2} and $C=0$, $M$ is an open part of $M_{1,-1}^{0}$ (see Example \ref{exam:3}). In the following, we only need to consider the situation that $g = 3$. Since $\tilde{g}\leq 2$, we can write $\sigma(A|_{V^\perp}) = \{\lambda, \mu\}$, where $\lambda, \mu\neq 0$ and $\lambda\neq\mu$. By $C = 0$ and \eqref{eqn:car-0-2}, for any unit vector fields $X\in V_\lambda$, $Y\in V_\mu$, we have
\begin{equation}\label{eqn:6.3-1}
\br{\lambda\mu - \frac{1}{2}}\br{1 + \lang{TX,X}\lang{TY,Y} - 2\lang{TX, Y}^2} = 0.
\end{equation}
We now divide the discussion into two cases: 
		
{\bf Case-i}: $\sigma(A|_{V^\perp})=\{\frac{1}{\sqrt 2},-\frac{1}{\sqrt 2}\}$; 
		
{\bf Case-ii}: At least one of $\{\frac{1}{\sqrt 2},-\frac{1}{\sqrt 2}\}$ does not belong to $\sigma(A|_{V^\perp})$. 
		
In {\bf Case-i}, let $\lambda = -\frac{1}{\sqrt{2}}$ and $\mu = \frac{1}{\sqrt{2}}$. 
Without loss of generality, we suppose that ${\rm dim}(V_\mu)\geq {\rm dim}(V_\lambda)$. If ${\rm dim}(V_\mu) = 1$, then ${\rm dim}(V_\lambda) = 1$ and ${\rm dim}(V^\perp) = m + n - 2 = 2$, which contradicts $m\geq3$ and $n\geq2$. 
		
If ${\rm dim}(V_\mu)\geq 2$, then for any unit vector field $\tilde{X}\in V_\lambda$, we can choose some unit vector field $\tilde{Y}\in V_\mu$ such that $\lang{T\tilde{X},\tilde{Y}} = 0$. It follows from \eqref{eqn:6.3-1} that 
\begin{equation}\label{eqn:6.z1}
1 + \lang{T\tilde{X},\tilde{X}}\lang{T\tilde{Y},\tilde{Y}} = 0. 
\end{equation}
From the fact that $\|T\tilde{X}\|=\|T\tilde{Y}\|=1$, by \eqref{eqn:6.z1}, we have 
$$
\lang{T\tilde{X},\tilde{X}} = \pm 1,\ \text{ $\forall \tilde{X}\in V_\lambda$ and $\|\tilde{X}\|=1$}.
$$
It follows that $TV_\lambda=V_\lambda$ and therefore $TV_\mu = V_\mu$. 
Thus, we know that $M$ satisfies $AT=TA$. 
By Proposition \ref{prop:ATTA}, $M$ is an open part of $M_{1,1}^0$. 
		
\vskip 2mm

In {\bf Case-ii}, without loss of generality, we assume that $\lambda\neq \pm\trh$. By Corollary \ref{coro:4.2} (2), we have $TV_\lambda \perp V_\lambda$. 
For any unit vector $\tilde {X} \in V_\lambda$, we have $\lang{T\tilde{X},\tilde{X}}=0$. Substituting $(\tilde X, T\tilde X)$ into \eqref{eqn:6.3-1}, we get 
$$
0=\br{\lambda\mu - \frac{1}{2}}(1 - 2\lang{T\tilde{X},T\tilde{X}}^2) =-\br{\lambda\mu - \frac{1}{2}}.
$$
Thus $\mu = \frac{1}{2\lambda}\neq\pm\trh$. 
By Corollary \ref{coro:4.2} (2), we also have 
$TV_\mu\perp V_\mu$. 
Since $V^\perp=V_\lambda\oplus V_\mu$ and $T|_{V^\perp}$ is an orthogonal involution, we obtain $T(V_\lambda)=V_\mu$ and $T(V_\mu)=V_\lambda$, 
hence ${\rm Tr}(T|_{V^\perp}) = 0$. Since ${\rm Tr}(T|_{V^\perp})=m-n$, we have $m=n$.
In particular, $\dim V_\lambda=\dim V_\mu=m-1$.
		
Now, without loss of generality, we suppose $0<\mu< \frac{1}{\sqrt{2}} < \lambda$. 
Let $\Phi_r(M)$ be the family of parallel hypersurfaces of $M$ defined by 
$$
\begin{aligned}
\Phi_r:\ M &\longrightarrow \mathbb{H}^m\times \mathbb{H}^m,\\
x&\longmapsto {\rm Exp}_x(rN_x),  
\end{aligned}
$$
where ${\rm Exp}$ denotes the exponential map in 
$\mathbb{H}^m\times \mathbb{H}^m$, $N_x$ is the unit normal vector at $x\in M$. 

Let $r_0=\sqrt{2}{\rm arccoth}(\sqrt{2}\lambda)$, then 
$\cosh \br{\frac{r_0}{\sqrt{2}}}-\sqrt{2}\lambda \sinh \br{\frac{r_0}{\sqrt{2}}}=0$.  
Since $C=0$ on $M$, by Proposition \ref{pro:7.1}, 
we have 
$$
{\rm rank}(d\Phi_{r_0})=m,\quad T\Phi_{r_0}(M)=\mathcal{P}(V_{\mu}\oplus \mathbb{R}V),
\quad T^\perp \Phi_{r_0}(M)=\mathcal{P}(V_{\lambda}\oplus \mathbb{R}N),  
$$
where $\mathcal{P}$ is the parallel transport along the normal geodesic to $\Phi_{r_0}(M)$. 
Then $\Phi_{r_0}(M)$ is an $m$-dimensional focal submanifold of $M$. 
Since $P(V_{\mu}\oplus \mathbb{R}V)=V_{\lambda}\oplus \mathbb{R}N$ and $\bar\nabla P=0$, it follows that $PT\Phi_{r_0}(M)=T^\perp \Phi_{r_0}(M)$. Let $B$ be the second fundamental form of $\Phi_{r_0}(M)$. Then for any $X, Y, Z\in T\Phi_{r_0}(M)$, with the use of $\bar{\nabla}P=0$, we have 
\begin{equation}\label{eqn:focal1}
\langle B(X,Y),PZ   \rangle=\langle \bar{\nabla}_X Y,PZ   \rangle
=-\langle \bar{\nabla}_X PZ,Y   \rangle=-\langle \bar{\nabla}_X Z,PY   \rangle=-\langle B(X,Z),PY   \rangle. 
\end{equation}
It follows from \eqref{eqn:focal1} that 
$$
\begin{aligned}
\langle B(X,Y),PZ   \rangle&=-\langle B(X,Z),PY   \rangle=-\langle B(Z,X),PY   \rangle=
\langle B(Z,Y),PX   \rangle \\
&=\langle B(Y,Z),PX   \rangle=-\langle B(Y,X),PZ   \rangle=
-\langle B(X,Y),PZ   \rangle. 
\end{aligned}
$$
Hence $\langle B(X,Y),PZ   \rangle=0$ for any $X, Y, Z\in T\Phi_{r_0}(M)$, 
and $\Phi_{r_0}(M)$ is totally geodesic in $\mathbb{H}^m\times \mathbb{H}^m$. 
Then by the classification result in \cite{Toj13}, up to an ambient isometry, $\Phi_{r_0}(M)$ is an open part of $\{(p,p)\in \mathbb{H}^m\times \mathbb{H}^m\}$.  Moreover, $M$ is a tube around $\Phi_{r_0}(M)$. Thus $M$ is an open part of $M_\tau$ for some $\tau<-1$ (see Example \ref{exam:5}).
\end{proof}

{\bf Completion of the proof of Theorem \ref{thm:1.1}} 

If $C^2=1$, Lemmas \ref{lemma:3.5}--\ref{lemma:3.2aa} yield cases (1) and (2). If $|C|<1$ and $C\neq 0$, Proposition \ref{prop:Cneq0} yields cases (3) and (4). If $C=0$, Proposition \ref{prop:C=0} yields cases (3)--(5). This completes the proof.

\section{Appendix A}\label{sect:7}
	
Let $M$ be a connected oriented hypersurface of $\mathbb{H}^m\times \mathbb{H}^n$ ($m\geq3, n\geq2$) with product angle function $C\equiv0$. Then Corollary \ref{coro:2.2} implies that $V=PN$ is a unit principal vector field on $M$, and it holds $AV=0$. 
At point $x\in M$, we denote the eigenvalues of $A$ restricted to $V^\perp$, 
counted with multiplicity, as follows:  
$$
\lambda_1(x), \dots, \lambda_{m+n-2}(x).
$$
Let $\Phi_r(M)$ be a nearby parallel hypersurface of $M$ defined by 
\begin{equation}\label{eqn:parallel}
\begin{aligned}
\Phi_r:\ M &\longrightarrow \mathbb{H}^m\times \mathbb{H}^n,\\
x&\longmapsto {\rm Exp}_x(rN_x),  
\end{aligned}
\end{equation}
where ${\rm Exp}$ denotes the exponential map in 
$\mathbb{H}^m\times \mathbb{H}^n$, $N_x$ is the unit normal vector at $x\in M$. 
Let $\{e_1,\cdots, e_{m+n-2},V_x\}$ be an orthonormal basis at $x\in M$
such that $Ae_i=\lambda_i(x)e_i$ and $e_i\in V_x^\perp$ for $1\leq i\leq m+n-2$. 
Let $\{e_1^r,\cdots, e_{m+n-2}^r, V_x^r\}$
be the parallel transport of $\{e_1,\cdots, e_{m+n-2},V_x\}$ along the geodesic to the nearby parallel hypersurface $\Phi_r(M)$. 
Then by standard Jacobi field theory, 
we have the following proposition about the parallel hypersurfaces of $M$.
	
\begin{proposition}\label{pro:7.1}
Let $M$ be a connected oriented hypersurface of $\mathbb{H}^m\times \mathbb{H}^n$ ($m\geq3, n\geq2$) with product angle function $C\equiv0$. Then the parallel hypersurface $\Phi_r(M)$ has product angle function $C\equiv0$. For any $x\in M$, the tangent map of $\Phi_r$ has the following expression: 
\begin{equation}\label{eqn:rank}
\left(
\begin{array}{c}
d\Phi_r(e_1) \\
\vdots \\
d\Phi_r(e_{m+n-2}) \\
d\Phi_r(V_x) \\
\end{array}
\right)=(B_{ij})
\left(
\begin{array}{c}
e_1^r \\
\vdots \\
e_{m+n-2}^r \\
V_x^r \\
\end{array}
\right),
\end{equation}
where $(B_{ij})$ is a diagonal matrix
\begin{equation}\label{Bij111}
(B_{ij})=\operatorname{diag}(\cosh(\frac{r}{\sqrt2})-\sqrt2 \lambda_1(x)\sinh(\frac{r}{\sqrt2}),\cdots,\cosh(\frac {r}{\sqrt2})-\sqrt2 \lambda_{m+n-2}(x)\sinh(\frac {r}{\sqrt2}),1).
\end{equation}
		
Furthermore, let $A_r$ be the shape operator of the parallel hypersurface $\Phi_r(M)$ with
respect to the unit normal vector field $\frac{d{\rm Exp}_{x}(rN_{x})}{dr}$, where $x\in M$. Then the expression of $A_r$ at $\Phi_r(x)$ is given by
\begin{equation}\label{eqn:Arr}
\left(
\begin{array}{c}
A_re_1^r \\
\vdots \\
A_r e_{m+n-2}^r \\
A_r V_x^r \\
\end{array}
\right)=(C_{ij})
\left(
\begin{array}{c}
e_1^r \\
\vdots \\
e_{m+n-2}^r \\
V_x^r \\
\end{array}
\right),
\end{equation}
where $(C_{ij})$ is a diagonal matrix
\begin{equation}\label{Bij222}
(C_{ij})=\operatorname{diag}(-\frac1{\sqrt2}\frac{\sinh\frac r{\sqrt2}-\sqrt2\,\lambda_1(x)\cosh\frac r{\sqrt2}}{\cosh\frac r{\sqrt2}-\sqrt2\,\lambda_1(x)\sinh\frac r{\sqrt2}},\cdots,-\frac1{\sqrt2}\frac{\sinh\frac r{\sqrt2}-\sqrt2\,\lambda_{m+n-2}(x)\cosh\frac r{\sqrt2}}{\cosh\frac r{\sqrt2}-\sqrt2\,\lambda_{m+n-2}(x)\sinh\frac r{\sqrt2}},0).
\end{equation}
\end{proposition}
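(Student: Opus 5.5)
The plan is to compute the parallel hypersurfaces by Jacobi fields, using that $C\equiv0$ and $AV=0$ make everything split according to $P$. First I will show that $C$ is preserved. Let $\gamma_x(r)={\rm Exp}_x(rN_x)$, a unit speed geodesic, and let $N_r=\gamma_x'(r)$. Since $\bar\nabla P=0$, the function $\langle P\gamma',\gamma'\rangle$ is constant along $\gamma$, hence equal to $C(x)=0$. Once $N_r$ is identified as the unit normal of $\Phi_r(M)$ (by the Gauss lemma), the product angle function of $\Phi_r(M)$ is identically $0$.

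Next I will compute $d\Phi_r$ by Jacobi fields. For $X\in T_xM$, the field $J_X(r)=d\Phi_r(X)$ is the Jacobi field along $\gamma_x$ with $J(0)=X$ and $J'(0)=-AX$. The curvature operator $\bar R(\cdot,\gamma')\gamma'$ is parallel along $\gamma$, again because $\bar\nabla P=0$. Using the curvature formula of Section \ref{sect:2.1} with $\langle P\gamma',\gamma'\rangle=0$ and $P\gamma'=\mathcal P(V)$, I get
$$\bar R(J,\gamma')\gamma'=-\tfrac12\big(J-\langle J,\gamma'\rangle\gamma'+\langle P\gamma',J\rangle P\gamma'\big).$$
On the parallel transports $e_i^r$ of $e_i\in V^\perp$, this operator acts as $-\tfrac12\,\mathrm{id}$, because $e_i\perp V$ and $e_i\perp N$. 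On $V_x^r=\mathcal P(V)=P\gamma'$ it gives $-\tfrac12(V^r+V^r)=-V^r$, so $\langle \bar R(V^r,\gamma')\gamma',V^r\rangle=0$ in the sign convention of the paper, where sectional curvature is $\langle R(X,Y)Y,X\rangle$ with the formula having an overall minus. I will check this sign carefully. Concretely, the sectional curvature of the plane $\{\gamma',P\gamma'\}$ is $-\tfrac12\big(1-\langle P\gamma',P\gamma'\rangle\big)=0$, since $P\gamma'$ is unit and $\langle P\gamma',\gamma'\rangle=0$. For $e_i$ the sectional curvature is $-\tfrac12$.

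The Jacobi equation then decouples in the parallel frame. For $e_i$ it reads $f''=\tfrac12 f$ with $f(0)=1$ and $f'(0)=-\lambda_i$, so
$$f=\cosh\tfrac r{\sqrt2}-\sqrt2\lambda_i\sinh\tfrac r{\sqrt2}.$$
For $V$ it reads $f''=0$ with $f(0)=1$ and $f'(0)=-\langle AV,V\rangle=0$, so $f\equiv1$. This gives \eqref{eqn:rank} and \eqref{Bij111}. The decoupling requires that $e_i^r$ stay eigenvectors of the parallel curvature operator, which holds because $P$ and $\gamma'$ are parallel. I expect this verification, together with the sign convention, to be the only real obstacle; the rest is formal.

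Finally, the shape operator of $\Phi_r(M)$ with respect to $N_r$ satisfies $A_rJ(r)=-J'(r)$. This is the standard Riccati relation, valid because the $J_X$ are $\Phi_r$-Jacobi fields normal to $\gamma'$. Then $A_r e_i^r=-f_i'/f_i\,e_i^r$. A direct computation gives
$$-\frac{f_i'}{f_i}=-\frac1{\sqrt2}\,\frac{\sinh\frac r{\sqrt2}-\sqrt2\lambda_i\cosh\frac r{\sqrt2}}{\cosh\frac r{\sqrt2}-\sqrt2\lambda_i\sinh\frac r{\sqrt2}},$$
and $A_rV^r=0$ since the $V$-component is constant. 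This yields \eqref{eqn:Arr} and \eqref{Bij222}, valid wherever $\Phi_r$ is an immersion, i.e. wherever all diagonal entries are nonzero.
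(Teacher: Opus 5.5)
Your proposal is correct and follows the paper's route. The paper also solves the Jacobi equation $D''+\bar R^\perp_\gamma\circ D=0$ with $D(0)=\mathrm{id}$, $D'(0)=-A$, using that the normal Jacobi operator has eigenvalue $0$ on $\mathbb{R}V$ and $-\frac12$ on $V^\perp$, and sets $A_r=-D'(r)D(r)^{-1}$, which is exactly your fieldwise computation with $A_rJ=-J'$.

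Two small fixes. Your displayed operator has a sign slip: it should read $\bar R(J,\gamma')\gamma'=-\frac12\big(J-\langle J,\gamma'\rangle\gamma'-\langle P\gamma',J\rangle P\gamma'\big)$, which annihilates $V^r=P\gamma'$ directly, consistent with your sectional-curvature check. Also, to get $J_V'(0)=0$ you need $AV=0$ as a vector (Corollary \ref{coro:2.2}), not just $\langle AV,V\rangle=0$.
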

	
\begin{proof}
At the point $x\in M$, we choose a normal geodesic $\gamma(t)$ such that $\gamma(0)=x$ and $\dot{\gamma}(0)=N_x$. By $\bar{\nabla}_{\dot{\gamma}(t)}\dot{\gamma}(t)=0$ and 
$\bar{\nabla} P=0$, we have 
$$
\dot{\gamma}(t)\langle P\dot{\gamma}(t),\dot{\gamma}(t) \rangle=
\langle \bar{\nabla}_{\dot{\gamma}(t)}P\dot{\gamma}(t),\dot{\gamma}(t) \rangle=0.
$$
It follows from $\langle PN_x,N_x \rangle=0$ that $C=0$ on $\Phi_r(M)$. Put
$$
V_x^\perp=\{X\in T_xM:\langle X,V_x\rangle=0\}. 
$$
Since $M$ has constant product angle function $C=0$, 
it implies that the normal Jacobi operator $\bar{R}_{N_x}$ has two eigenvalues $0$ and $-\frac{1}{2}$ with corresponding eigenspaces ${\rm Span}\{V_x\}$ and $V_x^\perp$. 
		
To calculate the principal curvatures of the parallel hypersurface $\Phi_r(M)$ around
$M$, we use the Jacobi field method as described in \cite[Sec. 8.2]{BCO}. Let $\gamma$ be the
geodesic in $\mathbb{H}^m\times \mathbb{H}^n$ with $\gamma(0)=x\in M$ and $\dot{\gamma}(0)=N_x$ and denote by $\gamma^\perp$ the parallel
subbundle of $T(\mathbb{H}^m\times \mathbb{H}^n)$ along $\gamma$ defined by $\gamma^\perp_{\gamma(t)}
=T_{\gamma(t)}(\mathbb{H}^m\times \mathbb{H}^n)\ominus \mathbb{R}\dot{\gamma}(t)$. Moreover, define the $\gamma^\perp$-valued tensor field $\bar{R}^\perp_{\gamma}$ along $\gamma$ by $\bar{R}^\perp_{\gamma(t)}X=\bar{R}(X,\dot{\gamma}(t))\dot{\gamma}(t)$. 
Now consider the End$(\gamma^\perp)$-valued differential equation
$$
Y''+\bar{R}^\perp_\gamma\circ Y=0. 
$$
Let $D$ be the unique solution of this differential equation with initial values
$$
D(0)={\rm id},\ \ D'(0)=-A,
$$
where ${\rm id}$ denotes the identity transformation. We decompose $\gamma^\perp_x$ further into
$$
\gamma^\perp_x={\rm Span}\{V_x\}\oplus V_x^\perp. 
$$
By explicit computation, we obtain \eqref{eqn:rank}. Moreover, the
shape operator $A_r$ of the parallel hypersurface $\Phi_r(M)$ around $M$ with respect to
$\dot{\gamma}(r)$ is given by $A_r=-D'(r)\circ D^{-1}(r)$. 
Then by further computation, we have \eqref{eqn:Arr}.
\end{proof}
	
\begin{theorem}\label{thm:7.1}
Let $M$ be a connected oriented isoparametric hypersurface of $\mathbb{H}^m\times \mathbb{H}^n$ ($m\geq3, n\geq2$) with product angle function $C\equiv0$. 
Then $M$ has constant principal curvatures, and up to isometries of 
$\mathbb{H}^m\times \mathbb{H}^n$, one of the following three cases occurs: 
\begin{itemize}
\item[(1)] $M$ is an open part of $M_{1,-1}^{0}$ (see Example \ref{exam:3}); or
\item[(2)] $M$ is an open part of $M_{1,1}^{0}$ (see Example \ref{exam:4}); or
\item[(3)] $m=n$, and $M$ is an open part of $M_\tau$ for some $\tau<-1$ (see Example \ref{exam:5}). 
\end{itemize} 
\end{theorem}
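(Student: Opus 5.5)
The plan is to show that isoparametricity together with $C\equiv0$ forces the principal curvatures of $M$ to be constant. Once that is done, Proposition \ref{prop:C=0} gives the classification (1)--(3) directly.

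\textbf{Step 1: the mean curvature of parallel hypersurfaces.} By Corollary \ref{coro:2.2}, $V=PN$ is a unit principal field with $AV=0$. Proposition \ref{pro:7.1} says every parallel hypersurface $\Phi_r(M)$ again has $C\equiv0$, and it gives the principal curvatures of $\Phi_r(M)$ explicitly. Write $s=r/\sqrt2$ and $\nu_i=\sqrt2\lambda_i(x)$. From \eqref{Bij222}, $\sqrt2$ times the $i$-th principal curvature of $\Phi_r(M)$ is
\[
\frac{\nu_i-\tanh s}{1-\nu_i\tanh s}.
\]
Hence the mean curvature of $\Phi_r(M)$ at $\Phi_r(x)$ is, up to a constant factor,
\[
H(r,x)=\sum_{i=1}^{m+n-2}\frac{\nu_i-u}{1-\nu_i u},\qquad u=\tanh s\in(-1,1).
\]
Isoparametricity means that for each small $r$, hence for all $u$ in an open interval around $0$, $H(r,x)$ is independent of $x$.

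\textbf{Step 2: power sums are constant.} Expand in $u$:
\[
\frac{\nu-u}{1-\nu u}=\nu+(\nu^2-1)\sum_{k\ge1}\nu^{k-1}u^k .
\]
The coefficient of $u^k$ in $H$ is therefore $p_{k+1}-p_{k-1}$, where $p_j=\sum_i\nu_i^j$ and $p_0=m+n-2$. Constancy of all these coefficients in $x$, together with constancy of $p_1$ (the $u^0$ term) and $p_0$, gives inductively that every power sum $p_j$ is constant on $M$. By Newton's identities the elementary symmetric functions of $\nu_1,\dots,\nu_{m+n-2}$ are then constant. So the characteristic polynomial of $A|_{V^\perp}$ is constant, and each $\lambda_i$ is constant.

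\textbf{Step 3: smoothness and conclusion.} I expect the main obstacle here. The formulas of Proposition \ref{pro:7.1} are pointwise and use a principal frame that may not be smooth where multiplicities change. The expression for $H$, however, is a symmetric function of the $\lambda_i$; concretely $\operatorname{tr}\bigl((A-\tfrac{u}{\sqrt2})(1-\sqrt2uA)^{-1}\bigr)$ restricted to $V^\perp$. It is therefore smooth in $x$ and analytic in $u$ for $|u|$ small, and the coefficient comparison is legitimate pointwise. One still has to check that isoparametricity really forces $H(r,\cdot)$ to be constant on all of $M$ for every sufficiently small $r$. This holds locally around each point, and the constancy of the $p_j$ then propagates over $M$ by connectedness.

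With constant principal curvatures and $C\equiv0$, Proposition \ref{prop:C=0} applies and yields exactly cases (1)--(3).
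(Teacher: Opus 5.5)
Your proposal is correct and follows essentially the same route as the paper. You use Proposition \ref{pro:7.1} to express the mean curvature of the parallel hypersurfaces through the $\lambda_i$, expand in the distance parameter to see that all power sums are constant, pass via Newton's identities to a constant characteristic polynomial of $A|_{V^\perp}$, and finally invoke the $C=0$ classification (Proposition \ref{prop:C=0}).

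The differences are only cosmetic. The paper writes $H=-(m+n-2)s+(1-2s^2)\sum_i\lambda_i/(1-2\lambda_i s)$, so the power sums appear directly as Taylor coefficients; your expansion produces the differences $p_{k+1}-p_{k-1}$, which need an induction starting from $p_0,p_1$. Your observation that the $p_j$ are locally constant and then constant on $M$ by connectedness handles the local nature of parallel hypersurfaces a bit more carefully than the paper does.
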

	
\begin{proof}
Let $\Phi_r(M)$ be a nearby parallel hypersurface of $M$ defined by 
\eqref{eqn:parallel}, and set $s=\frac1{\sqrt2}\tanh\frac r{\sqrt2}$. 
Then by \eqref{eqn:Arr}, for $x\in M$, the principal curvatures of parallel hypersurface 
$\Phi_r(M)$ at $\Phi_r(x)$ are given by 
$$
0, \ \ \frac{\lambda_i(x)-s}{1-2\lambda_i(x) s}
=-s+(1-2s^2)\frac{\lambda_i(x)}{1-2\lambda_i(x) s}, \ \ 1\leq i\leq m+n-2.
$$
Therefore the mean curvature $H(r)={\rm Tr}A_r$ of $\Phi_r(M)$ is
\begin{equation}\label{eqn:7.ds1}
H(r)=-(m+n-2)s+(1-2s^2)\sum_{i=1}^{m+n-2}
\frac{\lambda_i(x)}{1-2\lambda_i(x)s}.
\end{equation}
		
Since $M$ is isoparametric, by definition the mean curvature of every
nearby parallel hypersurface $\Phi_r(M)$ is constant on $\Phi_r(M)$. Hence the 
right-hand side of \eqref{eqn:7.ds1} is independent of $x$ for each fixed small $r$.
It follows that the function 
$$
F(s,x):=\sum_{i=1}^{m+n-2}\frac{\lambda_i(x)}{1-2\lambda_i(x)s}
={\rm Tr}\left(A(x)\bigl(I-2sA(x)\bigr)^{-1}\right)
$$
is independent of $x$ for every sufficiently small $s$.
		
Consequently, for every integer $k\ge0$, the derivative
$\partial_s^kF(0,x)$ is independent of $x$. Expanding in powers of $s$,
$$
F(s,x)=\sum_{i=1}^{m+n-2}\lambda_i(x)
\sum_{k=0}^\infty\bigl(2\lambda_i(x)s\bigr)^k
=\sum_{k=0}^\infty2^k p_{k+1}(x)\,s^k,
$$
where
$$
p_{k+1}(x)=\sum_{i=1}^{m+n-2}\lambda_i(x)^{k+1}
={\rm Tr}\left(A(x)^{k+1}\right),\qquad k+1\ge1.
$$
Here the zero eigenvalue in the direction of $V$ does not contribute.
Thus
$$
\partial_s^kF(0,x)=2^k k!\,p_{k+1}(x),
$$
and therefore $p_{k+1}(x)$ is constant on $M$ for every $k\ge0$.
		
By Newton's identities,
$$
l\sigma_l(x)=\sum_{i=1}^{l}(-1)^{i-1}\sigma_{l-i}(x)\,p_i(x),
\qquad \sigma_0(x)=1,
$$
where
$$
\sigma_l(x)=\sum_{1\le i_1<\cdots<i_l\le m+n-2}
\lambda_{i_1}(x)\cdots\lambda_{i_l}(x),
\qquad 1\le l\le m+n-2.
$$
Hence every elementary symmetric function $\sigma_l(x)$ is a polynomial
in $p_1(x),\dots,p_l(x)$, and is therefore constant on $M$. It follows
that the characteristic polynomial of $A(x)|_{V^\perp_x}$,
$$
y^{m+n-2}-\sigma_1y^{m+n-3}+\cdots+(-1)^{m+n-2}\sigma_{m+n-2},
$$
is independent of $x$, so the multiset
$$
\{\lambda_1(x),\dots,\lambda_{m+n-2}(x)\}
$$
is independent of $x$. The remaining principal curvature in the direction of $V$ is $0$. 
		
Since $A$ is a smooth symmetric endomorphism, its ordered
eigenvalues are continuous functions on $M$. Each such continuous
function takes values in the fixed finite multiset above. As $M$ is
connected, every such function must be constant. Therefore all
principal curvatures of $M$ are constant. Finally, according to Theorem \ref{thm:1.1} and checking those examples, we can conclude that $M$ is either an open part of $M_{1,-1}^{0}$, or an open part of $M_{1,1}^{0}$, or an open part of $M_\tau$ for some $\tau<-1$. 
\end{proof}

\section{Appendix B}\label{sect:8} 
	
In this section, we give a complete classification of the hypersurfaces of $\mathbb S^m\times\H^n$ with constant product angle function and constant principal curvatures. Furthermore, we will prove that isoparametric hypersurfaces of $\mathbb S^m\times\H^n$ have constant principal curvatures. As a corollary, we give a complete classification of the isoparametric hypersurfaces of $\mathbb S^m\times\H^n$.
	
We endow $\mathbb S^m\times\H^n$ with the product metric $\langle \cdot,\cdot \rangle$. The product structure $P$ on $\mathbb S^m\times\H^n$ is defined by $P: T(\mathbb S^m\times\H^n) \rightarrow T(\mathbb S^m\times\H^n)$ such that
$$
P(v_1,v_2)=(v_1,-v_2), \quad \forall\, v_1\in T\mathbb S^m, v_2\in T\H^n.
$$
Obviously, we have $P^2=\mathrm{id}$ and
\begin{equation}\label{eqn:8.1}
\langle PX, Y\rangle=\langle X,PY\rangle, \quad \forall\,X,Y\in T(\mathbb S^m\times\H^n).
\end{equation}
Moreover, $\bar{\nabla} P=0$, where $\bar{\nabla}$ is the Levi-Civita connection on $\mathbb S^m\times\H^n$.
	
The curvature tensor $\bar{R}$ of $\mathbb S^m\times\H^n$ with the Riemannian product metric is given by
\begin{align*}
\bar{R}(X,Y)Z=\frac12\big(\langle PY,Z\rangle X + \langle Y,Z\rangle PX - \langle PX,Z\rangle Y - \langle X,Z\rangle PY\big),
\end{align*}
where $X, Y, Z\in T(\mathbb S^m\times\H^n)$. 
	
Let $M$ be an orientable hypersurface of $\mathbb S^m\times \H^n$ with $N$ a unit normal vector field. The induced metric on $M$ is still denoted by $\langle\cdot,\cdot\rangle$. Then, with respect to the product structure $P$, the product angle function $C: M\rightarrow\mathbb{R}$ and a vector field $V$ tangent to $M$ are defined by
\begin{align*}
C:=\langle PN,N\rangle,\quad V:=PN-CN.
\end{align*}
It is clear that $-1\leq C\leq 1$ and $\|V\|^2:=\langle V,V\rangle=1-C^2$.
	
Let $T: TM\rightarrow TM$ be the tangential component of the restriction of $P$ to $M$, i.e.,
$$
TX=P X-\langle PX,N\rangle N=P X-\langle X,V\rangle N
$$
for any tangent vector field $X$ of $M$. Let $V^\perp\subset TM$ denote the orthogonal complement distribution of $V$, then $V^\perp$ is $T$-invariant and $T|_{V^\perp}$ is an orthogonal involution, i.e., $(T|_{V^\perp})^2 = \operatorname{id_{V^\perp}}$. 
	
Let $\nabla$ be the Levi-Civita connection of the induced metric on $M$. The Gauss and Weingarten formulas are
\begin{align*}
\bar{\nabla}_X Y=\nabla_X Y+\langle AX,Y\rangle N, \quad \bar{\nabla}_XN=-AX,
\end{align*}
where $A$ is the shape operator of $M$.
	
Now, the Gauss and Codazzi equations of $M$ are given by
\begin{equation}\label{eqn:8.2}
\begin{aligned}
R(X, Y)Z = & \frac12\big(\langle PY,Z\rangle X + \langle Y,Z\rangle TX - \langle PX,Z\rangle Y - \langle X,Z\rangle TY \big)\\
& + \langle AY,Z\rangle AX-\langle AX,Z\rangle AY,
\end{aligned}
\end{equation}
	
\begin{equation}\label{eqn:8.3}
(\nabla_XA)Y-(\nabla_Y A)X = \frac12\big(\langle X,V\rangle Y-\langle Y,V\rangle X \big),
\end{equation}
where $X, Y, Z\in TM$, and $R$ denotes the curvature tensor of $M$ with respect to the metric $\langle\cdot,\cdot\rangle $.
	
Similar to the proof of Lemma \ref{lemma:2.1}, we still have the following properties of the function $C$ and the vector field $V$. 
	
\begin{lemma}\label{lemma:2.1-SH}
Let $M$ be an orientable hypersurface of $\mathbb S^m\times \H^n$ and $A$ the shape operator associated to the unit normal field $N$. Then the gradient of $C$ and the covariant derivative of $V$ are given by
\begin{equation}\label{eqn:8.4}
\nabla C=-2 AV, \quad \nabla_{X} V=C AX -T A X, \ \ \forall\ X\in TM.
\end{equation}
\end{lemma}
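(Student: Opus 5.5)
The argument repeats the proof of Lemma \ref{lemma:2.1} almost word for word. The only ambient facts it uses are that $P$ is symmetric, as in \eqref{eqn:8.1}, and parallel, $\bar\nabla P=0$. Both hold on $\mathbb S^m\times\H^n$, because $P$ is $+\mathrm{id}$ on the first factor's tangent bundle and $-\mathrm{id}$ on the second's, and those subbundles are parallel for the product connection. The curvature of the ambient space never enters, so the sign change of the first factor costs nothing.

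\textbf{Gradient of $C$.} For $X\in TM$, differentiate $C=\langle PN,N\rangle$ with $\bar\nabla P=0$:
$$XC=\langle P\bar\nabla_XN,N\rangle+\langle PN,\bar\nabla_XN\rangle.$$
Symmetry of $P$ combines the two terms into $2\langle \bar\nabla_XN,PN\rangle$, and the Weingarten formula gives $-2\langle AX,PN\rangle$. Since $AX$ is tangent, only $V$, the tangential part of $PN$, contributes. So $XC=-2\langle AX,V\rangle=-2\langle AV,X\rangle$, using that $A$ is self-adjoint. Hence $\nabla C=-2AV$.

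\textbf{Derivative of $V$.} Write $\nabla_XV=\bar\nabla_XV-\langle AX,V\rangle N$ and expand $\bar\nabla_X(PN-CN)$. Using $\bar\nabla P=0$ and $\bar\nabla_XN=-AX$,
$$\bar\nabla_X(PN-CN)=-PAX-(XC)N+CAX.$$
By the first part, $XC=-2\langle AX,V\rangle$. Collecting the normal terms gives
$$\nabla_XV=-PAX+CAX+\langle AX,V\rangle N.$$
By the definition $TY=PY-\langle Y,V\rangle N$ applied to $Y=AX$, we have $-PAX+\langle AX,V\rangle N=-TAX$, so $\nabla_XV=CAX-TAX$.

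No step is a real obstacle; the one point to check is the sign bookkeeping of the normal components. It helps to note that the tangential part of $\bar\nabla_XV$ is unaffected by the normal terms. This confirms that the normal pieces cancel, as they must, since $\nabla_XV$ is tangent.
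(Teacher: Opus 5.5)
Your proposal is correct and takes the same approach as the paper, which proves this lemma simply by pointing back to the proof of Lemma \ref{lemma:2.1}. That proof is exactly your computation: differentiate $C=\langle PN,N\rangle$ and $V=PN-CN$ using $\bar\nabla P=0$, the symmetry of $P$, and the Weingarten formula, after noting that these properties of $P$ still hold on $\mathbb S^m\times\H^n$.
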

	
\begin{corollary}\label{coro:2.2-SH}
When the product angle function $C$ is constant on $M$ and $C\neq \pm1$, we have
\begin{enumerate}
\item $V$ is a principal vector field of $M$, and it satisfies $AV=0$ and $\nabla_V V = 0$.
\item $V^\perp$ is $A$-invariant, i.e., $A(V^\perp) \subset V^\perp$.
\end{enumerate}
\end{corollary}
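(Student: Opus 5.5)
The proof follows Corollary \ref{coro:2.2}, using Lemma \ref{lemma:2.1-SH} in place of Lemma \ref{lemma:2.1}. The point is that the formulas $\nabla C=-2AV$ and $\nabla_XV=CAX-TAX$ are unchanged in $\mathbb S^m\times\H^n$: they use only $P^2=\mathrm{id}$, the symmetry \eqref{eqn:8.1}, $\bar\nabla P=0$ and the Weingarten formula, and never the curvature of the ambient space.

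\textbf{Step 1: $AV=0$.} Since $C$ is constant, Lemma \ref{lemma:2.1-SH} gives $0=\nabla C=-2AV$. Hence $AV=0$. Because $C\neq\pm1$, we have $\|V\|^2=1-C^2>0$, so $V$ is a nowhere-vanishing principal vector field with principal curvature $0$.

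\textbf{Step 2: $\nabla_VV=0$.} Take $X=V$ in the second formula of \eqref{eqn:8.4}:
\[
\nabla_VV=CAV-TAV=0,
\]
since $AV=0$ and $T$ is linear.

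\textbf{Step 3: $V^\perp$ is $A$-invariant.} Let $X\in V^\perp$. By the symmetry of $A$,
\[
\langle AX,V\rangle=\langle X,AV\rangle=0,
\]
so $AX\in V^\perp$.

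No step is a real obstacle. The only thing to check is that the proof of Lemma \ref{lemma:2.1-SH} does not depend on the curvature of the ambient space, which holds because the computation in the proof of Lemma \ref{lemma:2.1} uses only the listed properties of $P$.
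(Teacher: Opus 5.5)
Your proof is correct and is essentially the paper's argument. The paper gives no separate proof and treats the corollary as an immediate consequence of Lemma \ref{lemma:2.1-SH}, exactly as you do: $\nabla C=0$ gives $AV=0$, hence $\nabla_VV=CAV-TAV=0$; self-adjointness of $A$ gives $A(V^\perp)\subset V^\perp$; and $C\neq\pm1$ ensures $V\neq0$.
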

	
\begin{lemma}[\cite{DP}]\label{lemma:3.5-SH}
Let $M$ be a hypersurface of $\mathbb S^m\times\H^n$ with $C^2=1$. Then, up to isometries of $\mathbb S^m \times \H^n$, $M$ is either an open part of hypersurface $\Sigma\times\H^n$, or an open part of hypersurface $\mathbb S^m \times \tilde{\Sigma}$, where $\Sigma$ and $\tilde \Sigma$ are hypersurfaces of $\mathbb S^m$ and $\H^n$, respectively.
\end{lemma}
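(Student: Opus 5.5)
The plan is to reduce the statement to the two factor hypersurfaces by exploiting the splitting of $P$ when $C^2=1$, exactly as in the $\mathbb{H}^m\times\mathbb{H}^n$ case of Lemma \ref{lemma:3.5}. Suppose first $C\equiv 1$ on $M$ (the case $C\equiv -1$ is symmetric, interchanging the roles of the factors). Since $\|V\|^2=1-C^2=0$, we have $PN=N$, so at each point the unit normal $N$ lies in the $T\mathbb S^m$ factor: $N=(N_1,0)$. Consequently the tangent space of $M$ contains the full $T\H^n$ factor, i.e. $\{0\}\times T_y\H^n\subset T_{(x,y)}M$, and $TM=(T\mathbb S^m\cap N^\perp)\oplus T\H^n$. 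Moreover $TX=PX$ for all $X\in TM$, so $TM$ is $P$-invariant.

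Next I would show that the two distributions $\mathcal D_1=\{X\in TM: PX=X\}$ (of rank $m-1$) and $\mathcal D_2=\{X\in TM:PX=-X\}=T\H^n$ (of rank $n$) are both integrable, and that $M$ is locally a product of their leaves. Integrability of $\mathcal D_2$ is immediate, since it is the restriction of the integrable distribution $T\H^n$ of the ambient space, and its leaves are open pieces of the slices $\{x\}\times\H^n$. For $\mathcal D_1$, since $\bar\nabla P=0$ and $PN=N$, for $X,Y\in\mathcal D_1$ we get $P\bar\nabla_XY=\bar\nabla_XY$, so $\bar\nabla_XY$ lies in the $+1$ eigenbundle of $P$, and its tangential part $\nabla_XY$ lies in $\mathcal D_1$. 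Hence $\mathcal D_1$ is integrable, with leaves contained in slices $\mathbb S^m\times\{y\}$. One should also check that $N$ does not depend on the $\H^n$ coordinate. Indeed, for $Z\in\mathcal D_2$ we have $\bar\nabla_ZN=-AZ$, while $\bar\nabla_Z N$ lies in the $+1$ eigenbundle and $AZ$ is tangent. Codazzi \eqref{eqn:8.3} with $V=0$ gives $(\nabla_XA)Y=(\nabla_YA)X$. A simpler route is to use the identity $\langle AZ,W\rangle=\langle \bar\nabla_Z W,N\rangle$ for $W\in\mathcal D_2$: this vanishes because $\bar\nabla_ZW\in T\H^n$ while $N\in T\mathbb S^m$. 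For $W\in \mathcal D_1$, we have $\langle AZ,W\rangle=\langle AW,Z\rangle=\langle\bar\nabla_W Z,N\rangle=0$, since the product connection gives $\bar\nabla_WZ\in T\H^n$ for $Z$ extended as a vector field tangent to the $\H^n$ factor. Hence $A\mathcal D_2=0$, so $N$ is constant along the $\H^n$ directions.

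From $A\mathcal D_2=0$ and the product structure, the projection of $M$ to $\mathbb S^m$ has rank $m-1$ everywhere, and locally its image is a hypersurface $\Sigma\subset\mathbb S^m$. Then $M$ is an open subset of $\pi_1^{-1}(\Sigma)=\Sigma\times\H^n$ by dimension count: $M\subset\Sigma\times\H^n$ and both have dimension $m+n-1$. Symmetrically, $C\equiv-1$ gives $\mathbb S^m\times\tilde\Sigma$. Finally, $C^2=1$ with $C$ continuous and $M$ connected forces $C$ to be constant, either $1$ or $-1$, so the two cases exhaust everything.

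The main obstacle is the step showing that the projection to the first factor has constant rank $m-1$ and locally a well-defined image hypersurface, independent of the $\H^n$ coordinate. I expect to handle it via $A\mathcal D_2=0$ together with the de Rham-type local splitting, using the fact that both distributions are parallel in $M$. For $\mathcal D_2$, parallelism means that for arbitrary $X$ and $Z\in\mathcal D_2$, the vector $\bar\nabla_XZ$ lies in $T\H^n$, which sits inside $TM$.
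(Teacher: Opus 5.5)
Your proof is correct, and it is self-contained in a way the paper is not. The paper gives no argument for this lemma, nor for its twin Lemma~\ref{lemma:3.5} in $\H^m\times\H^n$; it simply quotes both from \cite{DP}. Your route uses only $PN=\pm N$ and $\bar\nabla P=0$, never the curvature of the factors. So the same argument proves Lemma~\ref{lemma:3.5} and the present statement at once, and in fact works for any Riemannian product. The citation buys brevity; your argument buys transparency and generality. As in the intended statement, what you prove is local (``open part''), and connectedness of $M$ is what selects a single alternative.

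You can trim the argument considerably, and the step you single out as the main obstacle is actually immediate. Once $N=(N_1,0)$, the kernel of $d\pi_1$ on $T_{(x,y)}M$ is exactly $\{0\}\times T_y\H^n$, which has dimension $n$. So $\pi_1|_M$ has rank $m-1$ at every point, with no further input. The constant rank theorem then gives, around any point, a neighborhood $U\subset M$ such that $\Sigma:=\pi_1(U)$ is an embedded hypersurface of $\mathbb S^m$. Then $U\subset\Sigma\times\H^n$ with equal dimensions, so $U$ is open in $\Sigma\times\H^n$.

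Consequently $A\mathcal D_2=0$, the integrability of $\mathcal D_1$, and any de Rham splitting are true but superfluous. De Rham by itself would in any case only split $M$ intrinsically, whereas the conclusion is extrinsic. If you want to keep the foliation picture, finish with Frobenius coordinates $(s,t)$ adapted to $\mathcal D_1\oplus\mathcal D_2$. Since $\partial_t\in T\H^n$ and $\partial_s\in T\mathbb S^m$, the immersion has the form $(x(s),y(t))$.

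Also delete the abandoned Codazzi sentence in your third paragraph. If you want $A\mathcal D_2=0$ at all, the quickest proof is this: $P\bar\nabla_XN=\bar\nabla_XN$ gives $A(TM)\subset\mathcal D_1$. Symmetry of $A$ then yields $\langle AW,X\rangle=\langle W,AX\rangle=0$ for $W\in\mathcal D_2$ and $X\in TM$.
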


By arguments analogous to those in Lemma \ref{lemma:3.2} and Lemma \ref{lemma:3.2aa}, we have 
\begin{lemma}\label{lemma:3.2aas1}
For any hypersurface $\Sigma\times\H^n$ of $\mathbb{S}^m\times\mathbb{H}^n$ (resp. 
$\mathbb S^m\times\tilde \Sigma$ of $\mathbb S^m\times\H^n$), 
the following three statements are equivalent: 
\begin{enumerate}
\item[(1)] $\Sigma\times\H^n$ (resp. 
$\mathbb S^m\times\tilde \Sigma$) has constant principal curvatures; 
\item[(2)] $\Sigma\times\H^n$ (resp. 
$\mathbb S^m\times\tilde \Sigma$) is an isoparametric hypersurface; 
\item[(3)] $\Sigma$ (resp. 
$\tilde \Sigma$) is a hypersurface in $\mathbb{S}^m$ (resp. 
$\mathbb H^n$) with constant principal curvatures. 
\end{enumerate}
\end{lemma}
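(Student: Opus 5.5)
We prove the three-way equivalence by the same reduction used for Lemmas \ref{lemma:3.2} and \ref{lemma:3.2aa}: all relevant geometric quantities of a product hypersurface are computed from the factor hypersurface. We treat $\Sigma\times\H^n$; the case $\mathbb S^m\times\tilde\Sigma$ is identical with the roles of the factors exchanged.

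\emph{Principal curvatures.} If $N_\Sigma$ is a unit normal of $\Sigma\hookrightarrow\mathbb S^m$, then $N=(N_\Sigma,0)$ is a unit normal of $\Sigma\times\H^n$, and $C\equiv1$. Since the metric and the Levi-Civita connection split, the shape operator is $A=A_\Sigma\oplus 0$. Hence the principal curvatures are those of $\Sigma$, together with $0$ of multiplicity $n$. This gives (1)$\Leftrightarrow$(3) at once: the principal curvatures of $\Sigma\times\H^n$ are constant exactly when those of $\Sigma$ are.

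\emph{Parallel hypersurfaces.} Geodesics of $\mathbb S^m\times\H^n$ are products of geodesics in the two factors. The normal geodesic from $(x,y)$ in direction $(N_\Sigma,0)$ is $(\gamma_x(t),y)$, where $\gamma_x$ is the normal geodesic of $\Sigma$ in $\mathbb S^m$. Therefore the parallel hypersurface of $\Sigma\times\H^n$ at distance $l$ is $\Sigma_l\times\H^n$, with $\Sigma_l$ the parallel hypersurface of $\Sigma$ at distance $l$. By the computation above, its mean curvature equals the mean curvature of $\Sigma_l$ in $\mathbb S^m$.

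It follows that $\Sigma\times\H^n$ is isoparametric, in the sense of having parallel hypersurfaces of constant mean curvature, if and only if $\Sigma$ is isoparametric in $\mathbb S^m$. This is where the only real input enters, replacing Cartan's theory in $\H^m$ used for Lemma \ref{lemma:3.2}. In the space form $\mathbb S^m$, Cartan's classical result says a hypersurface is isoparametric if and only if it has constant principal curvatures. One direction follows from the explicit formula $\cot(\theta_i-l)$ for the principal curvatures of $\Sigma_l$. The converse follows from the standard power-sum and Newton-identity argument, as in the proof of Theorem \ref{thm:7.1}: constancy of $H(l)$ for all small $l$ forces all power sums $\mathrm{Tr}A_\Sigma^k$, and hence all the principal curvatures, to be constant. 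This yields (2)$\Leftrightarrow$(3), and together with (1)$\Leftrightarrow$(3) the lemma follows.

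For $\mathbb S^m\times\tilde\Sigma$ we invoke the same Cartan result in $\H^n$, exactly as in Lemma \ref{lemma:3.2aa}. The main obstacle is only bookkeeping: we must check that the factor-wise description of normal geodesics holds and that the equivalence can be taken locally, i.e.\ for small $l$ on open parts. Both are immediate from the product structure.
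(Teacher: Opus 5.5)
Your proposal is correct and follows essentially the same route as the paper, which proves this lemma ``by arguments analogous to'' Lemmas \ref{lemma:3.2} and \ref{lemma:3.2aa}: the shape operator splits as $A_\Sigma\oplus 0$, the parallel hypersurfaces are the products $\Sigma_l\times\H^n$ (resp.\ $\mathbb S^m\times\tilde\Sigma_l$), and Cartan's equivalence in the space-form factor finishes the argument. Your power-sum/Newton-identity sketch of Cartan's equivalence, modeled on Theorem \ref{thm:7.1}, just spells out what the paper cites.
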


When $M$ is a hypersurface of $\mathbb S^m\times\H^n$ with constant principal curvatures and constant product angle function $C\neq\pm1$, we denote the set of eigenvalues of $A$ restricted to $V^\perp$ by $\sigma(A|_{V^\perp})$. For any $\lambda\in\sigma(A|_{V^\perp})$, let $V_\lambda$ be the corresponding eigenspace restricted to $V^\perp$. 
	
\begin{lemma}\label{lemma:4.1-SH}
Let $M$ be a hypersurface of $\mathbb S^m\times\H^n$ with constant principal curvatures and constant product angle function $C\neq\pm1$.
\begin{enumerate}
\item 
For any $\lambda\in\sigma(A|_{V^\perp})$ and any $X,Y\in V_\lambda$, there holds  
\begin{equation}\label{eqn:lem:4.1-SHss1}
\lambda\neq0, \quad\langle TX,Y\rangle = \left(C-\frac{1-C^2}{2\lambda^2}\right)\langle X,Y\rangle. 
\end{equation} 
\item For any $\lambda,\mu\in\sigma(A|_{V^\perp})$,  $\lambda\neq \mu$, any $X\in V_\lambda$, $Y\in V_\mu$ there holds
\begin{equation}\label{eqn:lem:4.1-SHss2}
\lambda\mu\langle TX,Y\rangle = (\lambda-\mu)\langle\nabla_VX,Y\rangle. 
\end{equation} 
\end{enumerate}
\end{lemma}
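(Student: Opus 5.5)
We mirror the proof of Lemma \ref{lemma:4.1}, replacing the Codazzi equation \eqref{eqn:2.3} by \eqref{eqn:8.3}. By Corollary \ref{coro:2.2-SH}, $AV=0$, $\nabla_VV=0$ and $V^\perp$ is $A$-invariant. Fix $\lambda\in\sigma(A|_{V^\perp})$ and $X\in V_\lambda$. Using $AV=0$, we get
$$(\nabla_VA)X-(\nabla_XA)V=\lambda\nabla_VX-A(\nabla_VX)+A(\nabla_XV).$$
By \eqref{eqn:8.3}, the left-hand side equals $\tfrac12\big(\langle V,V\rangle X-\langle X,V\rangle V\big)=\tfrac12(1-C^2)X$, because $X\perp V$. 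This is the only structural change from the $\mathbb H^m\times\mathbb H^n$ case: the right side of Codazzi is now a multiple of $X$ rather than of $TX$.

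For (1), take $Y\in V_\lambda$ and pair with $Y$. The terms $\lambda\langle\nabla_VX,Y\rangle-\langle A\nabla_VX,Y\rangle$ cancel, since $A$ is symmetric and $AY=\lambda Y$. What remains is
$$\lambda\langle\nabla_XV,Y\rangle=\tfrac12(1-C^2)\langle X,Y\rangle.$$
Lemma \ref{lemma:2.1-SH} gives $\nabla_XV=CAX-TAX=\lambda(CX-TX)$, so
$$\lambda^2\big(C\langle X,Y\rangle-\langle TX,Y\rangle\big)=\tfrac12(1-C^2)\langle X,Y\rangle.$$
Now set $Y=X$ with $X\neq0$. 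If $\lambda=0$, the left side vanishes while the right side is $\tfrac12(1-C^2)|X|^2\neq0$, because $C\neq\pm1$. Hence $\lambda\neq0$, and dividing by $\lambda^2$ yields \eqref{eqn:lem:4.1-SHss1}.

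For (2), take $Y\in V_\mu$ with $\mu\neq\lambda$ and pair with $Y$. The right-hand side $\tfrac12(1-C^2)\langle X,Y\rangle$ vanishes, since distinct eigenspaces are orthogonal. Symmetry of $A$ turns the left-hand side into
$$(\lambda-\mu)\langle\nabla_VX,Y\rangle+\mu\langle\nabla_XV,Y\rangle.$$
Substituting $\nabla_XV=\lambda(CX-TX)$ and using $\langle X,Y\rangle=0$, the last term becomes $-\lambda\mu\langle TX,Y\rangle$. This gives \eqref{eqn:lem:4.1-SHss2}.

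The only delicate step is keeping the sign conventions of \eqref{eqn:8.3} and of $\nabla_XV$ straight. The argument uses no new idea beyond the one already used in Lemma \ref{lemma:4.1}.
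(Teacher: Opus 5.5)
Your proposal is correct and follows the paper's proof essentially verbatim. Both apply the Codazzi equation \eqref{eqn:8.3} to the pair $(V,X)$ to get $\tfrac12(1-C^2)X$, pair with $Y\in V_\lambda$ (resp.\ $Y\in V_\mu$), and substitute $\nabla_XV=\lambda(CX-TX)$ to obtain both identities. In both, $\lambda\neq0$ is forced by $C\neq\pm1$.
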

	
\begin{proof}
Suppose $X\in V_\lambda$ is a principal direction with respect to $\lambda \in \sigma(A|_{V^\perp})$. By Codazzi equation \eqref{eqn:8.3} and $AV=0$, we have 
\begin{equation}\label{eqn:lem:4.1-SH}
(\nabla_{V}A)X - (\nabla_{X}A) V =\lambda \nabla_V X - A(\nabla_V X) + A(\nabla_X V) = \frac{1}{2} (1-C^2) X.
\end{equation} 

(1) Taking the inner product of \eqref{eqn:lem:4.1-SH} with $Y\in V_\lambda$, with the use of Lemma \ref{lemma:2.1-SH}, we have
$\lambda^2\langle TX,Y\rangle
=\left(C\lambda^2-\frac{1-C^2}{2}\right)\langle X,Y\rangle$. 
If $\lambda=0$, then it holds 
$\frac{1-C^2}{2}\langle X,Y\rangle=0$ for any $X,Y\in V_\lambda$, which implies $C=\pm1$. It is a contradiction. Thus, we get \eqref{eqn:lem:4.1-SHss1}. 

(2) Taking the inner product of \eqref{eqn:lem:4.1-SH} with $Y\in V_\mu$ ($\mu\neq\lambda$), with the use of Lemma \ref{lemma:2.1-SH} and $\langle X,Y\rangle=0$, we get \eqref{eqn:lem:4.1-SHss2}. 
\end{proof}
	
In the following, we derive a Cartan's formula in $\mathbb S^m\times \H^n$. 
	
\begin{lemma}\label{lemma:car-seperate-SH}
Let $M$ be a hypersurface of $\mathbb S^m\times\H^n$ ($m,n\geq2$) with constant principal curvatures and constant product angle function $C\neq\pm1$. Let $X\in V^{\perp}$ be a unit principal vector at a point $p$ with associated principal curvature $\lambda$. For any principal orthonormal basis $\{e_i\}_{i=1}^{m+n-2}$ of $V^{\perp}$ satisfying $Ae_i=\mu_ie_i$, we have
\begin{equation}\label{eqn:car-seperate-2-SH}
\sum_{i = 1,\mu_i\neq \lambda}^{m+n-2} \frac{\lambda\mu_i}{\lambda - \mu_i}\langle TX, e_i\rangle^2 = 0. 
\end{equation}
\end{lemma}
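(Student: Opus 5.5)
We follow the proof of Lemma \ref{lemma:car-seperate} essentially verbatim, now using the Gauss and Codazzi equations \eqref{eqn:8.2}, \eqref{eqn:8.3} of $\mathbb S^m\times\H^n$. The idea is to compute the sectional curvature $R(X,\bar V,\bar V,X)$, with $\bar V=V/\sqrt{1-C^2}$, in two ways. The first way is intrinsic, through the connection. The second is via the Gauss equation. Comparing them should give $\langle\nabla_VX,\nabla_XV\rangle=0$. We then expand this identity in a principal frame using Lemma \ref{lemma:4.1-SH} (2).

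\textbf{Step 1: the intrinsic computation.} We write
$(1-C^2)R(X,\bar V,\bar V,X)=(I)+(II)+(III)$ exactly as before.

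For $(I)=\langle\nabla_X\nabla_VV,X\rangle$: by Corollary \ref{coro:2.2-SH}, $\nabla_VV=0$, so $(I)=0$.

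For $(II)$: we have $\langle\nabla_XV,X\rangle=\lambda\langle CX-TX,X\rangle$. By \eqref{eqn:lem:4.1-SHss1} this equals $\lambda\frac{1-C^2}{2\lambda^2}$, which is constant; note that $\lambda\neq0$ automatically here. Hence $(II)=\langle\nabla_XV,\nabla_VX\rangle$.

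For $(III)=-\langle\nabla_{[X,V]}V,X\rangle=\frac1\lambda\langle(\nabla_{[X,V]}A)V,X\rangle$: apply \eqref{eqn:8.3}, whose right-hand side is now $\frac12(\langle X,V\rangle Y-\langle Y,V\rangle X)$ rather than a $T$-term. Using $\langle[X,V],V\rangle=0$ and $\langle X,V\rangle=0$, we get
\[
(III)=\frac1\lambda\Big(\langle[X,V],(\nabla_VA)X\rangle+\tfrac12(1-C^2)\langle[X,V],X\rangle\Big).
\]
Then we rewrite $\langle[X,V],(\nabla_VA)X\rangle$ as in \eqref{eqn:4.1ff3}. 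Commuting $(\nabla_VA)X$ to $(\nabla_XA)V$ via Codazzi produces $\lambda\langle\nabla_XV,\nabla_VX\rangle+\frac12(1-C^2)\langle\nabla_VX,X\rangle$, and the last term vanishes because $X$ is a unit field. Combining,
\[
(III)=\langle\nabla_XV,\nabla_VX\rangle+\tfrac{1-C^2}{2\lambda}\langle\nabla_XV,X\rangle=\langle\nabla_XV,\nabla_VX\rangle+\tfrac{(1-C^2)^2}{4\lambda^2}.
\]

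\textbf{Step 2: the Gauss equation.} By \eqref{eqn:8.2},
\[
R(X,\bar V,\bar V,X)=\tfrac12\big(\langle P\bar V,\bar V\rangle+\langle TX,X\rangle\big)=\tfrac12\Big(-C+C-\tfrac{1-C^2}{2\lambda^2}\Big),
\]
using $\lambda\neq0$ and the shape-operator contribution $\langle A\bar V,\bar V\rangle\langle AX,X\rangle-\langle A\bar V,X\rangle^2=0$ since $AV=0$. So $(1-C^2)R(X,\bar V,\bar V,X)=-\frac{(1-C^2)^2}{4\lambda^2}$.

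Equating with Step 1 gives $2\langle\nabla_XV,\nabla_VX\rangle=-\frac{(1-C^2)^2}{2\lambda^2}$. This does not vanish, so the constants must be checked more carefully. The main obstacle is tracking these constants (signs of $\langle P\bar V,\bar V\rangle$, $\langle T\bar V,\bar V\rangle$, and the $X$-term in Codazzi) exactly. I expect the correct bookkeeping to give $\langle\nabla_VX,\nabla_XV\rangle=0$. If a nonzero constant survives, it is a multiple of $\|X\|^2$, and I would instead take the component of $\nabla_VX$ orthogonal to $V_\lambda$, where the identity is purely off-diagonal.

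\textbf{Step 3: expand in the frame.} Since $\nabla_XV=\lambda(CX-TX)$, we have
\[
\langle\nabla_VX,\nabla_XV\rangle=-\lambda\langle\nabla_VX,TX\rangle+\lambda C\langle\nabla_VX,X\rangle.
\]
The last term vanishes, so $\langle\nabla_VX,TX\rangle$ is determined. Expand it in the frame $\{e_i,\bar V\}$.

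The $V_\lambda$-components drop out. Indeed, $T$ acts on $V_\lambda$ as a scalar by \eqref{eqn:lem:4.1-SHss1}, so $\langle TX,e_i\rangle=0$ for $e_i\in V_\lambda$ with $e_i\perp X$, and $\langle\nabla_VX,X\rangle=0$. The $\bar V$-component drops out because $TX\perp V$.

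For $\mu_i\neq\lambda$, Lemma \ref{lemma:4.1-SH} (2) gives $\langle\nabla_VX,e_i\rangle=\frac{\lambda\mu_i}{\lambda-\mu_i}\langle TX,e_i\rangle$. Therefore
\[
0=\langle\nabla_VX,TX\rangle=\sum_{\mu_i\neq\lambda}\frac{\lambda\mu_i}{\lambda-\mu_i}\langle TX,e_i\rangle^2,
\]
which is \eqref{eqn:car-seperate-2-SH}.
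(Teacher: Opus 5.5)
Your strategy is the same as the paper's, and your Steps 2 and 3 are correct. However, Step 1 has a sign error at exactly the point where the content of the lemma lies. As written, your computation yields $2\langle\nabla_XV,\nabla_VX\rangle=-\frac{(1-C^2)^2}{2\lambda^2}\neq0$, and you then only express the expectation that careful bookkeeping gives $0$.

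The error is that you carried over the sign pattern of \eqref{eqn:4.1ff2} from $\mathbb H^m\times\mathbb H^n$. In $\mathbb S^m\times\mathbb H^n$, the Codazzi equation \eqref{eqn:8.3} applied to the pair $([X,V],V)$ gives
\[
(\nabla_{[X,V]}A)V=(\nabla_VA)[X,V]+\tfrac12\big(\langle[X,V],V\rangle V-\langle V,V\rangle[X,V]\big)=(\nabla_VA)[X,V]-\tfrac12(1-C^2)[X,V].
\]
So the correction term in $(III)$ is $-\frac12(1-C^2)\langle[X,V],X\rangle=-\frac12(1-C^2)\langle\nabla_XV,X\rangle$, not $+$. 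The same flip occurs when you commute $(\nabla_VA)X=(\nabla_XA)V+\frac12(1-C^2)X$, but there it is harmless because $\langle\nabla_VX,X\rangle=0$.

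With the correct sign,
$(III)=\langle\nabla_XV,\nabla_VX\rangle-\frac{1-C^2}{2\lambda}\langle\nabla_XV,X\rangle=\langle\nabla_XV,\nabla_VX\rangle+\frac{1-C^2}{2}\big(\langle TX,X\rangle-C\big)$.
This last term cancels the Gauss value $(1-C^2)R(X,\bar V,\bar V,X)=\frac{1-C^2}{2}\big(\langle TX,X\rangle-C\big)$ identically. The result is $\langle\nabla_VX,\nabla_XV\rangle=0$, exactly as in \eqref{eqn:8.10}--\eqref{eqn:8.11}.

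Your proposed fallback would not have rescued the argument if a nonzero constant $c$ had really survived. By \eqref{eqn:lem:4.1-SHss1}, the $V_\lambda$-component of $TX$ is $\big(C-\frac{1-C^2}{2\lambda^2}\big)X$, and $\langle\nabla_VX,X\rangle=0$. Hence $\langle\nabla_VX,TX\rangle$ is already the purely off-diagonal sum, and projecting $\nabla_VX$ off $V_\lambda$ changes nothing. A surviving $c$ would simply give $\sum_{\mu_i\neq\lambda}\frac{\lambda\mu_i}{\lambda-\mu_i}\langle TX,e_i\rangle^2=-c/\lambda\neq0$, contradicting \eqref{eqn:car-seperate-2-SH}. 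So the exact cancellation is indispensable. Once the sign is fixed, your Step 3 completes the proof just as the paper does.
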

	
\begin{proof}
Extend $X$ to be a principal vector field near $p$. Now, we 
consider the sectional curvature $R(X, \bar V, \bar V, X)$, where $\bar V = \frac{V}{\sqrt{1-C^2}}$. The definition of Riemannian curvature gives
$$
(1-C^2)R(X, \bar V , \bar V , X) = \underbrace{\langle\nabla_{X}\nabla_{V} V,X\rangle}_{(I)}\underbrace{ - \langle\nabla_{V}\nabla_{X} V,X\rangle}_{(II)}\underbrace{- \langle\nabla_{[X,V]} V,X\rangle}_{(III)}.
$$
By Corollary \ref{coro:2.2-SH}, we have $\nabla_V V = 0$, hence $(I)= 0$. 
In the following, we compute the terms (II) and (III). 
		
By Lemma \ref{lemma:2.1-SH} and Lemma \ref{lemma:4.1-SH}, we have  
\begin{equation}\label{X-V-X}
\langle\nabla_{X} V,X\rangle = \lambda\lang{CX - TX, X} = \frac{1-C^2}{2\lambda},
\end{equation}
which means that $\langle\nabla_{X} V,X\rangle$ is a constant. As a consequence, the second term $(II)$ is
\begin{equation}\label{eqn:8.7}
(II)= - V(\langle\nabla_{X} V,X\rangle)+ \langle\nabla_{X} V,\nabla_{V} X\rangle = \langle\nabla_{X} V,\nabla_{V} X\rangle.
\end{equation}
From Corollary \ref{coro:2.2-SH}, $\nabla_V V = 0$, we have 
$\langle [X, V], V \rangle = \lang{\nabla_X V, V} - \lang{\nabla_V X, V} = 0$. Then by Lemma \ref{lemma:4.1-SH} and the Codazzi equation \eqref{eqn:8.3}, the third term $(III)$ can be expressed as
\begin{equation}\label{eqn:8.8}
\begin{aligned}
(III)=& - \langle\nabla_{[X,V]} V,X\rangle = \frac{1}{\lambda}\langle (\nabla_{[X,V]}A) V,X \rangle\\
= & \frac{1}{\lambda}\left(\langle ( \nabla_{V}A) [X,V],X \rangle - \frac{1}{2}(1-C^2)\langle [X,V],X \rangle \right) \\
= & \frac{1}{\lambda}\left(\left\langle [X,V], (\nabla_{V}A) X \right\rangle - \frac{1}{2}(1-C^2)\langle \nabla_X V, X\rangle\right).
\end{aligned}
\end{equation}
On the other hand, by using Codazzi equation \eqref{eqn:8.3}, we have
\begin{equation}\label{eqn:8.9}
\begin{aligned}
& \left\langle [X,V], (\nabla_{V}A) X \right\rangle = \langle \nabla_{X} V, (\nabla_{V}A) X \rangle - \langle \nabla_{V}X, (\nabla_{V}A) X \rangle \\
= & \langle \nabla_{X}V, (\nabla_{V}A) X \rangle - \langle \nabla_{V}X, (\nabla_{X}A) V \rangle - \frac{1}{2}(1-C^2)\langle \nabla_{V}X, X \rangle \\
= & \lambda\langle \nabla_{X}V, \nabla_{V}X \rangle - \langle \nabla_{X}V, A(\nabla_{V}X) \rangle+ \langle \nabla_{V}X, A(\nabla_{X} V) \rangle\\
= & \lambda \langle \nabla_{X} V, \nabla_{V}X \rangle.
\end{aligned}
\end{equation}
Hence by \eqref{eqn:8.8} and \eqref{eqn:8.9}, we get 
\begin{equation}\label{eqn:8.10}
\begin{aligned}
(III)= & \frac{1}{\lambda}\left(\left\langle [X,V], (\nabla_{V}A) X \right\rangle - \frac{1}{2}(1-C^2)\langle \nabla_X V, X\rangle\right)\\
= & \frac{1}{\lambda}\br{\lambda \langle \nabla_{X} V, \nabla_{V}X \rangle - \frac{1}{2}(1-C^2)\langle \nabla_X V, X\rangle}\\
= & \langle \nabla_{X} V, \nabla_{V}X \rangle - \frac{1}{2\lambda}(1-C^2)\langle \nabla_{X}V,X \rangle\\
= & \langle \nabla_{X}V, \nabla_{V}X \rangle + \frac{1-C^2}{2}(\langle TX,X\rangle-C).
\end{aligned}
\end{equation}
		
\vskip 4mm
		
Combining $(I)=0$, \eqref{eqn:8.7} and \eqref{eqn:8.10}, we obtain
$$
(1-C^2)R(X, \bar V , \bar V , X) = 2\langle \nabla_{X}V, \nabla_{V}X \rangle + \frac{1-C^2}{2}(\langle TX,X\rangle-C).
$$
On the other hand, Gauss equation \eqref{eqn:8.2} implies that 
$$
R(X, \bar V, \bar V, X)  = \frac{1}{2}\left(\langle TX, X\rangle -C\right).
$$
Thus, the preceding two equations yield
\begin{equation}\label{eqn:8.11}
\langle \nabla_{V}X, \nabla_{X}V\rangle = 0.
\end{equation}
By Lemma \ref{lemma:4.1-SH}, $\nabla_X V = CAX - TAX$ and \eqref{eqn:8.11}, we get 
$$
0 = \langle \nabla_{V}X, \nabla_{X}V\rangle = \lambda\langle \nabla_{V} X, CX - TX\rangle = -\lambda\langle \nabla_{V} X,  TX\rangle.
$$
It follows from $\lambda\neq0$ that $\langle \nabla_{V} X,  TX\rangle=0$. Hence, for any unit vector $X\in V_\lambda$, by using Lemma \ref{lemma:4.1-SH}, we have 
$$
\begin{aligned}
0 = & \langle \nabla_{V}X, TX\rangle = \sum_{i = 1,\mu_i\neq \lambda}^{m+n-2}\langle \nabla_{V}X, e_i\rangle\langle TX, e_i\rangle + \langle \nabla_{V}X, X\rangle\langle TX, X\rangle + \langle \nabla_{V}X, \bar V\rangle\langle TX, \bar V\rangle \\
= & \sum_{i = 1,\mu_i\neq \lambda}^{m+n-2}\langle \nabla_{V}X, e_i\rangle\langle TX, e_i\rangle = \sum_{i = 1,\mu_i\neq \lambda}^{m+n-2} \frac{\lambda\mu_i}{\lambda - \mu_i}\langle TX, e_i\rangle^2. 
\end{aligned}
$$
We have completed the proof of Lemma \ref{lemma:car-seperate-SH}. 
\end{proof}
	
Now, we have the following classification result.
	
\begin{theorem}\label{thm:SH-1}
Let $M$ be a connected oriented hypersurface of $\mathbb S^m\times \H^n$ ($m,n\geq2$) with constant principal curvatures and constant product angle function $C$. Then, up to isometries of $\mathbb S^m\times \H^n$, one of the following cases occurs: 
\begin{itemize}
\item[(1)] $M$ is an open part of $\Sigma\times \mathbb{H}^n$, where $\Sigma$ is a hypersurface of $\mathbb S^m$ with constant principal curvatures; or
\item[(2)] $M$ is an open part of $\mathbb S^m\times \tilde{\Sigma}$, where $\tilde{\Sigma}$ is a hypersurface of $\mathbb{H}^n$ with constant principal curvatures. 
\end{itemize}
\end{theorem}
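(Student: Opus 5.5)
The plan is to show that the case $C\neq\pm1$ cannot occur; the case $C^2\equiv1$ is then settled by Lemma \ref{lemma:3.5-SH} and Lemma \ref{lemma:3.2aas1}.

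\textbf{Step 1: setup.} Assume $C$ is a constant in $(-1,1)$. By Corollary \ref{coro:2.2-SH}, $V$ is principal with $AV=0$, and $V^\perp$ is $A$-invariant. By Lemma \ref{lemma:4.1-SH}(1), every $\lambda\in\sigma(A|_{V^\perp})$ is nonzero and satisfies
$$T|_{V_\lambda}\text{ has compression }c(\lambda)\,\mathrm{id},\qquad c(\lambda)=C-\frac{1-C^2}{2\lambda^2},$$
meaning $\langle TX,Y\rangle=c(\lambda)\langle X,Y\rangle$ for $X,Y\in V_\lambda$. Since $T|_{V^\perp}$ is an orthogonal involution, $|c(\lambda)|\le1$. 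If $c(\lambda)=\pm1$, then $TV_\lambda=V_\lambda$. Otherwise $TX$ has a nonzero component orthogonal to $V_\lambda$ for every unit $X\in V_\lambda$.

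\textbf{Step 2: each eigenspace is $T$-invariant.} Choose $\lambda\in\sigma(A|_{V^\perp})$ of smallest absolute value, and apply Lemma \ref{lemma:car-seperate-SH}. Write
$$\frac{\lambda\mu_i}{\lambda-\mu_i}=\frac{1}{1/\mu_i-1/\lambda}.$$
This coefficient can change sign, so a naive sign argument fails. Instead I would order the eigenvalues by $1/\mu$ and take $\lambda$ with $1/\lambda$ extremal, i.e.\ the largest value of $1/\mu$. Then $1/\mu_i-1/\lambda<0$ for every $\mu_i\neq\lambda$, so all coefficients have the same sign. The identity in Lemma \ref{lemma:car-seperate-SH} then forces $\langle TX,e_i\rangle=0$ for all $e_i\notin V_\lambda$, hence $TV_\lambda=V_\lambda$. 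Since $T$ is a symmetric involution, it then also preserves the orthogonal complement of $V_\lambda$ in $V^\perp$. I would remove $V_\lambda$ and induct: at the next step, pick the eigenvalue with the next largest $1/\mu$. In its sum the terms involving $V_\lambda$ vanish because $TV_\lambda=V_\lambda$, and the remaining coefficients are again all of one sign. Hence every $V_\mu$ is $T$-invariant, and on it $T=\pm\mathrm{id}$, so $c(\mu)=\pm1$.

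\textbf{Step 3: contradiction.} Step 2 gives $C-\frac{1-C^2}{2\mu^2}=\pm1$ for every $\mu$. The value $+1$ would require $\frac{1-C^2}{2\mu^2}=C-1<0$, which is impossible. So $C-\frac{1-C^2}{2\mu^2}=-1$, i.e.\ $\mu^2=\frac{1-C}{2}$, and $T=-\mathrm{id}$ on all of $V^\perp$. Thus every vector of $V^\perp$ lies in the $\mathbb H^n$ factor direction of $P$. Computing the trace of $P$ in the frame $\{N,\bar V\}\cup V^\perp$: on $\mathrm{span}\{N,\bar V\}$ the trace is $C+(-C)=0$, and on $V^\perp$ it is $-(m+n-2)$. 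But $\mathrm{Tr}P=m-n$, so $m-n=-(m+n-2)$, i.e.\ $m=1$. This contradicts $m\ge2$. Therefore $C=\pm1$, and the classification follows from Lemma \ref{lemma:3.5-SH} combined with Lemma \ref{lemma:3.2aas1}.

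\textbf{Main obstacle.} The delicate point is Step 2: the induction has to be arranged so that at each stage all relevant coefficients have a definite sign. Ordering the eigenvalues by $1/\mu$ is the key device for this. Once every eigenspace is $T$-invariant, the trace argument in Step 3 is routine.
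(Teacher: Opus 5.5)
Your proposal is correct and follows the paper's proof. Both use Lemma \ref{lemma:car-seperate-SH} with an eigenvalue chosen so that all coefficients $\frac{\lambda\mu_i}{\lambda-\mu_i}$ share one sign, which inductively gives $TV_\mu=V_\mu$ for every $\mu$; Lemma \ref{lemma:4.1-SH}(1) then excludes $T=+\mathrm{id}$, and the trace of $P$ forces $m=1$.

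One correction: your concern about choosing the eigenvalue of smallest $|\lambda|$ is unfounded. If $|\lambda|\le|\mu_i|$ and $\mu_i\neq\lambda$, then $\frac{1}{1/\mu_i-1/\lambda}$ has the sign of $-\lambda$ for every $i$. That is exactly the choice the paper makes, so your ordering by $1/\mu$ is an equivalent alternative rather than a needed fix.
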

	
\begin{proof}
The case $C^2=1$ follows from Lemma \ref{lemma:3.5-SH} and Lemma \ref{lemma:3.2aas1}. Assume henceforth that $|C|<1$.
		
If there is only one principal curvature $\lambda\in\sigma(A|_{V^\perp})$, then we have $V^\perp=V_\lambda$ and $TV_\lambda=V_\lambda$. Since $T^2={\rm id}$, the eigenvalues of $T$ on $V^\perp$ are $\pm1$.  Moreover if there is a unit vector field  $X\in V_\lambda$ such that $TX=X$, then by using Lemma \ref{lemma:4.1-SH} (1), we have $1=\langle TX, X\rangle=C-\frac{1-C^2}{2\lambda^2}$. It follows that 
$-\frac{1}{2\lambda^2}=\frac{1}{1+C}>0$, which is a contradiction. Thus, we have that 
$TX=-X$ for any $X\in V_\lambda$. Now, we have $\rm{Tr}(T|_{V^\perp})=-(n+m-2)$. 
On the other hand, we also have $\rm{Tr}(T|_{V^\perp})=m-n$, which implies that 
$m=1$. It contradicts $m\geq2$. 
		
If there are at least two distinct constant principal curvatures in $\sigma(A|_{V^\perp})$, then we choose $\lambda\in\sigma(A|_{V^\perp})$ with minimum absolute value. 
For every $\mu_i\neq\lambda$, the coefficient 
$\mu_i/(\lambda-\mu_i)$ is strictly negative. Hence \eqref{eqn:car-seperate-2-SH} gives
$\langle TX,e_i\rangle=0$ for all $\mu_i\neq\lambda$ and $e_i\in V_{\mu_i}$. It follows that 
$T(V_\lambda)=V_\lambda$ and $T(V^\perp\ominus V_\lambda)=(V^\perp\ominus V_\lambda)$. Then we choose $\tilde\lambda\in(\sigma(A|_{V^\perp})\setminus\{\lambda\})$ with minimum absolute value, applying the same argument shows that $TV_{\tilde\lambda}=V_{\tilde\lambda}$. Repeat this process, and eventually we know that 
every eigenspace in $V^\perp$ is $T$-invariant. Since $T^2={\rm id}$, then the eigenvalues of $T$  on each eigenspace of $A$ are $\pm1$. 
		
If there is a unit principal vector field  $X\in V^\perp$ such that $TX=X$, then by using Lemma \ref{lemma:4.1-SH} (1), we have $1=\langle TX, X\rangle=C-\frac{1-C^2}{2\lambda^2}$. It follows that $-\frac{1}{2\lambda^2}=\frac{1}{1+C}>0$, which is a contradiction. Thus, we have that $TX=-X$ for any principal vector field $X\in V^\perp$. Then we have $\rm{Tr}(T|_{V^\perp})=-(n+m-2)$. 
On the other hand, we also have $\rm{Tr}(T|_{V^\perp})=m-n$, which also implies that 
$m=1$. It contradicts $m\geq2$. 
		
We have completed the proof of Theorem \ref{thm:SH-1}. 
\end{proof}
	
In the following, we show that the isoparametric property implies the condition of constant principal curvatures. 
	
\begin{lemma}\label{lemma:Iso-CPC}
Let $M$ be a connected, isoparametric hypersurface in $\mathbb S^m\times \H^n$ ($m,n\geq2$ and $m+n\geq5$). Then $M$ has constant principal curvatures.
\end{lemma}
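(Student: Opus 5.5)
We first reduce to the case of constant product angle function. For $\mathbb S^m\times\H^n$ we will use the analogue, due to de Lima and Pipoli \cite{DP}, of the result quoted in the introduction: a connected isoparametric hypersurface of $\mathbb S^m\times\H^n$ with $m,n\geq2$ and $m+n\geq5$ has constant $C$. This is exactly where the dimension hypothesis $m+n\geq5$ enters. If $C^2\equiv1$, Lemma \ref{lemma:3.5-SH} applies. Then $M$ is an open part of $\Sigma\times\H^n$ or of $\mathbb S^m\times\tilde\Sigma$. Isoparametricity of $M$ is equivalent to isoparametricity of the factor hypersurface, because parallel hypersurfaces split as products. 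By Lemma \ref{lemma:3.2aas1}, $M$ then has constant principal curvatures. So we may assume $C$ is constant with $|C|<1$. We will show that this case is impossible, or at least that it forces constant principal curvatures; Theorem \ref{thm:SH-1} then excludes it anyway.

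For $|C|<1$ constant we imitate Proposition \ref{pro:7.1} and Theorem \ref{thm:7.1}. By Corollary \ref{coro:2.2-SH}, $V$ is principal with $AV=0$. Along a normal geodesic $\gamma$, $\bar\nabla P=0$ gives that $\langle P\dot\gamma,\dot\gamma\rangle=C$ is constant. The Jacobi operator $\bar R(\cdot,\dot\gamma)\dot\gamma$ is parallel along $\gamma$ and acts on $\dot\gamma^\perp$ as follows. On the parallel transport of $V$ it has a fixed eigenvalue. On $\dot\gamma^\perp\ominus V$ it is
\[
\bar R_{\dot\gamma}X=\tfrac12\big(\langle P\dot\gamma,\dot\gamma\rangle X+PX\big)\quad\text{(using $\langle X,P\dot\gamma\rangle=0$ there)},
\]
so it acts as $\frac12(C+T)$, with eigenvalues $\frac{1+C}{2}$ and $\frac{C-1}{2}$ on the $\pm1$ eigenspaces of $T$.

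The difficulty is that $A|_{V^\perp}$ need not commute with $T$, so the Jacobi equation $D''+\bar R_\gamma D=0$ cannot be diagonalised simultaneously. We handle this by writing the solution explicitly. Let $R_0$ denote the constant-coefficient operator $\frac12(C+T)$ on $V^\perp$, which is parallel along $\gamma$. Then
\[
D(r)=\cos\big(r\sqrt{R_0}\big)-\frac{\sin(r\sqrt{R_0})}{\sqrt{R_0}}\,A ,
\]
interpreted via the functional calculus for the two eigenvalues. Here $\cos$ becomes $\cosh$ on the negative eigenspace, and the quotient is an entire function of $R_0$. In the $V$-direction the block is explicit. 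The shape operator of the parallel hypersurface is $A_r=-D'(r)D(r)^{-1}$.

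Isoparametricity says that
\[
H(r)=\operatorname{Tr}A_r=-\frac{d}{dr}\log\det D(r)
\]
is constant on $M$ for each small $r$. Integrating in $r$, $\det D(r)$ is then independent of $x$ up to the factor $\det D(0)=1$. We expand $\det D(r)$ in powers of $r$. Its coefficients are polynomials in $C$ and in the invariants
\[
\operatorname{Tr}\big(A^{k_1}T^{\epsilon_1}A^{k_2}T^{\epsilon_2}\cdots\big)\quad\text{restricted to }V^\perp .
\]
The main obstacle is to extract from the constancy of these coefficients that the eigenvalues of $A$ themselves are constant. The expansion mixes words in $A$ and $T$, so Newton's identities do not apply directly as they did in Theorem \ref{thm:7.1}.

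We would first try to avoid this entirely, as follows. Lemma \ref{lemma:4.1-SH} (1) holds pointwise even when the principal curvatures are nonconstant, since it uses only Codazzi, $AV=0$ and Lemma \ref{lemma:2.1-SH}; we must recheck that the term $(\nabla_VA)X$ contributes only $V(\lambda)\langle X,Y\rangle$, which is harmless once $\lambda$ is distinct. This gives $\lambda\neq0$ and
\[
\langle TX,X\rangle=C-\frac{1-C^2}{2\lambda^2}<C
\]
for every unit eigenvector $X$ of $A|_{V^\perp}$, on the open dense set where the multiplicities are locally constant. Summing over an eigenbasis gives
\[
m-n=\operatorname{Tr}(T|_{V^\perp})=\sum_i\Big(C-\frac{1-C^2}{2\lambda_i^2}\Big).
\]
The plan is to combine this trace identity with the $r^2$ and $r^4$ coefficients of $\det D(r)$. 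These are $\operatorname{Tr}A$, $\operatorname{Tr}(AR_0)$, $\operatorname{Tr}A^2$ and so on. We aim to show that all $\lambda_i$ are constant, in which case Theorem \ref{thm:SH-1} shows that $|C|<1$ cannot occur. If this fails, the fallback is to consider the focal distance. By the displayed formula for $D$, the solution degenerates exactly when $\lambda$ hits the values determined by $R_0$, and these values are $x$-independent because $\det D$ is. Continuity and connectedness would then show that each eigenvalue of $A$ takes only finitely many values, hence is constant, as at the end of the proof of Theorem \ref{thm:7.1}.
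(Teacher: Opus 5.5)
Your reduction to constant $C$, your treatment of $C^2\equiv1$, and your Jacobi-field setup all agree with the paper. Your $D(r)=\cos(r\sqrt{R_0})-\frac{\sin(r\sqrt{R_0})}{\sqrt{R_0}}A$ is exactly its $\mathcal C(t)-\mathcal S(t)A_p|_{V^\perp}$, and $\det D(r)$ is indeed independent of the point. The gap is the step you yourself call the main obstacle: passing from this to constancy of the spectrum of $A|_{V^\perp}$ when $A$ and $T$ do not commute. None of your three routes closes it.

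\emph{The workaround is false.} Lemma \ref{lemma:4.1-SH}(1) needs constant principal curvatures. Redo its computation with $X\in V_\lambda$ a unit vector and $\lambda$ nonconstant; Codazzi gives
\[
V(\lambda)+\lambda^2\bigl(C-\langle TX,X\rangle\bigr)=\tfrac12(1-C^2).
\]
So the $V(\lambda)$ term changes the diagonal entry $\langle TX,X\rangle$ itself; it is not harmless. None of $\lambda\neq0$, $\langle TX,X\rangle=C-\frac{1-C^2}{2\lambda^2}$, or your trace identity follows unless you already know $V(\lambda)=0$, which is essentially the conclusion.

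\emph{The coefficient plan is not carried out.} Combining with the $r^2,r^4$ coefficients gives no mechanism by which these mixed traces of $A$ and $T$ would pin down the eigenvalues of $A$.

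\emph{The focal fallback also fails.} Since $A$ does not commute with $R_0$, the zeros of $\det D(r)$ do not occur ``when $\lambda$ hits values determined by $R_0$''. Moreover, the zero set of one function of $r$ does not determine the spectrum of $A$.

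The missing idea is the two-parameter structure hidden in $\det D$. Let $\mathcal D_\pm$ be the $\pm1$-eigenspaces of $T$ on $V^\perp$, and set $C_1=\sqrt{(1+C)/2}$, $C_2=\sqrt{(1-C)/2}$. Then
\[
\det D(r)=\cos(C_1r)^{m-1}\cosh(C_2r)^{n-1}F_p\bigl(u(r),v(r)\bigr),\qquad F_p(u,v)=\det\bigl(I-\operatorname{diag}(uI_{m-1},vI_{n-1})A_p|_{V^\perp}\bigr),
\]
with $u(r)=\tan(C_1r)/C_1$ and $v(r)=\tanh(C_2r)/C_2$, and $F_p$ is a polynomial in two variables. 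Because the two factors have curvatures of opposite sign, the curve $r\mapsto(u(r),v(r))$ lies on no algebraic curve. Indeed, substituting $e^{2\mathrm iC_1r}$ and $e^{2C_2r}$ turns any polynomial relation into a vanishing combination of $e^{(2\mathrm iC_1k+2C_2l)r}$ with pairwise distinct exponents. Hence $F_p=F_{p'}$ as polynomials. Restricting to the diagonal $u=v=s$ shows that $\det(I-sA_p|_{V^\perp})$ is independent of $p$. Continuity of the ordered eigenvalues and connectedness then finish the proof.
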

	
\begin{proof}
According to Theorem 1 of \cite{DP}, $M$ has constant product angle function. When $C^2=1$,  from Lemma \ref{lemma:3.2aas1}, $M$ has constant principal curvatures. In the following, we suppose $|C|<1$ and denote
$$
\mathcal D_+=\{X\in V^\perp|\ TX=X\}, \quad \mathcal D_{-}=\{X\in V^\perp|\ TX=-X\}.
$$
Then $\dim\mathcal D_+=m-1$ and $\dim\mathcal D_-=n-1$. 
The normal Jacobi operator $\bar R_{N}$ on $V^\perp$ is
\begin{equation}\label{normal-Jac}
\bar R_NX=\bar R(X,N)N=\frac12(CX+TX).
\end{equation}
Therefore
\begin{equation}\label{normal-Jac-2}
\bar R_N|_{\mathcal D_+}=C_1^2I, \quad \bar R_N|_{\mathcal D_-}=-C_2^2I, \quad \bar R_NV=0,
\end{equation}
where
\begin{equation}\label{normal-Jac-3}
C_1=\sqrt{\frac{1+C}{2}},\quad C_2=\sqrt{\frac{1-C}{2}}. 
\end{equation}

The nearby parallel hypersurfaces $\Phi_{t}: M \rightarrow \mathbb{S}^{m} \times \mathbb{H}^{n}$ 
are given by
$$
\Phi_t(p)=\exp_p(tN_p),\quad p\in M,\quad t\in(-\epsilon, \epsilon),  
$$
where $\exp$ denotes the exponential map in $\mathbb{S}^{m} \times \mathbb{H}^{n}$. 
Then, by a standard and straightforward computation of Jacobi field theory 
(cf. \cite[Sec. 8.2]{BCO}), and according to the decomposition
$V^\perp=\mathcal D_+\oplus\mathcal D_-$, 
we obtain that $d(\Phi_t)_p|_{V^\perp}=\mathcal{P}_t\circ\big(\mathcal C(t)-\mathcal S(t)A_p|_{V^\perp}\big)$, 
where 
$$
\mathcal C(t)=
\begin{pmatrix}
	\cos(C_1 t)I_{m-1}&0\\
	0&\cosh(C_2 t)I_{n-1}
\end{pmatrix}, \quad\quad
\mathcal S(t)=
\begin{pmatrix}
	\tfrac{\sin(C_1 t)}{C_1}I_{m-1}&0\\
	0&\tfrac{\sinh(C_2 t)}{C_2}I_{n-1}
\end{pmatrix}, 
$$
and $\mathcal{P}_t:\,T_{p}(\mathbb S^m\times \H^n)\to T_{\Phi_t(p)}(\mathbb S^m\times \H^n)$ is the parallel transport along the geodesic.  
Here, $I_{m-1}$ and $I_{n-1}$ are the identity $((m-1)\times (m-1))$-matrix and identity
$((n-1)\times (n-1))$-matrix, respectively. 
Moreover, because $\bar R_N V=0$ and $A_pV=0$, it holds $d(\Phi_t)_pV=\mathcal{P}_t(V)$. 

Let $D_t(p):=d(\Phi_t)_p|_{V^\perp}$. Since the $V$-direction contributes the factor $1$ to the determinant, the mean curvature $H_t(p)$ of $\Phi_t(M)$ at $\Phi_t(p)$ is 
$$
H_t(p)=-\frac{d}{dt}\log(\det D_t(p))
$$
for all sufficiently small $t$. Because $M$ is isoparametric, every nearby parallel hypersurface has constant mean curvature. Hence, for each fixed sufficiently small $t$, the constant
mean curvature $H_{t}(p)$ is independent of $p$. Thus for any $p, p'\in M$,
$$
\frac{d}{dt}\log(\det D_t(p))=\frac{d}{dt}\log(\det D_t(p')). 
$$
At $t=0$, we have $D_0(p)=D_0(p')=\mathrm{id}$, hence $\log(\det D_0(p))=0=\log(\det D_0(p'))$. 
Integrating from $0$ to $t$ yields
\begin{equation}\label{identity-Jac}
\det D_t(p)=\det D_t(p'), \quad t\in(-\epsilon, \epsilon). 
\end{equation}
		
From $D_t(p)=\mathcal{P}_t\circ\big(\mathcal C(t)-\mathcal S(t)A_p|_{V^\perp}\big)$, 
we get 
$$
\det D_t(p)=\det (\mathcal C(t)) \det\bigl(I-\mathcal C(t)^{-1}\mathcal S(t)A_p|_{V^\perp}\bigr). 
$$
For sufficiently small $t$, the matrix $\mathcal C(t)$ is invertible, and
$$
\mathcal C(t)^{-1}\mathcal S(t)={\rm diag}\bigl(u(t)I_{m-1},v(t)I_{n-1}\bigr),
$$
where
$$
u(t)=\frac{\tan(C_1t)}{C_1},\qquad
v(t)=\frac{\tanh(C_2t)}{C_2}.
$$
Therefore
$$
\det D_t(p)=\cos(C_1t)^{m-1}\cosh(C_2t)^{n-1}F_p(u(t),v(t)),
$$
where
$$
F_p(u,v)=\det\Bigl(I-{\rm diag}(uI_{m-1},vI_{n-1})A_p|_{V^\perp}\Bigr).
$$
Clearly $F_p(u,v)$ is a polynomial in the two variables $u,v$. Since the factor
$$
\cos(C_1t)^{m-1}\cosh(C_2t)^{n-1}
$$
is nonzero for sufficiently small $t$, the equality $\det D_t(p)=\det D_t(p')$ implies
\begin{equation}\label{identity-Jac11}
F_p(u(t),v(t))=F_{p'}(u(t),v(t)),\quad t\in(-\epsilon, \epsilon).
\end{equation}
		
We now use the following elementary algebraic fact. 
		
\textbf{Claim.} 
If a polynomial $G\in\mathbb C[u,v]$ satisfies 
$G(u(t),v(t))=0$ on a nontrivial interval, then $G(u,v)=0$.
		
\noindent\textit{Proof of the claim.}
Suppose to the contrary that $G\neq0$, and write $G(u,v)=\sum_{r=0}^{R}\sum_{s=0}^{S}p_{rs}u^rv^s$. 
Set $x(t)=e^{2\mathrm i C_1t}$, $y(t)=e^{2C_2t}$. Then 
$$
u(t)=\frac{1}{\mathrm i C_1}\frac{x(t)-1}{x(t)+1},\qquad
v(t)=\frac{1}{C_2}\frac{y(t)-1}{y(t)+1}. 
$$
Choose the interval so small that $x(t)+1\neq0$ and $y(t)+1\neq0$. Clearing
denominators in the identity $G(u(t),v(t))=0$ gives $\tilde H(x(t),y(t))=0$, 
where
$$
\tilde H(x,y)=G(u,v)(x+1)^R(y+1)^S=\sum_{r=0}^{R}\sum_{s=0}^{S}
p_{rs}(\mathrm i C_1)^{-r}C_2^{-s}
(x-1)^r(x+1)^{R-r}
(y-1)^s(y+1)^{S-s}.
$$
Here $\tilde H(x,y)$ is a  nonzero polynomial in $x,y$. 
Now write $\tilde H(x,y)=\sum_{r=0}^{R}\sum_{s=0}^{S}h_{rs}x^r y^s$. Then 
$$
0=\tilde H(x(t),y(t))=\sum_{r,s}h_{rs}e^{(2\mathrm i C_1r+2C_2s)t}
$$ 
on a nontrivial interval. The exponents $2\mathrm i C_1r+2C_2s$ are pairwise distinct: if
$$
2\mathrm i C_1r+2C_2s=2\mathrm i C_1r'+2C_2s',
$$
then comparing real and imaginary parts gives $s=s'$ and $r=r'$. By the 
linear independence of exponential functions with pairwise distinct
exponents, all coefficients $h_{rs}$ must vanish, which contradicts
$\tilde H(x,y)\neq0$. Hence $G(u,v)=0$, proving the claim. 
\hfill$\square$
		
Applying the claim to the polynomial $G(u,v)=F_p(u,v)-F_{p'}(u,v)$, and using  \eqref{identity-Jac11}, we obtain 
$$
F_p(u,v)=F_{p'}(u,v)
$$
identically as polynomials in $u,v$. In particular, setting $u=v=s$, we get 
$$
\det(I-sA_p|_{V^\perp})	=\det(I-sA_{p'}|_{V^\perp}) 
$$
for every $s\in\mathbb R$. Thus the characteristic polynomial of
$A_p|_{V^\perp}$ is independent of $p$. 
		
Finally, since $AV=0$ and $AV^\perp=V^\perp$,  the characteristic polynomial of the shape operator $A_p$ is independent of $p$. The eigenvalues are continuous functions on $M$. Each such function takes values in the finite set of roots of the fixed characteristic polynomial. Since $M$ is connected, each of these continuous functions must be constant. Therefore all principal
curvatures of $M$ are constant. 
\end{proof}

Recall that de Lima and Pipoli recently proved that every connected isoparametric hypersurface of $\mathbb S^m\times \H^n$ has constant product angle function (see Theorem 1 of \cite{DP}). 
Then, as a direct application of Theorem \ref{thm:SH-1} and Lemma \ref{lemma:Iso-CPC}, we have the following result.
	
\begin{corollary}
Let $M$ be a connected oriented isoparametric hypersurface of $\mathbb S^m\times \H^n$ ($m,n\geq2$ and $m+n\geq5$). Then, up to isometries of $\mathbb S^m\times \H^n$, one of the following cases occurs: 
\begin{itemize}
\item[(1)] $M$ is an open part of $\Sigma\times \H^n$, where $\Sigma$ is a hypersurface of $\mathbb S^m$ with constant principal curvatures; or
\item[(2)] $M$ is an open part of $\mathbb S^m\times \tilde{\Sigma}$, where $\tilde{\Sigma}$ is a hypersurface of $\H^n$ with constant principal curvatures. 
\end{itemize} 
\end{corollary}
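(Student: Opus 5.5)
The statement to prove is the final Corollary: a connected oriented isoparametric hypersurface of $\mathbb S^m\times\H^n$ with $m,n\geq2$ and $m+n\geq5$ is one of the two product types. The plan is to chain three results already in the paper.

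\textbf{Step 1 (constant product angle).} Invoke Theorem 1 of \cite{DP}: every connected isoparametric hypersurface of $\mathbb S^m\times\H^n$ in this dimension range has constant product angle function $C$. This is the only external input.

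\textbf{Step 2 (constant principal curvatures).} Apply Lemma \ref{lemma:Iso-CPC}. The cases split as follows.
\begin{itemize}
\item If $C^2=1$, Lemma \ref{lemma:3.2aas1} gives constancy directly.
\item If $|C|<1$, use the Jacobi field computation. Isoparametricity forces $\det D_t(p)$ to be independent of $p$. The polynomial identity claim then upgrades this to equality of $F_p(u,v)$ as polynomials. Specializing $u=v$ shows that the characteristic polynomial of $A_p$ is constant.
\end{itemize}
Connectedness of $M$ and continuity of the ordered eigenvalues then give constant principal curvatures.

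\textbf{Step 3 (classification).} $M$ now has constant principal curvatures and constant product angle function, so Theorem \ref{thm:SH-1} applies. The case $|C|<1$ is excluded there by two ingredients:
\begin{itemize}
\item Lemma \ref{lemma:car-seperate-SH}, applied inductively from the principal curvature of smallest absolute value, shows that every eigenspace of $A$ on $V^\perp$ is $T$-invariant.
\item Lemma \ref{lemma:4.1-SH}(1) then forbids the eigenvalue $+1$ of $T$.
\end{itemize}
Together these force $\operatorname{Tr}(T|_{V^\perp})=-(m+n-2)$. This contradicts $\operatorname{Tr}(T|_{V^\perp})=m-n$, which would require $m=1$. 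Hence $C=\pm1$.

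Lemma \ref{lemma:3.5-SH} then makes $M$ an open part of $\Sigma\times\H^n$ or of $\mathbb S^m\times\tilde\Sigma$. Lemma \ref{lemma:3.2aas1} shows the factor $\Sigma$ or $\tilde\Sigma$ has constant principal curvatures. This yields cases (1) and (2) of the Corollary.

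\textbf{Expected difficulty.} All ingredients are already proved above, so the argument is mainly assembly. The only point needing care is Step 2, namely checking that the hypotheses on $(m,n)$ match those of \cite{DP} and of Lemma \ref{lemma:Iso-CPC}; the condition $m+n\geq5$ is there for this reason.
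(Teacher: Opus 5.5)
Your proposal is correct and follows the paper's own route: Theorem 1 of \cite{DP} gives constant $C$, Lemma \ref{lemma:Iso-CPC} upgrades isoparametricity to constant principal curvatures, and Theorem \ref{thm:SH-1} (via Lemmas \ref{lemma:3.5-SH} and \ref{lemma:3.2aas1}) yields the two product cases. One small precision, which the paper also leaves implicit: in the $C^2=1$ branch of Step 2 you need Lemma \ref{lemma:3.5-SH} first, to know that $M$ is a product, before Lemma \ref{lemma:3.2aas1} applies.
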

	
\begin{remark}
Recall that $\mathbb{S}^m\times \mathbb{H}^1$ and $\mathbb{S}^1\times \mathbb{H}^n$ are locally isometric to $\mathbb{S}^m\times \mathbb{R}$ and $\mathbb{H}^n\times \mathbb{R}$, respectively. Consequently, the classification of hypersurfaces with constant principal curvatures and constant product angle function in these two spaces follows from \cite{CS19}, and that of isoparametric hypersurfaces follows from \cite{dP24}. The isoparametric hypersurfaces in 
$\mathbb{S}^2\times \mathbb{H}^2$ were classified in \cite{GMY24-2}. 
\end{remark}

\vskip 0.2cm
\noindent\textbf{Data availability.}
Data sharing is not applicable to this article as no datasets were generated or analysed during the current study.
	
\noindent\textbf{Declarations}
	
\noindent\textbf{Conflict of interest.} On behalf of all authors, the corresponding author states that there is no conflict of interest.


\vskip 10mm

\begin{flushleft}
Haizhong Li\\
{\sc Department of Mathematical Sciences, Tsinghua University,\\
Beijing, 100084, P.R. China}\\
E-mail: lihz@tsinghua.edu.cn

\vskip 1mm

Renhao Tan\\
{\sc Department of Mathematical Sciences, Tsinghua University,\\
Beijing, 100084, P.R. China}\\
E-mail: trh23@mails.tsinghua.edu.cn

\vskip 1mm

Zeke Yao\\
{\sc School of Mathematical Sciences, South China Normal University,\\
Guangzhou 510631, P.R. China}\\
E-mail: yaozk.2021@tsinghua.org.cn

\end{flushleft}
\end{document}